\definecolor{nred}{rgb}{0.7,0,0}
\definecolor{nblue}{rgb}{0,0,0.7}
\def\@cite#1#2{\textcolor{nblue}{[#1\if@tempswa , #2\fi]}}
\renewcommand*{\@seccntformat}[1]{%
   \csname the#1\endcsname.\hspace{1mm}} 
\def\@settitle{\begin{center}%
  \baselineskip14\p@\relax
    \sc
 \Large\@title
 \end{center}%
}
\def\@setauthors{%
  \begingroup
  \def\thanks{\protect\thanks@warning}%
  \trivlist
  \centering\footnotesize \@topsep30\p@\relax
  \advance\@topsep by -\baselineskip
  \item\relax
  \author@andify\authors
  \def\\{\protect\linebreak}%
  \sc\large{\authors}%
  \ifx\@empty\contribs
  \else
    ,\penalty-3 \space \@setcontribs
    \@closetoccontribs
  \fi
  \endtrivlist
  \endgroup
}
\def\@settitle{\begin{center}%
  \baselineskip14\p@\relax
    \sc
 \Large\@title
 \end{center}%
}
\def\@setauthors{%
  \begingroup
  \def\thanks{\protect\thanks@warning}%
  \trivlist
  \centering\footnotesize \@topsep30\p@\relax
  \advance\@topsep by -\baselineskip
  \item\relax
  \author@andify\authors
  \def\\{\protect\linebreak}%
  \sc\large{\authors}%
  \ifx\@empty\contribs
  \else
    ,\penalty-3 \space \@setcontribs
    \@closetoccontribs
  \fi
  \endtrivlist
  \endgroup
}
\renewcommand*{\eqref}[1]{\textcolor{nblue}{(\ref{#1})}}
\numberwithin{equation}{section}
\newtheorem{theorem}{Theorem}[section]
\newtheorem{proposition}[theorem]{Proposition}
\newtheorem{lemma}[theorem]{Lemma}
\newtheorem{maintheorem}{Theorem}
\theoremstyle{definition}
\newtheorem{definition}[theorem]{Definition}
\theoremstyle{remark}
\newtheorem{remark}[theorem]{Remark}
\newtheorem{example}[theorem]{Example}
\newcommand*{\J}{\mathsf{J}_+(\mathbb{CP}(a_1,a_2))}
\newcommand*{\teiler}{c}
\newcommand*{\SU}{\mathrm{SU}(2)}
\newcommand*{\su}{\mathfrak{su}(2)}
\newcommand*{\U}{\mathrm{U}(1)}
\newcommand*{\varz}{z}
\newcommand*{\varw}{w}
\renewcommand*{\Re}{\mathrm{Re}}
\renewcommand*{\Im}{\mathrm{Im}}
\newcommand*{\ov}{\overline}	
\newcommand*{\R}{\mathbb{R}}
\newcommand*{\Z}{\mathbb{Z}}
\renewcommand*{\i}{\mathrm{i}}
\newcommand*{\C}{\mathbb{C}}
\newcommand*{\Co}{\mathbb{C}}
\newcommand*{\D}{\mathbb{D}}
\newcommand*{\Orb}{\mathcal{O}}
\newcommand*{\To}{\rightarrow}
\newcommand*{\MTo}{\mapsto}
\renewcommand*{\d}{\mathrm{d}}
\newcommand*{\area}{d\sigma_g}
\renewcommand*{\frame}{\mathscr{P}}
\newcommand*{\e}{\mathrm{e}}
\newcommand*{\un}{\underline}
\newcommand*{\dualcon}{Z}
\newcommand*{\con}{\zeta}
\newcommand*{\bihol}{\Xi}
\newcommand*{\linevert}{L}
\newcommand*{\linepath}{P}
\newcommand*{\esu}{\begin{pmatrix} \varz & -\ov\varw \\ \varw & \ov \varz \end{pmatrix}}
\newcommand*{\inc}{\hspace{3pt}\rule[0.5pt]{2mm}{0.5pt}\rule[0.5pt]{0.5pt}{4.5pt}\hspace{3pt}}
\subjclass[2010]{53B40, 53A20, 58C10}
\keywords{Finsler structures, constant curvature, Zoll metrics, spindle orbifolds, holomorphic curves}
\begin{document}
\author[C.~Lange]{Christian Lange}
\address{Christian Lange\newline\indent Mathematisches Institut der Universit\"at M\"unchen \newline\indent Theresienstr.~39, D-80333 Munich, Germany}
\email{lange@math.lmu.de, clange.math@gmail.com}
\author[T.~Mettler]{Thomas Mettler}
\address{Thomas Mettler\newline\indent Institut f\"ur Mathematik, Goethe-Universit\"at Frankfurt \newline\indent Robert-Mayer-Str.~10, D-60325 Frankfurt am Main, Germany}
\email{mettler@math.uni-frankfurt.de, mettler@math.ch}

\title[Veronese Embedding and Finsler 2-Spheres]{Deformations of the Veronese Embedding and Finsler 2-Spheres of Constant Curvature}

\date{13th March 2021}

\begin{abstract}
We establish a one-to-one correspondence between Finsler structures on the $2$-sphere with constant curvature $1$ and all geodesics closed on the one hand, and Weyl connections on certain spindle orbifolds whose symmetric Ricci curvature is positive definite and all of whose geodesics are closed on the other hand. As an application of our duality result, we show that suitable holomorphic deformations of the Veronese embedding $\mathbb{CP}(a_1,a_2)\To \mathbb{CP}(a_1,(a_1+a_2)/2,a_2)$ of weighted projective spaces provide examples of Finsler $2$-spheres of constant curvature whose geodesics are all closed. 
\end{abstract}

\maketitle

\section{Introduction}

\subsection{Background}
Riemannian metrics of constant curvature on closed surfaces are fully understood, a complete picture in the case of Finsler metrics is however still lacking. Akbar-Zadeh~\cite{MR1052466} proved a first key result by showing that on a closed surface a Finsler metric of constant negative curvature must be Riemannian, and locally Minkowskian in the case where the curvature vanishes identically (see also~\cite{MR1898191}). In the case of constant positive curvature a Finsler metric must still be Riemannian, provided it is reversible~\cite{MR2313331}, but the situation turns out to be much more flexible in the non-reversible case.

 Katok~\cite{MR0331425} gave the first examples (later analysed by Ziller~\cite{MR743032}) of non-re\-ver\-si\-ble Finsler metrics of constant positive curvature, though it was only realized later that Katok's examples actually have constant curvature. Meanwhile, Bryant~\cite{MR1403574} gave another construction of non-re\-ver\-si\-ble Finsler metrics of constant positive curvature on the $2$-sphere $S^2$ and in subsequent work~\cite{Bry} classified all Finsler metrics on $S^2$ having constant positive curvature and that are projectively flat. Bryant also observed that every Zoll metric on $S^2$ with positive Gauss curvature gives rise to a Finsler metric on $S^2$ with constant positive curvature~\cite{MR1898190}. Hence, already by the work of Zoll \cite{ZOLL} from the beginning of the 20th century, the moduli space of constant curvature Finsler metrics on $S^2$ is known to be infinite-dimensional. Its global structure is however not well understood.

\subsection{A duality result}
Recently in~\cite{MR4195750}, Bryant et.~al.~inter alia showed that a Finsler metric on $S^2$ with constant curvature $1$ either admits a Killing vector field, or has all of its geodesics closed. Moreover, in the first case all geodesics become closed, and even of the same length, after a suitable (invertible) Zermelo transformation. Hence, in this sense the assumption that all geodesics are closed is not a restriction. However, in the second case the geodesics can in general have different lengths, unlike the geodesics of the Finsler metrics that arise from Bryant's construction using Zoll metrics.

In this paper we generalise Bryant's observation about Zoll metrics to a one-to-one correspondence which covers all Finsler metrics on $S^2$ with constant curvature $1$ and all geodesics closed. The correspondence arises from the classical notion of duality for so-called path geometries.

An~\textit{oriented path geometry} on an oriented surface $M$ prescribes an oriented path $\gamma\subset M$ for every oriented direction in $TM$. This notion can be made precise by considering the bundle $\pi : \mathbb{S}(TM):=\left(TM\setminus\{0_M\}\right)/\R^+ \to M$ which comes equipped with a tautological co-orientable contact distribution $C$.
An oriented path geometry is a one-dimensional distribution $\linepath \to \mathbb{S}(TM)$ so that $\linepath$ together with the vertical distribution $\linevert=\ker \pi^{\prime}$ span $C$. 


The orientation of $M$ equips $P$ and $L$ with an orientation as well and following~\cite{Bry}, a $3$-manifold $N$ equipped with a pair of oriented one-di\-men\-sional distributions $(\linepath,\linevert)$ spanning a contact distribution is called an~\textit{oriented generalized path geometry}. In this setup the surface $M$ is replaced with the leaf space of the foliation $\mathcal{\linevert}$ defined by $\linevert$ and the leaf space of the foliation $\mathcal{\linepath}$ defined by $\linepath$ can be thought of as the space of oriented paths of the oriented generalized path geometry $(\linepath,\linevert)$. We may reverse the role of $\linepath$ and $\linevert$ and thus consider the~\textit{dual} $(-\linevert,-\linepath)$ of the oriented generalized path geometry $(\linepath,\linevert)$, where here the minus sign indicates reversing the orientation. 

The unit circle bundle $\Sigma \subset TM$ of a Finsler metric $F$ on an oriented surface $M$ 
naturally carries the structure of an oriented generalized path geometry $(\linepath,\linevert)$. 
In the case where all geodesics are closed, the dual of the path geometry arising from a Finsler metric on the $2$-sphere with constant positive curvature arises from a certain generalization of a Besse $2$-orbifold \cite{La16} with positive curvature. Here a $2$-orbifold is called \emph{Besse} if all its geodesics are closed. Namely, using the recent result~\cite{MR4195750} by Bryant et al.~about such Finsler metrics (see~\cref{thm:BFIVMZ} below), we show that the space of oriented geodesics is a spindle-orbifold $\Orb$ 
-- or equivalently, a~\textit{weighted projective line} -- which comes equipped with a positive Besse--Weyl structure. By this we mean an affine torsion-free connection $\nabla$ on $\Orb$ which preserves some conformal structure -- a so-called~\textit{Weyl connection} -- and which has the property that the image of every maximal geodesic of $\nabla$ is an immersed circle. Moreover, the symmetric part of the Ricci curvature of $\nabla$ is positive definite. Conversely, having such a positive Besse--Weyl structure on a spindle orbifold, we show that the dual path geometry yields a Finsler metric on $S^2$ with constant positive curvature all of whose geodesics are closed. More precisely, we prove the following duality result which generalizes~\cite[Theorem 3]{MR1898190} and \cite[Proposition 6, Corollary 2]{MR2313331} by Bryant:
\begin{maintheorem}\label{thm:duality}
There is a one-to-one correspondence between Finsler structures on $S^2$ with constant Finsler--Gauss curvature $1$ and all geodesics closed on the one hand, and positive Besse--Weyl structures on spindle orbifolds $S^2(a_1,a_2)$ with $\teiler:=\gcd(a_1,a_2)\in \{1,2\}$, $a_1\geqslant a_2$, $2|(a_1+a_2)$ and $\teiler^3|a_1a_2$ on the other hand. More precisely, 
\begin{enumerate}
\item such a Finsler metric with shortest closed geodesic of length $2\pi\ell \in (\pi,2\pi]$, $\ell=p/q\in (\frac{1}{2},1]$, $\gcd(p,q)=1$, gives rise to a positive Besse--Weyl structure on $S^2(a_1,a_2)$ with $a_1=q$ and $a_2=2p-q$, and
\item a positive Besse--Weyl structure on such a $S^2(a_1,a_2)$ gives rise to such a Finsler metric on $S^2$ with shortest closed geodesic of length $2\pi\left(\frac{a_1+a_2}{2a_1}\right) \in (\pi,2\pi]$,
\end{enumerate}
and these assignments are inverse to each other. Moreover, two such Finsler metrics are isometric if and only if the corresponding Besse--Weyl structures coincide up to a diffeomorphism.
\end{maintheorem}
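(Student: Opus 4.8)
The plan is to realise the duality entirely on the unit circle bundle $\Sigma\subset TS^2$, where both sides of the correspondence are visible simultaneously as the two foliations of an oriented generalized path geometry $(\linepath,\linevert)$. First I would record the structure equations of a Finsler surface of constant curvature $1$: on $\Sigma$ there is a canonical coframing whose exterior derivatives encode curvature $1$, so that the flow of the geodesic vector field spanning $\linepath$ is, up to reparametrisation, a periodic $\R$-action. Invoking \cref{thm:BFIVMZ}, in the case where all geodesics are closed this makes $\linepath$ a Seifert fibration; its leaf space $\Orb$ is therefore a closed $2$-orbifold, and the topology of $\Sigma$ together with the contact condition forces $\Orb$ to be a spindle orbifold $S^2(a_1,a_2)$, equivalently a weighted projective line. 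The complementary datum---the vertical foliation $\linevert$ together with the conformal structure singled out by the coframing---then descends to a torsion-free conformal connection, i.e.\ a Weyl connection $\nabla$, on $\Orb$.

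Next I would check that $\nabla$ is a \emph{positive} Besse--Weyl structure. In the dual geometry $(-\linevert,-\linepath)$ the base surface is $\Orb$ and its paths are the projections of the $\linevert$-leaves under $\Sigma\To\Orb$; since $\linevert$ is the fibre foliation of $\Sigma\To S^2$, every such leaf is a circle, and its image is an immersed circle, giving the Besse property for the $\nabla$-geodesics. The positivity of the symmetric Ricci curvature is a direct transcription of the constant curvature $1$ structure equations, as the relevant curvature component is pushed forward intact. This produces the map from Finsler metrics to positive Besse--Weyl structures. For the converse I would reverse the construction: from a positive Besse--Weyl structure on a spindle $S^2(a_1,a_2)$ one forms the canonical oriented path geometry whose $\linepath$-leaves are the Weyl-geodesics, takes its dual $(-\linevert,-\linepath)$, and verifies that the resulting generalized path geometry is actually \emph{Finslerian}---that is, that the contact condition together with positivity guarantees the dual leaf space is $S^2$ and that the data integrate to a convex Finsler metric of constant curvature $1$. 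That the two assignments are mutually inverse is then the statement that the dual of the dual returns the original generalized path geometry, which is immediate from the definition of the dual. The isometry statement follows in the same vein: an isometry of Finsler metrics lifts to a contactomorphism of $\Sigma$ preserving $(\linepath,\linevert)$, hence descends to a diffeomorphism of $\Orb$ intertwining the Weyl connections, and conversely.

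The arithmetic of the parameters is where the real work lies, and I expect it to be the main obstacle. One must compute the Seifert invariants of the fibration $\linepath\To\Orb$ from the dynamics of the geodesic flow: the length $2\pi\ell$ of the shortest closed geodesic, with $\ell=p/q\in(\tfrac12,1]$ and $\gcd(p,q)=1$, fixes the two rotation numbers at the cone points and hence the orbifold weights. Tracing this through should yield $a_1=q$ and $a_2=2p-q$, and the admissible range of $\ell$ together with the Seifert structure on $\Sigma$ forces precisely the constraints $\teiler=\gcd(a_1,a_2)\in\{1,2\}$, $a_1\geqslant a_2$, $2\mid(a_1+a_2)$ and $\teiler^3\mid a_1a_2$. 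Inverting these relations gives the length $2\pi\left(\frac{a_1+a_2}{2a_1}\right)$ on the return trip, completing the bijection. The delicate point throughout is twofold: to show that no orbifolds other than these spindles occur, and that every pair $(a_1,a_2)$ satisfying the stated constraints is in fact realised by some positive Besse--Weyl structure, so that the correspondence is genuinely onto on both sides.
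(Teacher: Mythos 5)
Your overall strategy --- passing to the unit tangent bundle, reading off the Weyl structure from the dual foliation, and reversing the construction --- is the same as the paper's, but as written the proposal is a programme rather than a proof: the steps you flag as routine are precisely where the paper has to work. In the forward direction, the assertion that the conformal structure and connection ``descend'' to the leaf space $\Orb$ is not automatic. One must show that the semibasic form $I\eta+J\nu$ built from Cartan's invariants is invariant under the geodesic flow, which the paper deduces from the Bianchi identities $XI=J$ and $XJ=-I$ obtained by differentiating the structure equations; only then does one obtain a well-defined $1$-form $\theta$ on $\Orb$ with $\pi^*(\star_g\theta)=-(J\alpha+I\beta)$, and the identity $K_g-\delta_g\theta=1$ (hence positivity and the natural gauge) follows from $\d\chi=\nu\wedge\eta$.

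The more serious gap is in the converse. Starting from a positive Besse--Weyl structure you must first invoke Epstein's theorem to upgrade ``all leaves of the geodesic foliation are circles'' to a smooth, almost free $S^1$-action, and then --- this is the crux --- show that the leaf space of the dual foliation is a genuine smooth $2$-sphere rather than another spindle orbifold, and that the fibres of $\Sigma\to S^2$ map to strictly convex curves in each tangent plane. The paper achieves the first point by lifting everything to $S^3$, analysing the resulting Seifert fibrations, and using the involution $(x,v)\mapsto(x,-v)$: it descends to the geodesic leaf space $\Orb_g$ and must interchange its two singular points, forcing $k_1=k_2=1$, so that the lifted geodesic foliation is the Hopf fibration and the intermediate quotient is a smooth $S^2$. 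Convexity of the fibre images, and hence the fact that one actually obtains a Finsler structure, then comes from Bryant's Propositions 1 and 2. Neither step is supplied, or even identified, in your proposal, and the contact condition together with positivity alone does not give them. Finally, you explicitly defer the arithmetic $a_1=q$, $a_2=2p-q$ and the constraints on $\gcd(a_1,a_2)$; in the paper these come from the classification of Seifert fibrations of $\mathbb{RP}^3\cong L(2,1)$ (\cref{lem:seifert_on_rp3}) combined with the length data of \cref{thm:BFIVMZ}, so this is a concrete, necessary piece of the argument rather than an expected formality.
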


\subsection{Construction of examples} 

In~\cite{MR3144212}, it is shown that Weyl connections with prescribed (unparametrised) geodesics on an oriented surface $M$ are in one-to-one correspondence with certain holomorphic curves into the ``twistor space'' over $M$. In \cref{sec:examples} we make use of this observation to construct deformations of positive Besse--Weyl structures on the weighted projective line $\mathbb{CP}(a_1,a_2)$ in a fixed projective class, by deforming the~\textit{Veronese embedding} of $\mathbb{CP}(a_1,a_2)$ into the weighted projective plane with weights $(a_1,(a_1+a_2)/2,a_2)$.
Applying our duality result, we obtain a corresponding real two-dimensional family of non-isometric, rotationally symmetric Finsler structures on the $2$-sphere with constant positive curvature and all geodesics closed, but not of the same length. The length of the shortest closed geodesic of the resulting Finsler metric is unchanged for our family of deformations and so it is of different nature than the Zermelo deformation used by Katok in the construction of his examples~\cite{MR0331425}. Moreover, we expect that not all of these examples are of Riemannian origin in the following sense (cf.~\cref{rem:projectively_equi_deformations}).
%


The construction of rotationally symmetric Zoll metrics on $S^2$ can be generalized to give an infinite-dimensional family of rotationally symmetric Riemannian metrics on spindle orbifolds all of whose geodesics are closed \cite{MR496885,La16}. Since every Levi-Civita connection is a Weyl connection, we obtain an infinite-dimensional family of rotationally symmetric positive Riemannian Besse--Weyl structures.

Furthermore, in~\cite{MR1979367,MR2747436} LeBrun--Mason construct a Weyl connection $\nabla$ on the $2$-sphere $S^2$ for every totally real embedding of $\mathbb{RP}^2$ into $\mathbb{CP}^2$ which is sufficiently close to the standard real linear embedding. The Weyl connection has the property that all of its maximal geodesics are embedded circles and hence defines a Besse--Weyl structure. In addition, they show that every such Weyl connection on $S^2$ is part of a complex $5$-dimensional family of Weyl connections having the same unparametrised geodesics (see also~\cite{MR3384876}). In particular, the Weyl connections of LeBrun--Mason that arise from an embedding of $\mathbb{RP}^2$ that is sufficiently close to the standard embedding provide examples of positive Besse--Weyl structures. The corresponding dual Finsler metrics on $S^2$ will have geodesics that are all closed and of the same length. 

A complete local picture of the space of Finsler $2$-spheres of constant positive curvature and with all geodesics closed likely requires extending the work of LeBrun--Mason to the orbifold setting. Our results in \cref{sec:examples} lay the foundation for such an extension. We hope to be able to built upon it in future work. 

\subsection*{Acknowledgements} The authors would like to thank Vladimir Matveev for drawing their attention to the work of Bryant and its possible extension as well as for helpful correspondence. CL was partially supported by the DFG funded project SFB/TRR 191. A part of the research for this article was carried out while TM was visiting FIM at ETH Z\"urich and Mathematisches Institut der Universit\"at zu K\"oln. TM thanks FIM for its hospitality and the DFG for travel support via the project SFB/TRR 191. TM was partially funded by the priority programme SPP 2026 ``Geometry at Infinity'' of DFG.

\section{Preliminaries}
\setcounter{maintheorem}{0}
\subsection{Background on orbifolds}\label{sub:background_orbifolds}
For a detailed account on different perspectives on orbifold we refer the reader to e.g. \cite{MR2359514,MR1744486,La18,MR0095520}. Here we only quickly recall some basic notions which are relevant for our purpose. An \emph{$n$-dimensional Riemannian orbifold} $\Orb^n$ can be defined as a length space such that for each point $x \in \Orb$ there exists a neighbourhood $U$ of $x$ in $\Orb$, an $n$-dimensional Riemannian manifold $M$ and a finite group $\Gamma$ acting by isometries on $M$ such that $U$ and $M/\Gamma$ are isometric \cite{La18}. In this case we call $M$ a manifold chart for $\Orb$. Every Riemannian orbifold admits a canonical smooth structure, i.e., roughly speaking, there exist equivariant, smooth transition maps between manifolds charts. Conversely, every smooth orbifold is ``metrizable'' in the above sense. For a point $x$ on an orbifold the linearised isotropy group of a preimage of $x$ in a manifold chart is uniquely determined up to conjugation. Its conjugacy class is denoted as $\Gamma_x$ and is called the \emph{local group} of $\Orb$ at $x$. A point $x\in \Orb$ is called \emph{regular} if its local group is trivial and otherwise \emph{singular}.

For example, the metric quotient $\Orb_{a}$, $a=(a_1,a_2)$, of the unit sphere $S^{3}\subset \C^{2}$ by the isometric action of $S^1\subset \C$ defined by
\[
				z (z_1,z_2)=(z^{a_1}z_1,z^{a_2}z_2)
\]
for co-prime numbers $a_1\geqslant a_2$ is a Riemannian orbifold which is topologically a $2$-sphere, but which metrically has two isolated singular points with cyclic local groups of order $a_1$ and $a_2$. We denote the underlying smooth orbifold as $S^2(a_1,a_2)$ and refer to it as a $(a_1,a_2)$-\emph{spindle orbifold}. The quotient map $\pi$ from $S^3$ to $\Orb_{a}$ is an example of an \emph{orbifold (Riemannian) submersion}, in the sense that for every point $z$ in $S^3$, there is a neighbourhood $V$ of $z$ such that $M/\Gamma=U=\pi(V)$ is a chart, and $\pi|_V$ factors as $V\stackrel{\tilde{\pi}}{\longrightarrow} M{\longrightarrow} M/\Gamma=U$, where $\tilde{\pi}$ is a standard submersion. The anti-Hopf action of $S^1$ on $S^3$ defined by $z (z_1,z_2)=(zz_1,z^{-1}z_2)$ commutes with the above $S^1$-action and induces an isometric $S^1$-action on $\Orb_{a}$. Let $\Gamma_k$ be a cyclic subgroup of the anti-Hopf $S^1$-action. The quotient $S^3/\Gamma_k$ is a \emph{lens space} of type $L(k,1)$. By moding out such $\Gamma_k$-actions on $\Orb_{a}$ we obtain spindle orbifolds $S^2(a_1,a_2)$ with arbitrary $a_1$ and $a_2$ as quotients. These spaces fit in the following commutative diagram
\[
	\begin{xy}
		\xymatrix
		{
			  S^3  \ar[r] \ar[d] & \Orb_{a} \cong S^2(a_1,a_2) \ar[d]  \\
		  S^3/\Gamma_k \cong L(k,1)  \ar[r] & \Orb_{a}/\Gamma_k \cong S^2(k'a_1,k'a_2)  
		}
	\end{xy}
\]
for some $k'|k$. Here the left vertical map is an example of a (Riemannian) orbifold covering $p:\Orb \To \Orb'$, i.e. each point $x\in \Orb'$ has a neighbourhood $U$ isomorphic to some $M/\Gamma$ for which each connected component $U_i$ of $p^{-1}(U)$ is isomorphic to $M/\Gamma_i$ for some subgroup $\Gamma_i<\Gamma$ such that the isomorphisms are compatible with the natural projections $M/\Gamma_i \To M/\Gamma$ (see \cite{La18} for a metric definition). Thurston has shown that the theory of orbifold coverings works analogously to the theory of ordinary coverings \cite{Thurston}. In particular, there exist universal coverings and one can define the \emph{orbifold fundamental group} $\pi_1^{orb}(\Orb)$ of a connected orbifold $\Orb$ as the deck transformation group of the universal covering. For instance, the orbifold fundamental group of $S^2(a_1,a_2)$ is a cyclic group of order $\gcd(a_1,a_2)$. Moreover, the number $k'$ in the diagram is determined in \cite[Theorem~4.10]{GL16} to be
\begin{equation}\label{eq:seifert_orbifold_relation}
k'=\frac{k}{\gcd(k,a_1-a_2)}.
\end{equation}

More generally, in his fundamental monograph \cite{MR1555366} Seifert studies foliations of $3$-manifolds by circles that are locally orbits of effective circle actions without fixed points (for a modern account see e.g. \cite{MR705527}). The orbit space of such a \emph{Seifert fibration} naturally carries the structure of a $2$-orbifold with isolated singularities. If both the $3$-manifold and the orbit space are orientable, then the Seifert fibration can globally be described as a decomposition into orbits of an effective circle action without fixed points (see e.g. \cite[Section~2.4]{La16} and the references stated therein). In particular, in \cite[Chapter~11]{MR1555366} Seifert shows that any Seifert fibration of the $3$-sphere is given by the orbit decomposition of a weighted Hopf action. The classification of Seifert fibrations of lens spaces, their quotients and their behaviour under coverings is described in detail in \cite{GL16}. Let us record the following special statement which will be needed later.
\begin{lemma}\label{lem:seifert_on_rp3}Let $\mathcal{F}$ be a Seifert fibration of $\mathbb{RP}^3\cong L(2,1)$ with orientable quotient orbifold. Then the quotient orbifold is a $S^2(a_1,a_2)$ spindle orbifold, $a_1\geqslant a_2$, with $2|(a_1+a_2)$, $\teiler:=\gcd(a_1,a_2) \in \{1,2\}$ and $\teiler^3|a_1a_2$.
\end{lemma}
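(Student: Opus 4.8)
**The plan is to classify Seifert fibrations of $\mathbb{RP}^3$ directly and then read off the constraints on the quotient spindle orbifold.** The key structural fact, due to Seifert and recalled in the excerpt, is that every Seifert fibration of $S^3$ with orientable quotient is the orbit decomposition of a weighted Hopf action $z\cdot(z_1,z_2)=(z^{a_1}z_1,z^{a_2}z_2)$ with $\gcd(a_1,a_2)=1$, whose quotient is the spindle $\Orb_{(a_1,a_2)}\cong S^2(a_1,a_2)$. Since $\mathbb{RP}^3\cong S^3/\Gamma_2$ for the order-$2$ subgroup $\Gamma_2$ of the anti-Hopf circle, every Seifert fibration $\mathcal{F}$ of $\mathbb{RP}^3$ lifts to a Seifert fibration of $S^3$ commuting with this $\Gamma_2$-action, so it arises from a weighted Hopf action with coprime weights, and $\mathcal{F}$ is the induced fibration downstairs. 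Thus the quotient orbifold of $\mathcal{F}$ is precisely the $\Gamma_2$-quotient $\Orb_{(a_1,a_2)}/\Gamma_2$ appearing in the commutative diagram of the excerpt.

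\medskip

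First I would fit this into that diagram with $k=2$: write the quotient orbifold as $S^2(a_1',a_2')$ with $a_i'=k'\,a_i$, where by the cited formula~\eqref{eq:seifert_orbifold_relation} one has $k'=k/\gcd(k,a_1-a_2)=2/\gcd(2,a_1-a_2)$. Because the upstairs weights $(a_1,a_2)$ are coprime, there are exactly two cases. If $a_1-a_2$ is \emph{even} (equivalently $a_1,a_2$ both odd, since they are coprime), then $\gcd(2,a_1-a_2)=2$, so $k'=1$ and the quotient orbifold is just $S^2(a_1,a_2)$ itself with coprime, both-odd weights; here $\teiler=\gcd(a_1,a_2)=1$. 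If $a_1-a_2$ is \emph{odd} (one of $a_1,a_2$ even, the other odd), then $\gcd(2,a_1-a_2)=1$, so $k'=2$ and the quotient is $S^2(2a_1,2a_2)$; here $\teiler=\gcd(2a_1,2a_2)=2\gcd(a_1,a_2)=2$.

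\medskip

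It then remains to verify the three arithmetic conditions in each case. For the parity condition $2\mid(a_1'+a_2')$: in the first case $a_1'+a_2'=a_1+a_2$ is a sum of two odd numbers, hence even; in the second case $a_1'+a_2'=2(a_1+a_2)$ is visibly even. For the cubic divisibility $\teiler^3\mid a_1'a_2'$: in the first case $\teiler=1$ and the condition is vacuous; in the second case $\teiler=2$, so $\teiler^3=8$, while $a_1'a_2'=4a_1a_2$, and since one of the coprime weights $a_1,a_2$ is even, $a_1a_2$ is even, whence $8\mid 4a_1a_2$. The condition $\teiler\in\{1,2\}$ is exactly the dichotomy just established. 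Finally, one may arrange $a_1'\geqslant a_2'$ by ordering the original coprime weights as $a_1\geqslant a_2$, which the weighted Hopf presentation permits.

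\medskip

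The main obstacle I anticipate is not the arithmetic but the \textbf{lifting step}: one must justify carefully that an arbitrary Seifert fibration of $\mathbb{RP}^3$ pulls back, under the double cover $S^3\to\mathbb{RP}^3$, to a genuine Seifert fibration of $S^3$ to which Seifert's classification applies, and that the deck involution preserves this fibration so that the quotient orbifold is indeed computed by the diagram with $k=2$. This requires invoking the covering theory for Seifert fibrations of lens spaces established in~\cite{GL16}, together with the orientability hypothesis on the quotient to guarantee that the circle action descends as claimed. Once the lift and the value of $k'$ from~\eqref{eq:seifert_orbifold_relation} are in hand, the remainder is the elementary case analysis above.
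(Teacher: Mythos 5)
Your proposal is correct and follows essentially the same route as the paper: both arguments reduce to the covering Seifert fibration of $S^3$ (a weighted Hopf fibration with coprime weights, via the classification results of~\cite{MR1555366} and~\cite{GL16}), apply the formula~\eqref{eq:seifert_orbifold_relation} with $k=2$ to relate the upstairs and downstairs weights, and then run the parity case analysis on the coprime weights to verify $2\mid(a_1+a_2)$, $\gcd(a_1,a_2)\in\{1,2\}$ and the cubic divisibility. The only cosmetic difference is that the paper first notes, via the homotopy sequence, that the quotient must be a spindle orbifold before lifting, whereas you obtain this as a byproduct of the lifting step that you correctly flag as the point requiring the covering theory of~\cite{GL16}.
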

\begin{proof} Since $\mathbb{RP}^3$ and the quotient surface are orientable, the Seifert fibration is induced by an effective circle action without fixed points. It follows from the homotopy sequence, that the orbifold fundamental group of the quotient is either trivial or $\Z_2$ \cite[Lemma~3.2]{MR705527}. In particular, the quotient has to be a spindle orbifold (see e.g. \cite[Chapter~3]{MR705527} or \cite[Chapter~10]{MR1555366}). Moreover, such a Seifert fibration is covered by a Seifert fibration of $S^3$ \cite[Theorem~5.1]{GL16} with quotient $S^2(a^0_1,a^0_2)$ for co-prime $a^0_1$ and $a^0_2$ with $a_i=aa^0_i$, and with
\[
		a=\frac{2}{\gcd(2,a^0_1+a^0_2)}=\frac{2}{\gcd(2,a^0_1-a^0_2)}
\]
by \cite[Theorem~4.10]{GL16}. This implies $2|(a_1+a_2)$, $\teiler:=\gcd(a_1,a_2) \in \{1,2\}$ and $\teiler^3|a_1a_2$ as claimed.
\end{proof}

Usually notions that make sense for manifolds can also be defined for orbifolds. The general philosophy is to either define them in manifold charts and demand them to be invariant under the action of the local groups (and transitions between charts as in the manifold case) like in the case of a Riemannian metric, or to demand certain lifting conditions. For instance, a map between orbifolds is called \emph{smooth} if it locally lifts to smooth maps between manifolds charts. 
Let us also explicitly mention  that the tangent bundle of an orbifold can be defined by gluing together quotients of the tangent bundles of manifold charts by the actions of local groups \cite[Proposition~1.21]{MR2359514}. In particular, if the orbifold has only isolated singularities, then its unit tangent bundle (with respect to any Riemannian metric) is in fact a manifold. For instance, the unit tangent bundle of a $S^2(a_1,a_2)$ spindle orbifold is an $L(a_1+a_2,1)$ lens space \cite[Lemma~3.1]{La16}. General vector bundles on orbifolds can be similarly defined on the level of charts. We will only work with vector bundles on spindle orbifolds $S^2(a_1,a_2)$ which can be described as associated bundles $\SU\times_{S^1} V$ for some linear representation of $S^1$ on a vector space $V$.

In the sequel we liberally use orbifold notions which follow this general philosophy without further explanation, and refer to the literature for more details.

\subsection{Besse orbifolds}\label{sub:besse_orbifolds} The Riemannian spindle orbifolds $\Orb_a\cong S^2(a_1,a_2)$ constructed in the preceding section have the additional property that all their geo\-de\-sics are closed, i.e. any geodesic factors through a closed geodesic. Here an \emph{(orbifold) geodesic} on a Riemannian orbifold is a path that can locally be lifted to a geodesic in a manifold chart, and a \emph{closed geodesic} is a loop that is a geodesic on each subinterval. We call a Riemannian metric on an orbifold as well as a Riemannian orbifold \emph{Besse}, if all its geodesics are closed. The moduli space of (rotationally symmetric) Besse metrics on spindle orbifolds is infinite-dimensional \cite{MR496885,La16}. For more details on Besse orbifolds we refer to \cite{ALR,La16}.

\subsection{Finsler structures.} 
A Finsler metric on a manifold is -- roughly speaking -- a Banach norm on each tangent space varying smoothly from point to point. Instead of specifying the family of Banach norms, one can also specify the norm's unit vectors in each tangent space. Here we only consider oriented Finsler surfaces and use definitions for Finsler structures from~\cite{Bry}:

A~\textit{Finsler structure} on an oriented surface $M$ is a smooth hypersurface $\Sigma \subset TM$ for which the basepoint projection $\pi : \Sigma \to M$ is a surjective submersion which has the property that for each $p \in M$ the fibre $\Sigma_p=\pi^{-1}(p)=\Sigma \cap T_pM$ is a closed, strictly convex curve enclosing the origin $0\in T_pM.$ A smooth curve $\gamma : [a,b] \to M$ is said to be a $\Sigma$-curve if its velocity $\dot{\gamma}(t)$ lies in $\Sigma$ for every time $t \in [a,b]$. For every immersed curve $\gamma : [a,b] \to M$ there exists a unique orientation preserving diffeomorphism $\Phi : [0,\mathscr{L}] \to [a,b]$ such that $\phi:=\gamma \circ \Phi$ is a $\Sigma$-curve. The number $\mathscr{L} \in \R^{+}$ is the length of $\gamma$ and the curve $\dot{\phi} : [a,b] \to \Sigma$ is called the \textit{tangential lift} of $\gamma$. Note that in general the length may depend on the orientation of the curve. 

Cartan~\cite{cartan1930} has shown how to associate a coframing to a Finsler structure on an oriented surface $M$. For a modern reference for Cartan's construction the reader may consult~\cite{bryantprescribedfinslercurvature}. Let $\Sigma \subset TM$ be a Finsler structure. Then there exists a unique coframing $\frame=(\chi,\eta,\nu)$ of $\Sigma$ with dual vector fields $(X,H,V)$ which satisfies the structure equations
\begin{align}\label{eq:struceqfinsler}
\begin{split}
\d\chi=&-\eta\wedge\nu,\\
\d\eta=&-\nu\wedge(\chi-I\eta),\\
\d\nu=&-(K\chi-J\nu)\wedge\eta,
\end{split}
\end{align}
for some smooth functions $I,J,K :\Sigma \to \R$. Moreover the $\pi$-pullback of any positive volume form on $M$ is a positive multiple of $\chi\wedge\eta$ and the tangential lift of any $\Sigma$-curve $\gamma$ satisfies 
\[
\dot{\gamma}^*\eta=0 \quad \text{and} \quad \dot{\gamma}^*\chi=\d t. 
\] 
A $\Sigma$-curve $\gamma$ is a $\Sigma$-\textit{geodesic}, that is,~a critical point of the length functional, if and only if its tangential lift satisfies $\dot{\gamma}^*\nu=0$. The integral curves of $X$ therefore project to $\Sigma$-geodesics on $M$ and hence the flow of $X$ is called the \textit{geodesic flow} of $\Sigma$. 

For a \textit{Riemannian Finsler structure} the functions $I,J$ vanish identically, as a result of which $K$ is constant on the fibres of $\pi : \Sigma\to M$ and therefore the $\pi$-pullback of a function on $M$ which is the Gauss curvature $K_g$ of $g$. Since in the Riemannian case the function $K$ is simply the Gauss curvature, it is usually called the \textit{Finsler--Gauss curvature}. In general $K$ need not be constant on the fibres of $\pi : \Sigma \to M$.

Let $\Sigma \subset TM$ and $\hat{\Sigma}\subset T\hat{M}$ be two Finsler structures on oriented surfaces with coframings $\frame$ and $\hat{\frame}$. An orientation preserving diffeomorphism $\Phi : M \to \hat{M}$ with $\Phi^{\prime}(\Sigma)=\hat{\Sigma}$ is called a \textit{Finsler isometry}. It follows that for a Finsler isometry $(\Phi^{\prime}\vert_{\Sigma})^*\hat{\frame}=\frame$ and conversely any diffeomorphism $\Xi : \Sigma \to \hat{\Sigma}$ which pulls-back $\hat{\frame}$ to $\frame$ is of the form $\Xi=\Phi^{\prime}$ for some Finsler isometry $\Phi : M \to \hat{M}$.

Following~\cite[Def.~1]{Bry}, we use the following definition:
\begin{definition}
A coframing $(\chi,\eta,\nu)$ on a $3$-manifold $\Sigma$ satisfying the structure equations~\eqref{eq:struceqfinsler} for some functions $I,J$ and $K$ on $\Sigma$ will be called a~\textit{generalized Finsler structure}. 
\end{definition}

As in the case of a Finsler structure we denote the dual vector fields of $(\chi,\eta,\nu)$ by $(X,H,V)$. Note that a generalized Finsler structure naturally defines an oriented generalized path geometry by defining $P$ to be spanned by $X$ while calling positive multiples of $X$~\textit{positive} and by defining $L$ to be spanned by $V$ while calling positive multiples of $V$~\textit{positive}. 

\begin{example}\label{riem}
Let $(\Orb,g)$ be an oriented Riemannian $2$-orbifold. In particular, $\Orb$ has only isolated singularities. Then the \textit{unit tangent bundle} 
\[
S\Orb:=\left\{v \in T\Orb\, :\,\vert v \vert_g=1\right\}\subset T\Orb
\]
is a manifold, and like in the case of a smooth Finsler structure it can be equipped with a canonical coframing as well. In order to distinguish the Riemannian orbifold case from the smooth Finsler case, we will use the notation $(\alpha,\beta,\con)$ instead of $(\chi,\eta,\nu)$ for the coframing. The construction is as follows: A manifold chart $M/\Gamma$ of $\Orb$ gives rise to a manifold chart $SM/\Gamma$ of $S\Orb$. In such a chart the first two coframing forms are explicitly given by 
\[
\alpha_v(w):=g(\pi^{\prime}_v(w),v), \quad \beta_v(w):=g(\pi^{\prime}_v(w),iv), \quad w \in T_vSM. 
\]
Here $\pi : S\Orb \to \Orb$ denotes the basepoint projection and $i : TM \to TM$ the rotation of tangent vectors by $\pi/2$ in positive direction. Note that these expressions are invariant under the group action of $\Gamma$ and hence in fact define forms on $\Orb M$. The third coframe form $\con$ is the \textit{Levi-Civita connection form} of $g$ and we have the structure equations
\[
\d\alpha=-\beta\wedge \con, \qquad \d\beta=-\con\wedge\alpha,\qquad \d\con=-(K_g\circ \pi) \alpha\wedge\beta,
\]
where $K_g$ denotes the Gauss curvature of $g$. Moreover, note that $\pi^*\area=\alpha\wedge\beta$ where $\area$ denotes the area form of $\Orb$ with respect to $g$. Denoting the vector fields dual to $(\alpha,\beta,\con)$ by $(A,B,\dualcon)$ we observe that the flow of $\dualcon$ is $2\pi$-periodic. Finally, if $\Orb$ is a manifold, then the coframing $(\alpha,\beta,\con)$ agrees with Cartan's coframing $(\chi,\eta,\nu)$ on the Riemannian Finsler structure $\Sigma=S\Orb$.
\end{example}

\subsection{Weyl structures and connections}

A~\textit{Weyl connection} on an orbifold $\Orb$ is an affine torsion-free connection on $\Orb$ preserving some conformal structure $[g]$ on $\Orb$ in the sense that its parallel transport maps are angle preserving with respect to $[g]$. An affine torsion-free connection $\nabla$ is a Weyl connection with respect to the conformal structure $[g]$ on $\Orb$ if for some (and hence any) conformal metric $g \in [g]$ there exists a $1$-form $\theta \in \Omega^1(\Orb)$ such that 
\begin{equation}\label{eq:defpropweylcon}
\nabla g=2\theta\otimes g. 
\end{equation}
Conversely, it follows from Koszul's identity that for every pair $(g,\theta)$ consisting of a Riemannian metric $g$ and $1$-form $\theta$ on $\Orb$ the connection 
\begin{equation}\label{eq:exprweylcon}
{}^{(g,\theta)}\nabla_XY={}^g\nabla_XY+g(X,Y)\theta^{\sharp}-\theta(X)Y-\theta(Y)X, \quad X,Y \in \Gamma(T\Orb) 
\end{equation}
is the unique affine torsion-free connection satisfying~\eqref{eq:defpropweylcon}. Here ${}^g\nabla$ denotes the Levi-Civita connection of $g$ and $\theta^{\sharp}$ is the vector field dual to $\theta$ with respect to $g$. Notice that for $u \in C^{\infty}(\Orb)$ we have the formula
\[
{}^{\exp(2u)g}\nabla_XY={}^g\nabla_XY-g(X,Y)(\d u)^{\sharp}+\d u(X)Y+\d u(Y)X, \quad X,Y \in \Gamma(T\Orb).
\]
From which one easily computes the identity
\[
{}^{(\exp(2u)g,\theta+\d u)}\nabla={}^{(g,\theta)}\nabla.
\]
Consequently, we define a~\textit{Weyl structure} to be an equivalence class $[(g,\theta)]$ subject to the equivalence relation
\[
(\hat{g},\hat{\theta})\sim (g,\theta) \quad \iff \quad \hat{g}=\e^{2u}g\;\; \text{and} \;\; \hat{\theta}=\theta+\d u, \quad u \in C^{\infty}(\Orb). 
\]
Clearly, the mapping which assigns to a Weyl structure $[(g,\theta)]$ its Weyl connection ${}^{(g,\theta)}\nabla$ is a one-to-one correspondence between the set of Weyl structures -- and the set of Weyl connections on $\Orb$. 

The Ricci curvature of a Weyl connection ${}^{(g,\theta)}\nabla$ on $\Orb$ is
\[
\mathrm{Ric}\left({}^{(g,\theta)}\nabla\right)=\left(K_g-\delta_g\theta\right)g+2\d\theta
\]
where $\delta_g$ denotes the co-differential with respect to $g$. 

\begin{definition} We call a Weyl structure $[(g,\theta)]$~\textit{positive} if the symmetric part of the Ricci curvature of its associated Weyl connection is positive definite.
\end{definition}
In the case where $\Orb$ is oriented we may equivalently say the Weyl structure $[(g,\theta)]$ is positive if the $2$-form $(K_g-\delta_g\theta)\area$ -- which only depends on the orientation and given Weyl structure -- is an orientation compatible volume form on $\Orb$. 
Note that by the Gauss--Bonnet theorem \cite{MR0095520} simply connected spindle orbifolds are the only simply connected $2$-orbifolds carrying positive Weyl structures. 

We now obtain:
\begin{lemma}
Every positive Weyl structure contains a unique pair $(g,\theta)$ satisfying $K_g-\delta_g\theta=1$. 
\end{lemma}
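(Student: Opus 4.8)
The plan is to reduce the statement to solving a single pointwise equation for the conformal factor, so that no PDE or global analysis is needed. I would start from an arbitrary representative $(g,\theta)$ of the given Weyl structure and set $\kappa_g:=K_g-\delta_g\theta$. Since the symmetric part of the Ricci curvature equals $\kappa_g\,g$ and $g$ is positive definite, positivity of the Weyl structure is exactly the statement that $\kappa_g$ is a positive function on $\Orb$. The sought normalisation $\kappa_g=1$ therefore amounts to rescaling $\kappa_g$ to the constant $1$ by a conformal change, and the whole argument hinges on how $\kappa_g$ transforms within the equivalence class.

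The key step is the conformal transformation law. For $\hat g=\e^{2u}g$ and $\hat\theta=\theta+\d u$ with $u\in C^\infty(\Orb)$ I would combine the standard two-dimensional Gauss-curvature formula $K_{\hat g}=\e^{-2u}\bigl(K_g+\delta_g\d u\bigr)$ with the conformal covariance of the Hodge codifferential on $1$-forms, $\delta_{\hat g}=\e^{-2u}\delta_g$ (both identities hold in manifold charts and are invariant under the local groups, hence descend to $\Orb$). The two correction terms $\delta_g\d u$ cancel and one obtains
\[
K_{\hat g}-\delta_{\hat g}\hat\theta=\e^{-2u}\bigl(K_g-\delta_g\theta\bigr),
\]
that is $\kappa_{\hat g}=\e^{-2u}\kappa_g$. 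This is consistent with the already noted invariance of the density $(K_g-\delta_g\theta)\,\d\sigma_g$, since $\d\sigma_{\hat g}=\e^{2u}\,\d\sigma_g$.

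With the transformation law in hand, both existence and uniqueness are immediate. The condition $\kappa_{\hat g}=1$ reads $\e^{-2u}\kappa_g=1$, i.e. $u=\tfrac12\log\kappa_g$. Because $\kappa_g>0$ everywhere, this $u$ is a well-defined smooth orbifold function, so $(\hat g,\hat\theta)=(\e^{2u}g,\theta+\d u)$ is a representative with $K_{\hat g}-\delta_{\hat g}\hat\theta=1$, giving existence. For uniqueness, observe that two representatives of the same Weyl structure sharing the same metric must have the same $\theta$ (the relation forces the conformal factor to be $1$, hence $u\equiv 0$); thus a representative is pinned down by its metric $\e^{2v}g$, and $\kappa=1$ forces $\e^{-2v}\kappa_g=1$, i.e. $v=\tfrac12\log\kappa_g=u$. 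Hence the normalising pair is unique.

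I anticipate that the only delicate points are a matter of bookkeeping: fixing the sign conventions in the conformal change of $K_g$ and of $\delta_g$ so that the $\delta_g\d u$ terms genuinely cancel, and checking compatibility with the orbifold charts — which holds since $\kappa_g$ is a bona fide smooth function on $\Orb$ and the logarithm of a positive smooth function is again smooth. No compactness or Gauss--Bonnet input enters the normalisation itself, as the defining equation for $u$ is algebraic pointwise.
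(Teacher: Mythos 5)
Your proof is correct and follows essentially the same route as the paper: the same conformal identities $K_{\e^{2u}g}=\e^{-2u}(K_g+\delta_g\d u)$ and $\delta_{\e^{2u}g}=\e^{-2u}\delta_g$ yield the transformation law $\kappa_{\hat g}=\e^{-2u}\kappa_g$, and existence comes from the same choice $u=\tfrac12\log(K_g-\delta_g\theta)$. Your uniqueness argument, phrased via the transformation law pinning down the conformal factor, is just a reformulation of the paper's observation that the symmetric Ricci curvature $(K_g-\delta_g\theta)g$ is an invariant of the Weyl connection, so no substantive difference remains.
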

\begin{proof}
We have the following standard identity for the change of the Gauss cur\-va\-ture under conformal change
\[
K_{\e^{2u}g}=\e^{-2u}\left(K_g-\Delta_g u\right)
\]
where $\Delta_g=-(\d\delta_g+\delta_g\d)$ is the negative of the Laplace--de Rham operator. Also, we have the identity
\[
\delta_{e^{2u}g}=\e^{-2u}\delta_g
\]
for the co-differential acting on $1$-forms. 

If $[(g,\theta)]$ is a positive Weyl structure, we may take any representative $(g,\theta)$, define $u=\frac{1}{2}\ln(K_g-\delta_g\theta)$ and consider the representative $(\hat{g},\hat{\theta})=(\e^{2u}g,\theta+\d u)$. Then we have
\[
K_{\hat{g}}-\delta_{\hat{g}}\hat{\theta}=\frac{K_g+\delta_g\d u}{K_g-\delta_g\theta}-\frac{\delta_g(\theta+\d u)}{K_g-\delta_g\theta}=1.
\]
Suppose the two representative pairs $(g,\theta)$ and $(\hat{g},\hat{\theta})$ both satisfy $K_{\hat{g}}-\delta_{\hat{g}}\hat{\theta}=K_g-\delta_g\theta=1$. Since they define the same Weyl connection, the expression for the Ricci curvature implies that
$\hat{g}=(K_{\hat{g}}-\delta_{\hat{g}}\hat{\theta})\hat{g}=(K_g-\delta_g\theta)g=g$ and hence also $\hat{\theta}=\theta$, as claimed. 
\end{proof}
\begin{definition}
For a positive Weyl structure $[(g,\theta)]$ we call the unique representative pair $(g,\theta)$ satisfying $K_g-\delta_g\theta=1$ the~\textit{natural gauge} of $[(g,\theta)]$. 
\end{definition}
\begin{lemma}\label{lemma:posweyltofinsler}
Let $[(g,\theta)]$ be a positive Weyl structure on an orientable $2$-orbifold $\Orb$ with natural gauge $(g,\theta)$ and let $\pi : S\Orb \to \Orb$ denote the unit tangent bundle of $g$ equipped with its canonical coframing $(\alpha,\beta,\con)$. Then the coframing
$$
\chi:=\pi^*(\star_g\theta)-\con, \quad \eta:=-\beta, \quad \nu:=-\alpha
$$
defines a generalized Finsler structure of constant Finsler--Gauss curvature $K=1$ on $S\Orb$. 
\end{lemma}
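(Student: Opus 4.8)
The plan is to verify directly that $(\chi,\eta,\nu)$ satisfies the Finsler structure equations~\eqref{eq:struceqfinsler} for suitable functions $I,J$ and with $K\equiv 1$, by expressing everything through the Riemannian coframing $(\alpha,\beta,\con)$ whose structure equations are recorded in~\cref{riem}. First I would confirm that $(\chi,\eta,\nu)$ really is a coframing: writing the horizontal form $\pi^*(\star_g\theta)=f_1\alpha+f_2\beta$ with $f_1,f_2\in C^\infty(S\Orb)$, the transition matrix from $(\alpha,\beta,\con)$ to $(\chi,\eta,\nu)=(f_1\alpha+f_2\beta-\con,\,-\beta,\,-\alpha)$ has non-vanishing determinant (a routine expansion gives $1$), so the three forms remain linearly independent. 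The one analytic input I would isolate next is the pullback identity
\[
\d\bigl(\pi^*(\star_g\theta)\bigr)=\pi^*\bigl(\d\star_g\theta\bigr)=-(\delta_g\theta\circ\pi)\,\alpha\wedge\beta,
\]
which follows from the standard surface identity $\d\star_g\theta=-(\delta_g\theta)\,\area$ (i.e.\ $\delta_g=-\star_g\d\star_g$ on $1$-forms) together with $\pi^*\area=\alpha\wedge\beta$ from~\cref{riem}.

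With this in hand the first structure equation is immediate and is where the natural gauge enters decisively: using $\d\con=-(K_g\circ\pi)\,\alpha\wedge\beta$ I would compute
\[
\d\chi=\pi^*\bigl(\d\star_g\theta\bigr)-\d\con=\pi^*\bigl(K_g-\delta_g\theta\bigr)\,\alpha\wedge\beta=\alpha\wedge\beta=-\eta\wedge\nu,
\]
the penultimate equality being exactly the normalization $K_g-\delta_g\theta=1$ of the natural gauge. For the remaining two equations I would expand the right-hand sides of~\eqref{eq:struceqfinsler}. Comparing $\d\eta=-\d\beta=\con\wedge\alpha$ with $-\nu\wedge(\chi-I\eta)$, the cross term $\alpha\wedge\con$ cancels automatically thanks to the $-\con$ summand in $\chi$, and the residual $\alpha\wedge\beta$ identity fixes $I$ (a fibrewise-linear function built from $\theta$). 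Comparing $\d\nu=-\d\alpha=\beta\wedge\con$ with $-(K\chi-J\nu)\wedge\eta$ produces a $\con\wedge\beta$ term carrying the coefficient $-K$; matching it against the $-\con\wedge\beta$ in $\d\nu$ forces $K\equiv 1$, while the leftover $\alpha\wedge\beta$ terms determine $J$. Thus $K=1$ is not an extra hypothesis but is dictated by the structure equations once $\chi$ is defined with the $-\con$ term.

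The part demanding the most care is the sign bookkeeping: fixing the conventions for $\star_g$ and $\delta_g$ so that $\d\star_g\theta=-(\delta_g\theta)\,\area$, and keeping the $\alpha\wedge\con$ and $\con\wedge\beta$ cross terms straight in the last two equations. Conceptually, however, the argument rests on only two observations --- that the natural gauge condition $K_g-\delta_g\theta=1$ is precisely what closes the first structure equation, and that the $-\con$ summand in $\chi$ is precisely what forces $K\equiv 1$ in the third --- so once the pullback identity and the Riemannian structure equations of~\cref{riem} are recorded, the verification is essentially mechanical.
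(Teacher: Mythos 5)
Your proposal is correct and follows essentially the same route as the paper: a direct verification of the structure equations, using $\d(\pi^*(\star_g\theta))=-\pi^*\bigl((\delta_g\theta)\area\bigr)$ together with the natural gauge condition $K_g-\delta_g\theta=1$ for the first equation, and coefficient matching (with $\con=\pi^*(\star_g\theta)-\chi$) for the other two, which yields $K\equiv 1$ and identifies $I$ and $J$ as fibrewise functions built from $\theta$. The only cosmetic difference is that the paper records $I=-\theta$ and $J=\dualcon(\theta)$ explicitly via $\pi^*(\star_g\theta)=-\dualcon(\theta)\alpha+\theta\beta$, while you leave them implicit.
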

\begin{proof}
We compute that 
\[
\d\chi=\d\left(\pi^*(\star_g\theta)-\con\right)=\pi^*\left((K_g-\delta_g\theta)\area\right)=\alpha\wedge\beta=-\eta\wedge\nu 
\]
and
\[
\d\eta=-\d\beta=\con\wedge\alpha=(\chi-\pi^*(\star_g\theta))\wedge\nu=-\nu\wedge(\chi-\pi^*(\star_g\theta))
\]
Now observe that $\pi^*(\star_g\theta)=-\dualcon(\theta)\alpha+\theta\beta$ where on the right hand side we think of $\theta$ as a real-valued function on $S\Orb$. Since $\nu=-\alpha$, we thus have
\[
\d\eta=-\nu\wedge\left(\chi-I\eta\right),
\]
for $I=-\theta$, again interpreted as a function on $S\Orb$. Likewise, we obtain
\[
\d\nu=-\d\alpha=\beta\wedge\con=-(\chi-\pi^*(\star_g\theta))\wedge\eta=-\left(\chi-J\nu\right)\wedge\eta, 
\]
where $J=\dualcon(\theta)$. The claim follows.
\end{proof}
\begin{remark}
We remark that correspondingly we have a natural gauge $(g,\theta)$ for a negative Weyl structure, that is, $(g,\theta)$ satisfy $K_g-\delta_g\theta=-1$. On a closed oriented surface (necessarily of negative Euler characteristic) the associated flow generated by the vector field $A-\dualcon(\theta)\dualcon$ falls into the family of flows introduced in~\cite{MR3968880}. In particular, its dynamics is Anosov. 
\end{remark}

The geometric significance of the form $\chi$ in~\cref{lemma:posweyltofinsler} is described in the following statement. For a proof in the manifold case -- which carries over mutatis mutandis to the orbifold case -- the reader may consult~\cite[Lemma 3.1]{MR4109900}. 
\begin{lemma}
Let $(g,\theta)$ be a pair of a Riemannian metric and a $1$-form on an orientable $2$-orbifold $\Orb$ and let $\pi : S\Orb \to \Orb$ denote the unit tangent bundle of $g$ with canonical coframing $(\alpha,\beta,\con)$. Then the leaves of the foliation defined by $\{\beta,\con-\pi^*(\star_g\theta)\}^{\perp}$ project to $\Orb$ to become the (unparametrised) oriented geodesics of the Weyl connection defined by $[(g,\theta)]$. 
\end{lemma}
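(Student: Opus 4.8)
The plan is to exhibit an explicit vector field spanning the rank-one distribution $\{\beta,\con-\pi^*(\star_g\theta)\}^{\perp}$ and then to check directly that its integral curves project to pregeodesics of the Weyl connection ${}^{(g,\theta)}\nabla$. First I would reuse the identity $\pi^*(\star_g\theta)=-\dualcon(\theta)\alpha+\theta\beta$ already established in the proof of~\cref{lemma:posweyltofinsler}, where $\theta$ is read as the function $v\mapsto\theta_{\pi(v)}(v)$ on $S\Orb$. Expanding a general vector field in the dual frame $(A,B,\dualcon)$ as $aA+bB+c\dualcon$, the condition of annihilating $\beta$ forces $b=0$, while the condition of annihilating $\con-\pi^*(\star_g\theta)=\con+\dualcon(\theta)\alpha-\theta\beta$ forces $c=-\dualcon(\theta)a$. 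Hence the distribution is spanned by $W:=A-\dualcon(\theta)\dualcon$, the vector field already singled out in the remark following~\cref{lemma:posweyltofinsler}.

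Next I would compute the projection. For an integral curve $\Gamma$ of $W$ put $\gamma:=\pi\circ\Gamma$ and view $\Gamma$ itself as a unit vector field along $\gamma$. Since $\pi^{\prime}(A)=v$, $\pi^{\prime}(B)=iv$ and $\pi^{\prime}(\dualcon)=0$ at $v\in S\Orb$, the values $\alpha(W)=1$, $\beta(W)=0$ give $\dot\gamma=\Gamma$, so $\gamma$ is unit speed with tangential lift $\Gamma$. The defining property of the Levi-Civita connection form $\con$ is that ${}^g\nabla_{\dot\gamma}\Gamma=\con(\dot\Gamma)\,i\Gamma$; evaluating at $\dot\Gamma=W$, using $\con(W)=-\dualcon(\theta)$ together with the pointwise identity $\dualcon(\theta)_v=\theta_{\pi(v)}(iv)$ (the same frame computation $v=\cos\phi\,e_1+\sin\phi\,e_2$ that produces the formula for $\pi^*(\star_g\theta)$), and substituting $\Gamma=\dot\gamma$, I obtain ${}^g\nabla_{\dot\gamma}\dot\gamma=-\theta(i\dot\gamma)\,i\dot\gamma$.

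It then remains to feed this into the Weyl connection formula~\eqref{eq:exprweylcon}. With $X=Y=\dot\gamma$ and $g(\dot\gamma,\dot\gamma)=1$ one gets
\[
{}^{(g,\theta)}\nabla_{\dot\gamma}\dot\gamma={}^g\nabla_{\dot\gamma}\dot\gamma+\theta^{\sharp}-2\theta(\dot\gamma)\dot\gamma,
\]
and expanding $\theta^{\sharp}=\theta(\dot\gamma)\dot\gamma+\theta(i\dot\gamma)\,i\dot\gamma$ in the orthonormal frame $\{\dot\gamma,i\dot\gamma\}$, the $i\dot\gamma$-components cancel and one is left with ${}^{(g,\theta)}\nabla_{\dot\gamma}\dot\gamma=-\theta(\dot\gamma)\dot\gamma$. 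As this is proportional to $\dot\gamma$, the curve $\gamma$ is an unparametrised geodesic of ${}^{(g,\theta)}\nabla$; as a sanity check, $\theta=0$ recovers $W=A$ and the ordinary geodesic flow. For the converse I would argue that every oriented unparametrised geodesic arises this way: a geodesic is determined by a point together with an oriented direction, i.e.\ by a point of $S\Orb$, and through each such point passes exactly one (oriented) leaf of the foliation, so matching orientations yields the asserted identification. Since $S\Orb$ is a manifold and every formula is written invariantly in manifold charts, the argument descends verbatim to the orbifold setting.

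The main obstacle I anticipate is purely the bookkeeping of sign and orientation conventions — the precise meaning of $\con$ relative to the rotation $i$ and the vertical field $\dualcon$, and the orientation entering $\star_g$ — because a single sign error in ${}^g\nabla_{\dot\gamma}\Gamma=\con(\dot\Gamma)\,i\Gamma$ would destroy the crucial cancellation of the $i\dot\gamma$-terms, leaving a spurious residue $2\theta(i\dot\gamma)\,i\dot\gamma$ that is not tangent to $\gamma$. These conventions are, however, pinned down by the structure equations in~\cref{riem} and by the established formula for $\pi^*(\star_g\theta)$, so the cancellation is forced rather than accidental.
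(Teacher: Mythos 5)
Your proof is correct, and it is essentially the argument the paper outsources to its citation of \cite[Lemma 3.1]{arXiv:1804.04616}: identify the spanning vector field $A-\dualcon(\theta)\dualcon$ of the rank-one distribution, use $\con(\dot\Gamma)$ to compute ${}^g\nabla_{\dot\gamma}\dot\gamma=-\theta(i\dot\gamma)\,i\dot\gamma$, and feed this into \eqref{eq:exprweylcon} so that the normal components cancel, leaving ${}^{(g,\theta)}\nabla_{\dot\gamma}\dot\gamma=-\theta(\dot\gamma)\dot\gamma$. All the sign conventions you rely on ($\pi^*(\star_g\theta)=-\dualcon(\theta)\alpha+\theta\beta$, $\dualcon(\theta)_v=\theta(iv)$, and ${}^g\nabla_{\dot\gamma}\Gamma=\con(\dot\Gamma)\,i\Gamma$) are consistent with the structure equations of \cref{riem} and with the identities used in the proofs of \cref{lemma:posweyltofinsler} and Theorem~\ref{thm:duality}, so the cancellation goes through as you claim.
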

We conclude this section with a definition:
\begin{definition}
An affine torsion-free connection $\nabla$ on $\Orb$ is called~\textit{Besse} if the image of every maximal geodesic of $\nabla$ is an immersed circle. A Weyl structure whose Weyl connection is Besse will be called a~\textit{Besse--Weyl structure}.
\end{definition}
Note that the Levi-Civita connection of any (orientable) Besse orbifold $\Orb$ (see \cref{sub:besse_orbifolds}) gives rise to a Besse--Weyl structure on $S \Orb$.
 
\section{A Duality Theorem}

Let us cite the following result from \cite{MR4195750}:

\begin{theorem}[Bryant, Foulon, Ivanov, Matveev, Ziller]\label{thm:BFIVMZ} Let $\Sigma \subset TS^2$ be a Finsler structure on $S^2$ with constant Finsler--Gauss curvature $1$ and all geodesics closed. Then there exists a shortest closed geodesic of length $2\pi \ell \in (\pi,2\pi]$ and the following holds:
\begin{enumerate}
\item Either $\ell=1$ and all geodesics have the same length $2 \pi$,
\item or $\ell=p/q \in (\frac{1}{2},1)$ with $p,q \in \mathbb{N}$ and $\gcd(p,q)=1$, and in this case all unit-speed geodesics have a common period $2\pi p$. Furthermore, there exists at most two closed geodesics with length less than $2\pi p$. A second one exists only if $2p-q>1$, and its length is $2\pi p/(2p-q) \in (2\pi,2p\pi)$.
\end{enumerate}
\end{theorem}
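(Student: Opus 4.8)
\section*{Proof proposal}

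The plan is to lift everything to the unit tangent bundle $\Sigma\cong\mathbb{RP}^3$, equipped with Cartan's coframing $(\chi,\eta,\nu)$ normalised by $K\equiv 1$, and to study the geodesic flow $\phi_t$ generated by the dual vector field $X$. By hypothesis every orbit of $\phi_t$ is periodic. The key input from constant curvature is the behaviour of the linearised flow transverse to a closed geodesic: along a unit-speed geodesic the normal Jacobi equation reads $\ddot y + K y=\ddot y+y=0$, so that around a closed geodesic of length $L$ the linearised Poincar\'e return map is precisely rotation by the angle $L$. In particular all normalised Jacobi fields are $2\pi$-periodic, consecutive conjugate points are at parameter distance $\pi$, and the return maps are uniformly rotation-like. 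This ellipticity precludes the Sullivan phenomenon of period blow-up on the compact $\Sigma$ and yields a uniform bound on the primitive period $T:\Sigma\To\R^+$; by the Epstein--Wadsley theory of flows all of whose orbits are periodic, $\phi_t$ is then a reparametrisation of a locally free circle action, so the orbits form a Seifert fibration of $\mathbb{RP}^3$ and the space of oriented geodesics is a closed $2$-orbifold $\Orb$.

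Since the return map around a closed geodesic of length $L$ is rotation by $L$, the hypothesis that \emph{all} geodesics close forces every such rotation to be of finite order, hence $L\in 2\pi\mathbb{Q}$, with the rotation numbers furnishing the Seifert invariants. As $\mathbb{RP}^3\cong L(2,1)$ and $\Orb$ is orientable by construction, \cref{lem:seifert_on_rp3} identifies $\Orb$ with a spindle orbifold $S^2(a_1,a_2)$ satisfying $2\,|\,(a_1+a_2)$, $\teiler=\gcd(a_1,a_2)\in\{1,2\}$ and $\teiler^3\,|\,a_1a_2$; in particular $p:=(a_1+a_2)/2\in\mathbb{N}$. The regular fibres are the generic closed geodesics; they return to the identity after the common period $2\pi p$, which therefore is a common period of all unit-speed geodesics. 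The two exceptional fibres sit over the cone points of orders $a_1\geqslant a_2$ and are traversed $a_1$, respectively $a_2$, times within one common period, so their primitive lengths are $2\pi p/a_1$ and $2\pi p/a_2$; the shortest closed geodesic is the exceptional fibre of highest multiplicity $a_1$, of length $2\pi p/a_1$.

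It remains to read off the numerology. Reducing $\ell:=p/a_1=(a_1+a_2)/(2a_1)$ to lowest terms $\ell=p/q$ gives $a_1=q$ and $a_2=2p-q$, so that the second exceptional fibre has primitive length $2\pi p/(2p-q)$, and one checks that $2\pi p/(2p-q)\in(2\pi,2p\pi)$ holds exactly when $2p-q>1$. When $2p-q=1$ the corresponding cone point has order one, hence is a regular point, and there is no second short geodesic; this is case~(2). The remaining case of an honestly free circle action, in which all geodesics share the primitive period $2\pi$, is case~(1) with $\ell=1$. The range $\ell\in(\tfrac12,1]$ is then automatic from $a_1\geqslant a_2\geqslant 1$: the upper bound $\ell\leqslant 1$ is equivalent to $a_2\leqslant a_1$, while the strict lower bound $\ell>\tfrac12$ is equivalent to positivity of the Seifert multiplicity $a_2=2p-q\geqslant 1$.

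I expect the main obstacle to lie in the quantitative length bookkeeping of the second and third steps rather than in the soft topological framework. Concretely, one must rigorously fix the common period to be exactly $2\pi p$ and prove that the exceptional orbits are \emph{precisely} the closed geodesics shorter than the common period, with \emph{exactly} the stated lengths and none besides. This requires computing the Poincar\'e return map around each orbit from the $K\equiv 1$ Jacobi equation and identifying its rotation number with a Seifert invariant, a step complicated by the non-reversibility of the Finsler metric, so that forward and backward return maps differ and the round-sphere symmetry is broken. The subtle reduction of $(a_1+a_2)/2$ over $a_1$ to lowest terms in the case $\teiler=2$ must also be handled with care, whereas the application of \cref{lem:seifert_on_rp3} and of the Epstein--Wadsley theory delivers the qualitative picture essentially for free.
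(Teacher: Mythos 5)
First, a point of order: the paper does not prove this statement --- it is quoted verbatim from the reference [BFIMZ] (``Let us cite the following result from...''), so there is no internal proof to measure your attempt against. The closest internal comparison is the proof of \cref{thm:duality}, which runs the same machinery (Epstein's theorem, Seifert fibrations of lens spaces, \cref{lem:seifert_on_rp3}, and the bracket relations coming from the $K=1$ structure equations) in both directions. Your skeleton is consistent with that machinery: the computation $[X,H]=KV$, $[X,V]=-H$ does show that for $K\equiv 1$ the linearised flow rotates the contact plane in the $(H,V)$-frame at unit speed, so the linearised return map of a closed orbit of primitive length $L$ is rotation by $L$; the geodesic flow is the Reeb flow of $\chi$, so Wadsley (or, in dimension three, Epstein) gives a genuine periodic flow; and the quotient is an orientable Seifert quotient of $\mathbb{RP}^3$, to which \cref{lem:seifert_on_rp3} applies.

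The genuine gap is exactly the step you flag as ``the main obstacle'' and then do not carry out: the identification of the common period with $2\pi p$ for $p=(a_1+a_2)/2$. You \emph{define} $p:=(a_1+a_2)/2$ from the Seifert data and then \emph{assert} that the regular fibres have primitive length $2\pi p$; everything quantitative in the theorem (the exact lengths $2\pi p/q$ and $2\pi p/(2p-q)$ of the exceptional geodesics, and in particular the lower bound $\ell>\tfrac12$, i.e.\ that the shortest closed geodesic is longer than $\pi$) is downstream of this assertion. What the rotation-by-$L$ fact actually gives for free is only that the common period $P$ satisfies $P\in 2\pi\mathbb{Z}$ (since $\d\phi_P=\mathrm{id}$ forces rotation by $P$ to be trivial) together with coprimality of $P/2\pi$ with each $a_i$ (from the local model at an exceptional fibre). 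Pinning $P/2\pi$ down to $(a_1+a_2)/2$ requires an additional global input --- for instance integrating $\chi\wedge\d\chi$ over $\Sigma$ by Fubini along the fibres and comparing with the orbifold Gauss--Bonnet value $\int_{\Orb}\d\sigma=2\pi(1/a_1+1/a_2)$ of the quotient, or equivalently computing the Euler number of the Seifert fibration of $\mathbb{RP}^3$ and matching it against the holonomy angles $2\pi m/a_i$. Without this, nothing excludes, say, a common period $2\pi m$ with $m\ne(a_1+a_2)/2$ but still coprime to $a_1$ and $a_2$, and the claimed length spectrum and the bound $2\pi\ell>\pi$ do not follow. So the proposal is an accurate road map of the soft topological part but is incomplete precisely where the theorem's content lies; as written it does not constitute a proof.
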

In particular, if all geo\-de\-sics of a Finsler metric on $S^2$ are closed, then its geo\-de\-sic flow is periodic with period $2\pi p$ for some integer $p$. 

We now have our main duality result:
\begin{maintheorem}
There is a one-to-one correspondence between Finsler structures on $S^2$ with constant Finsler--Gauss curvature $1$ and all geodesics closed on the one hand, and positive Besse--Weyl structures on spindle orbifolds $S^2(a_1,a_2)$ with $\teiler:=\gcd(a_1,a_2)\in \{1,2\}$, $a_1\geqslant a_2$, $2|(a_1+a_2)$ and $\teiler^3|a_1a_2$ on the other hand. More precisely, 
\begin{enumerate}
\item such a Finsler metric with shortest closed geodesic of length $2\pi\ell \in (\pi,2\pi]$, $\ell=p/q\in (\frac{1}{2},1]$, $\gcd(p,q)=1$, gives rise to a positive Besse--Weyl structure on $S^2(a_1,a_2)$ with $a_1=q$ and $a_2=2p-q$, and
\item a positive Besse--Weyl structure on such a $S^2(a_1,a_2)$ gives rise to such a Finsler metric on $S^2$ with shortest closed geodesic of length $2\pi\left(\frac{a_1+a_2}{2a_1}\right) \in (\pi,2\pi]$,
\end{enumerate}
and these assignments are inverse to each other. Moreover, two such Finsler metrics are isometric if and only if the corresponding Besse--Weyl structures coincide up to a diffeomorphism. 
\end{maintheorem}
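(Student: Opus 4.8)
The plan is to build the correspondence on the classical duality $(\linepath,\linevert)\mapsto(-\linevert,-\linepath)$ for oriented generalized path geometries, realised on the contact $3$-manifolds underlying the two sides. On the Finsler side this $3$-manifold is the unit circle bundle $\Sigma\subset TS^2$, which is diffeomorphic to $\mathbb{RP}^3\cong L(2,1)$; on the Weyl side it is the unit tangent bundle $S\Orb\cong L(a_1+a_2,1)$. I would show that Cartan's coframing of the Finsler structure and the coframing produced by \cref{lemma:posweyltofinsler} from the Weyl structure are the canonical generalized Finsler structures of two mutually dual path geometries, so that the constant curvature condition on one side is exchanged for the positive Besse--Weyl condition on the other.

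For the direction from Finsler to Weyl, I would start from $\Sigma\subset TS^2$ with $K\equiv 1$ and all geodesics closed. By \cref{thm:BFIVMZ} the geodesic flow of the vector field $X$ dual to $\chi$ is periodic, so the foliation $\linepath=\langle X\rangle$ is a Seifert fibration of $\mathbb{RP}^3$ whose leaf space is the space $\Orb$ of oriented geodesics. \Cref{lem:seifert_on_rp3} then identifies $\Orb$ with a spindle orbifold $S^2(a_1,a_2)$ and produces exactly the arithmetic constraints $\teiler=\gcd(a_1,a_2)\in\{1,2\}$, $2\mid(a_1+a_2)$ and $\teiler^3\mid a_1a_2$; comparing the at most two exceptional fibres and their orders with the length data of \cref{thm:BFIVMZ} pins down $a_1=q$ and $a_2=2p-q$. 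Passing to the dual geometry makes $\Orb$ the new base and turns the vertical circles of $\Sigma\to S^2$ into the leaves defining the oriented geodesics of the dual; as these leaves are circles the dual geodesics are closed, so the dual is Besse. The technical heart of this direction is to run Cartan's normalisation on the dual path geometry and to check, using the structure equations \eqref{eq:struceqfinsler} with $K\equiv 1$, that the resulting invariants are again $K'\equiv 1$ together with the fibre-linear forms $I=-\theta$, $J=\dualcon(\theta)$ characteristic of \cref{lemma:posweyltofinsler}; this exhibits the dual as the generalized Finsler structure of a Weyl connection on $\Orb$, and the contact/orientation condition translates into positivity of its symmetric Ricci curvature.

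For the converse I would take a positive Besse--Weyl structure on an admissible $S^2(a_1,a_2)$, pass to its natural gauge $(g,\theta)$ and apply \cref{lemma:posweyltofinsler} to obtain a generalized Finsler structure with $K\equiv 1$ on $S\Orb\cong L(a_1+a_2,1)$, whose vertical leaves project to the Weyl geodesics. Since the structure is Besse these geodesics are immersed circles, and since $2\mid(a_1+a_2)$ the lens space $\mathbb{RP}^3=L(2,1)$ covers $S\Orb$; pulling the coframing back along this covering and forming the leaf space of the $V$-foliation realises the covering as a genuine unit tangent bundle $\mathbb{RP}^3\to S^2$ whose fibres are the lifted, closed Weyl geodesics. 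Positivity of the Weyl structure is exactly what forces each such fibre to immerse as a strictly convex curve, so that the generalized Finsler structure descends to an honest Finsler structure on $S^2$; its curvature is the constant $K=1$ carried over from \cref{lemma:posweyltofinsler}, and a holonomy computation along the exceptional fibre of order $a_1$ gives shortest closed geodesic length $2\pi\left(\frac{a_1+a_2}{2a_1}\right)$.

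Because both assignments are instances of the single involutive duality $(\linepath,\linevert)\mapsto(-\linevert,-\linepath)$, verifying that they are mutually inverse reduces to the parameter bookkeeping --- and indeed $a_1=q$, $a_2=2p-q$ gives $\tfrac{a_1+a_2}{2a_1}=\tfrac{p}{q}=\ell$ --- together with the observation that each construction lands in the claimed class. The isometry statement follows from the canonicity of Cartan's coframing: a Finsler isometry lifts to a coframe-preserving diffeomorphism of $\Sigma$, which commutes with the intrinsic duality and hence descends to a diffeomorphism of $\Orb$ preserving the induced Weyl structure, and conversely. I expect the main obstacle to be the global passage between a generalized Finsler structure on the $3$-manifold and an honest Finsler structure on the quotient surface: one must simultaneously control the topology of the leaf space (that it is $S^2$, coordinating the covering $\mathbb{RP}^3\to L(a_1+a_2,1)$ with the Seifert data of \cref{lem:seifert_on_rp3}), the closedness and embeddedness of the vertical leaves (from the Besse condition and \cref{thm:BFIVMZ}), and the strict convexity of the fibres (from positivity). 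It is the interplay of these three global conditions, rather than any single local computation, that carries the real weight of the theorem.
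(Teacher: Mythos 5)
Your outline follows the same architecture as the paper's proof (the path-geometry duality on the unit tangent bundle, \cref{thm:BFIVMZ}, \cref{lem:seifert_on_rp3}, and \cref{lemma:posweyltofinsler}), and the forward direction is an acceptable sketch: the computations you defer to ``Cartan's normalisation'' are exactly the Lie-derivative and Bianchi-identity calculations the paper carries out to descend the metric and to produce the $1$-form $\theta$ with $K_g-\delta_g\theta=1$. The genuine gap is in the converse direction, at precisely the point you flag but do not resolve: you assert that forming the leaf space of the lifted geodesic foliation on the cover $M\cong L(2,1)$ ``realises the covering as a genuine unit tangent bundle $\mathbb{RP}^3\to S^2$''. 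Nothing in what precedes forces this leaf space to be a smooth $2$-sphere. A priori, \cref{lem:seifert_on_rp3} only tells you it is some spindle orbifold $S^2(k'k_1,k'k_2)$, and if it had nontrivial singular points the construction would not yield a Finsler structure on the smooth $S^2$ at all. Positivity of the Weyl structure does not rule this out, and neither does the Besse condition by itself.

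The paper closes this gap with a separate argument: the tangent-reversing involution $i(x,v)=(x,-v)$ of $S\Orb$ preserves both foliations and descends to the geodesic leaf space $\Orb_g$, and a case analysis (lifting $i$ to $S^3$ and comparing its action on the orientations of invariant fibres with that of the deck group) shows $i$ cannot fix the singular points of $\Orb_g$, hence must interchange them. This forces $kk_1=kk_2$, so $k_1=k_2=1$, i.e.\ the lifted geodesic foliation on $S^3$ is the Hopf fibration, and then $k'=1$ gives $\bar\Orb_g\cong S^2$ smooth. Related omissions in your sketch depend on the same circle of ideas: the claim that the winding number of each fibre of $\Phi$ is $1$ (the paper gets this from a fundamental-group comparison of $M$ and $\mathbb{S}TS^2$, both $L(2,1)$), and the identity $a=1$ needed so that the covering $M\to S\Orb$ restricts to an $\left(a_1+a_2\right)/2$-fold covering of the vertical fibres, which is what produces the stated length $2\pi\left(\frac{a_1+a_2}{2a_1}\right)$ of the shortest closed geodesic. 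Without these the converse assignment is not established, so the proposal as written does not prove the theorem.
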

\begin{proof}
In case of $2\pi$-periodic geodesic flows the first statement is already contained in~\cite{MR2313331}. To prove the general statement let $\Sigma \subset TS^2$ be a $K=1$ Finsler structure with $2\pi p$-periodic geodesic flow $\phi : \Sigma \times \R \to \Sigma$, i.e. the flow factorizes through a smooth, almost free $S^1$-action $\phi : \Sigma \times S^1 \to \Sigma$. The Cartan coframe will be denoted by $(\chi,\eta,\nu)$ and the dual vector fields by $(X,H,V)$. Since $\Sigma\cong SO(3)$ is an $L(2,1)$ lens space, the quotient map $\lambda$ for the $S^1$-action is a smooth orbifold submersion onto a spindle orbifold $\mathcal{O}=S^2(a_1,a_2)$, with $a_1\geqslant a_2$, $2|(a_1+a_2)$, $\teiler:=\gcd(a_1,a_2) \in \{1,2\}$ and $\teiler^3|a_1a_2$ by~\cref{lem:seifert_on_rp3}. With~\cref{thm:BFIVMZ} we see that $a_1=q$ and $a_2=2p-q$.
Since $X\inc\eta=X\inc\nu=0$, the $1$-forms $\eta$ and $\nu$ are semibasic for the projection $\lambda$ and using the structure equations for the $K=1$ Finsler structure, we compute the Lie derivative
\[
\mathrm{L}_{X}\left(\eta\otimes\eta+\nu\otimes\nu\right)=\nu\otimes \eta+\eta\otimes \nu-\eta\otimes \nu-\nu\otimes \eta=0. 
\]
Likewise, we compute $\mathrm{L}_X\left(\nu\wedge\eta\right)=0$. Hence the symmetric $2$-tensor $\eta\otimes\eta+\nu\otimes\nu$ and the $2$-form $\nu\wedge\eta$ are invariant under $\phi$ and therefore there exists a unique Riemannian metric $g$ on $\Orb$ for which $\lambda^*g=\eta\otimes\eta+\nu\otimes\nu$ where $\lambda : \Sigma \to \Orb$ is the natural projection. We may orient $\Orb$ in such a way that the pullback of the area form $\area$ of $g$ satisfies $\lambda^*\area=\nu\wedge\eta$. The structure equations also imply that $\chi,\eta,\nu$ are invariant under $(\phi_{2\pi})^{\prime}$ (cf. \cite[p.~186]{Bry}). Therefore the map
\[
\aligned
\Phi : \Sigma &\to T\Orb\\
v & \mapsto -\lambda^{\prime}_v\left(V(v)\right)\\
\endaligned
\]
and the forms $\chi,\eta,\nu$ are invariant under the action of the cyclic subgroup $\Gamma<S^1$ of order $p$ on $\Sigma$. Hence $\Phi$ factors through a map $\bar \Phi: \Sigma/\Gamma \rightarrow T\Orb$, and $\chi,\eta,\nu$ descend to $\Sigma/\Gamma$ where they define a generalized Finsler structure. The composition of $\bar\Phi$ with the canonical projection onto the projective sphere bundle $\mathbb{S}T\Orb:=\left(T\Orb\setminus\{0\}\right)/\R^{+}$ will be denoted by $\tilde{\Phi}$. Note that $\tilde{\Phi}$ is an immersion, thus a local diffeomorphism and by compactness of $\Sigma/\Gamma$ and connectedness of $\mathbb{S}T\Orb$ a covering map. Since by \cite[Lemma~3.1]{La16} both $\Sigma/\Gamma$ and $\mathbb{S}T\Orb$ have fundamental group of order $2p$, it follows that $\tilde{\Phi}$ is a diffeomorphism. Therefore, $\bar\Phi$ is an embedding which sends $\Sigma/\Gamma$ to the total space of the unit tangent bundle  $\pi : S\Orb\to\Orb$ of $g$. Abusing notation, we also write $\chi,\eta,\nu \in \Omega^1(S\Orb)$ to denote the pushforward with respect to $\bar\Phi$ of the Cartan coframe on $\Sigma/\Gamma$ . Also we let $\alpha,\beta,\con \in \Omega^1(S\Orb)$ denote the canonical coframe of $S\Orb$ with respect to the orientation induced by $\area$. More precisely, the pullback of $g$ to $S\Orb$ is $\alpha\otimes\alpha+\beta\otimes \beta$ and $\con$ denotes the Levi-Civita connection form. By construction, the map $\Phi$ sends lifts of $\Sigma$ geodesics onto the fibres of the projection $\pi : S\Orb \to \Orb$. Moreover, for $v\in \Sigma$ the projection $(\pi \circ\Phi)^*V(v)$ to $T_{(\pi\circ \Phi)(v)}\Orb$, i.e. the horizontal component of $\Phi^*V(v)$, is parallel to $\Phi(v)$ and so the vertical vector field $V$ on $\Sigma$ is mapped into the contact distribution defined by the kernel of $\beta$. Therefore, we see that $\beta$ and $\eta$ are linearly dependent and that $\nu(\Phi^* V),\alpha(\Phi^* V)<0$. However, since both $(\alpha,\beta)$ and $(\nu,\eta)$ are oriented orthonormal coframes for $g$, it follows that $\beta=-\eta$ and $\alpha=-\nu$. The structure equations for the coframing $(\alpha,\beta,\con)$ imply
\[
0=\d\alpha+\beta\wedge\con=-\d\nu-\eta\wedge\con=(\chi-J\nu)\wedge\eta-\eta\wedge\con=-\eta\wedge\left(\chi-J\nu+\con\right)
\]
and
\[
0=\d\beta+\con\wedge\alpha=-\d\eta-\con\wedge\nu=\nu\wedge\left(\chi-I\eta\right)-\con\wedge\nu=-\left(\chi-I\eta+\con\right)\wedge\nu,
\]
where again we abuse notation by also writing $I$ and $J$ for the pushforward of the functions $I$ and $J$ with respect to $\bar{\Phi}$. It follows that the Levi-Civita connection form $\con$ of $g$ satisfies
\[
\con=I\eta+J\nu-\chi=-(J\alpha+I\beta)-\chi.
\]
Recall that $\pi : S\Orb \to \Orb$ denotes the basepoint projection. Comparing with \cref{lemma:posweyltofinsler} we want to argue that there exists a unique $1$-form $\theta$ on $\Orb$ so that $\pi^*(\star_g\theta)=-(J\alpha+I\beta)$. Since $J\alpha+I\beta$ is semibasic for the projection $\pi$, it is sufficient to show that $J\alpha+I\beta$ is invariant under the $\mathrm{SO}(2)$ right action generated by the vector field $\dualcon$, where $(A,B,\dualcon)$ denote the vector fields dual to $(\alpha,\beta,\con)$. Denoting by $(X,H,V)$ the vector fields dual to $(\chi,\eta,\nu)$ on $S\Orb$, the identities
\[
\begin{pmatrix}
\alpha\\
\beta\\
\con\\ 
\end{pmatrix}=\begin{pmatrix} -\nu \\ -\eta \\ I\eta+J\nu-\chi\end{pmatrix}
\]
imply $\dualcon=-X$. Now observe the Bianchi identity 
\[
0=\d^2\eta=\left(J\chi-\d I\right)\wedge\eta\wedge\nu
\]
so that $XI=J$. Likewise we obtain
\[
0=\d^2\nu=-\left(\d J+I\chi\right)\wedge\eta\wedge\nu
\]
so that $XJ=-I$. From this we compute
\[
\mathrm{L}_X\left(I\eta+J\nu\right)=J\eta+I\nu-I\nu-J\eta=0,
\]
so that $-(J\alpha+I\beta)=\pi^*(\star_g\theta)$ for some unique $1$-form $\theta$ on $\Orb$ as desired. We obtain a Weyl structure defined by the pair $(g,\theta)$. Since
\begin{align*}
\d(\pi^*(\star_g\theta)-\con)&=\d\left(-(J\alpha+I\beta)+(J\alpha+I\beta+\chi)\right)=\d\chi=\nu\wedge\eta\\
&=\pi^*\left((K_g-\delta_g\theta)\area\right)=\left(K_g-\delta_g\theta\right)\circ \pi\,\nu\wedge\eta,
\end{align*}
we see that $K_g-\delta_g\theta=1$. Therefore $(g,\theta)$ is the natural gauge for the positive Weyl structure $[(g,\theta)]$. Finally, by construction, the Weyl structure $[(g,\theta)]$ is Besse. 
\newline
\newline
Conversely, let $\Orb=S^2(a_1,a_2)$ be a spindle orbifold as in $(2)$ with a positive Besse--Weyl structure $[(g,\theta)]$. Let $(g,\theta)$ be the natural gauge of $[(g,\theta)]$ and let $\pi : S\Orb \to \Orb$ denote the unit tangent bundle with respect to $g$. By \cite[Lemma~3.1]{La16} the unit-tangent bundle $S\Orb$ is a lens space of type $L(a_1+a_2,1)$. The canonical coframe on $S\Orb$ as explained in  \cref{riem} will be denoted by $(\alpha,\beta,\con)$. By \cref{lemma:posweyltofinsler} the $1$-forms $\chi,\eta,\nu$ on $S\Orb$ given by
\[
\chi:=\pi^*(\star\theta)-\con, \qquad \eta:=-\beta, \qquad \nu:=-\alpha
\]
define a generalized Finsler structure on $S\Orb$ of constant Finsler--Gauss cur\-va\-ture $K=1$, i.e. they satisfy the structure equations 
\begin{equation}\label{strucfin}
\d\chi=-\eta\wedge\nu, \qquad \d\eta=-\nu\wedge(\chi-I\eta), \qquad \d\nu=-(\chi-J\nu)\wedge\eta,\\
\end{equation}
for some smooth functions $I,J :S\Orb \to \R$. Moreover they parallelise $S\Orb$ and have the property that the leaves of the foliation $\mathcal{F}_g:=\left\{\chi,\eta\right\}^{\perp}$ are tangential lifts of maximal oriented geodesics of the Weyl connection ${}^{(g,\theta)}\nabla$ on $\Orb$. Since this connection is Besse by assumption, all of these leaves are circles. It follows from a theorem by Epstein \cite{MR0288785} that the leaves are the orbits of a smooth, almost free $S^1$-action. Since $a_1+a_2$ is odd, $S\Orb$ admits a normal covering by a space $M\cong L(2,1)\cong \mathbb{RP}^3$ with deck transformation group $\bar \Gamma$ isomorphic to $\Z_{(a_1+a_2)/2}$. The lifts of $\chi,\eta,\nu$ to $M$, which we denote by the same symbols, define a generalized Finsler structure on $M$ of constant Finsler--Gauss curvature 1. Moreover, the $S^1$-action on $S\Orb$ lifts to a smooth, almost free $S^1$-action on $M$ whose orbits are again the leaves of the foliation $\left\{\chi,\eta\right\}^{\perp}$. The leaves of the foliation $\mathcal{F}_t:=\left\{\eta,\nu\right\}^{\perp}$ correspond to (the lifts of) the fibres of the projection $S\Orb \To \Orb$ (to $M$) and are in particular also all circles. We can cover the space $M$ further by $S^3$ and lift the $S^1$-action and the foliations $\mathcal{F}_g$ and $\mathcal{F}_t$ to $S^3$. By the classification of Seifert fibrations of lens spaces quotienting out the foliations $\mathcal{F}_t$ and $\mathcal{F}_g$ of $S\Orb$, $M$ and $S^3$ yields a diagram of maps as follows (cf. \cref{sub:background_orbifolds} and e.g. \cite{GL16})
\[
	\begin{xy}
		\xymatrix
		{
			\tilde{\Orb} \cong S^2(a_1/\teiler,a_2/\teiler) \ar[d] & \tilde M \cong S^3 \ar[l] \ar[r] \ar[d] & \tilde{\Orb}_g \cong S^2(k_1,k_2) \ar[d]  \\
		  \bar{\Orb} \cong S^2(a_1/a,a_2/a) \ar[d] & M\cong L(2,1) \ar[l] \ar[r]^{\tau} \ar[d] & \bar{\Orb}_g \cong S^2(k'k_1,k'k_2) \ar[d]  \\
		 \Orb\cong S^2(a_1,a_2)  &S\Orb\cong L(a_1+a_2,1) \ar[l]  \ar[r]  & \Orb_g \cong S^2(kk_1,kk_2) 
		}
	\end{xy}
\]
 with $a| \gcd(a_1,a_2)=\teiler\in \{1,2\}$, $\gcd(k_1,k_2)=1$, $k'|2$ and $k | |\Gamma|= (a_1+a_2)/2$. Here the horizontal maps are smooth orbifold submersions, and the vertical maps are coverings (of manifolds in the middle and of orbifolds on the left and the right). Moreover, the deck transformation groups in the middle descend to deck transformation groups of the orbifold coverings. We claim that $a=1$. To prove this we can assume that $\teiler=2$. In this case the co-prime numbers $a_1/\teiler$ and $a_2/\teiler$ have different parity by our assumption that $\teiler^3 | a_1a_2$. Since $a_1/a+a_2/a$ has to be even by \cref{lem:seifert_on_rp3}, it follows that $a=1$ as claimed.

The involution
\[
	\begin{array}{cccl}
		  i: & S\Orb				& \rightarrow 	& S\Orb \\
		         & (x,v) 	&  \mapsto & (x,-v).
	\end{array}
\]
maps fibres of $\mathcal{F}_g$ and $\mathcal{F}_t$ to fibres of $\mathcal{F}_g$ and $\mathcal{F}_t$, respectively, and descends to a smooth orbifold involution $i$ of $\Orb_g$. We claim that the same argument as in \cite{La16} shows that $i$ does not fix the singular points on $\Orb_g$. Here we only sketch the ideas and refer to \cite{La16} for the details: If $a_1$ and $a_2$ are odd then $i$ acts freely on $\Orb_g$, and in this case nothing more has to be said. On the other hand, if $a_1$ and $a_2$ are even, then any geodesic that runs into a singular point is fixed by the action of $i$ on $\Orb_g$. In this case one first has to show that the lift $\tilde i :\tilde M \To \tilde M$ of $i$ to the universal covering of $S\Orb$ commutes with the deck transformation group $\Gamma$ of the covering $\tilde M \To S\Orb$. This can be shown based on the observation that a fibre of $\mathcal{F}_t$ on $S^3$ over the singular point of $\Orb$, together with its orientation, is preserved by both $\Gamma$ and $\tilde i$ (see \cite[Lemma~3.4]{La16} for the details). Now, if $a_1$ and $a_2$ are both even and a singular point on $\Orb_g$ is fixed by $i$, then there also exists a fibre of $\mathcal{F}_g$ on $S^3$ which is invariant under both $\Gamma$ and $\tilde i$. However, in this case only $\Gamma$ preserves the orientation of this fibre, whereas $\tilde i$ reverses its orientation. This leads to a contradiction to the facts that $|\Gamma|=a_1+a_2>2$ and that $\Gamma$ commutes with $\tilde i$ (see \cite[Lemma~3.5]{La16} for the details).

Since $i$ preserves the orbifold structure of $\Gamma_g$ and does not fix its singular points, it has to interchange the singular points. In particular, this implies that $kk_1=kk_2$, and hence $k_1=k_2=1$. Therefore the foliation $\mathcal{F}$ on $S^3$ is the Hopf-fibration and we must have $k'=1$ by \cref{lem:seifert_on_rp3}. In other words, $\bar\Orb_g$ is a smooth $2$-sphere without singular points and $\tau : M \to \bar\Orb_g=S^2$ is a smooth submersion. Consider the map
\[
\aligned
\Phi : M &\to TS^2\\
u &\mapsto -\tau^{\prime}_u(X(u)).\\
\endaligned
\] 
Then by~\cite[Proposition 1]{Bry} $\Phi$ immerses each $\tau$-fibre $\tau^{-1}(x)$ as a curve in $T_xS^2$ that is strictly convex towards $0_x$. The number of times $\Phi(\tau^{-1}(x))$ winds around $0_x$ does not depend on $x$. Since both $M$ and $\mathbb{S}TS^2$ are diffeomorphic to $L(2,1)$, the same argument as above proves that $\Phi$ is one-to-one, and so this number is one. Therefore, by ~\cite[Proposition 2]{Bry} $\Phi(M)$ is a Finsler structure on $S^2$. Moreover, $S^2$ can be oriented in such a way that the $\Phi$-pullback of the canonical coframing induced on $\Phi(M)$ agrees with $(\chi,\eta,\nu)$. In particular, this implies that the Finsler structure satisfies $K=1$ and has periodic geodesic flow. Moreover, because of $a=1$ we have $\bar \Orb = \Orb$ and therefore the preimages of the leaves of $\mathcal{F}_t$ under the covering $M \To S\Orb$ are connected. Since the covering  $M \To S\Orb$ is $(a_1+a_2)/2$-fold, so is its restriction to the fibres of $\mathcal{F}_t$. Therefore, $p:=(a_1+a_2)/2$ is the minimal number for which the geodesic flow of the Finsler structure on $S^2$ is $2\pi p$-periodic. The structure of $\bar \Orb$ implies that all closed geodesics of $S^2$ have length $2\pi p$ except at most two exceptions, which are $q:=a_1$ and $2q-p=a_2$ times shorter than the regular geodesics. In particular, the shortest geodesic has length $2\pi p/q=2\pi \frac{a_1+a_2}{2a_1}$ as claimed.

Finally, going through the proof shows that an isometry between two Finsler metrics as in the statement of Theorem induces a diffeomorphism between the corresponding spindle orbifolds that pulls back the two natural gauges onto each other, and vice versa. Hence, since such a pullback of a natural gauge is a natural gauge, the last statement of the Theorem follows from uniqueness of the natural gauge of a given Besse--Weyl structure.
\end{proof}

\section{Construction of Examples} \label{sec:examples}

In this section we exhibit our duality result to construct a $2$-dimensional family of deformations of a given rotationally symmetric Finsler metric on $S^2$ of constant curvature and all geodesics closed through metrics with the same properties. On the Besse-Weyl side these deformations correspond to deformations through Besse-Weyl structures in a fixed projective equivalence class. 

\subsection{The twistor space}\label{subsec:twistor}

Inspired by the twistorial construction of ho\-lo\-mor\-phic projective structures by Hitchin~\cite{MR699802} and LeBrun~\cite{thesislebrun}, it was shown in~\cite{MR728412,MR812312} how to construct a ``twistor space'' for smooth projective structures. Here we restrict our description to the case of an oriented surface $M$. Let $\mathsf{J}_+(M) \to M$ denote the fibre bundle whose fibre at $x \in M$ consists of the orientation compatible linear complex structures on $T_xM$. By definition, the sections of $\mathsf{J}_+(M) \to M$ are in bijective correspondence with the (almost) complex structures on $M$ that induce the given orientation. 

The choice of a torsion-free connection $\nabla$ on $TM$ allows to define an integrable almost complex structure on $\mathsf{J}_+(M)$ which depends only on the~\textit{projective equivalence} class $[\nabla]$ of $\nabla$. The projective equivalence class $[\nabla]$ of $\nabla$ consists of all torsion-free connections on $TM$ having the same unparametrised geodesics as $\nabla$. We refer to the resulting complex surface $\mathsf{J}_+(M)$ as the~\textit{twistor space} of $(M,[\nabla])$. In~\cite{MR3144212}, it is shown that a Weyl connection in the projective equivalence class $[\nabla]$ corresponds to a section of $\mathsf{J}_+(M) \to M$ whose image is a holomorphic curve. In the case of the $2$-sphere $S^2$ equipped with the projective structure arising from the Levi-Civita connection of the standard metric -- or equivalently, $\mathbb{CP}^1$ equipped with the projective structure arising from the Levi-Civita connection of the Fubini--Study metric -- the twistor space $\mathsf{J}_+(S^2)$ is biholomorphic to $\mathbb{CP}^2\setminus\mathbb{RP}^2$. Here we think of $\mathbb{RP}^2$ as sitting inside $\mathbb{CP}^2$ via its standard real linear embedding. As a consequence, one can show that the Weyl connections on $S^2$ whose geodesics are the great circles are in one-to-one correspondence with the smooth quadrics in $\mathbb{CP}^2\setminus\mathbb{RP}^2$, see~\cite{MR3144212}. Using our duality result, this recovers on the Finsler side Bryant's classification of Finsler structures on $S^2$ of constant curvature $K=1$ and with linear geodesics~\cite{Bry}. 

The construction of the twistor space can still be carried out for the case of a projective structure $[\nabla]$ on an oriented orbifold $\mathcal{O}$. Again, sections of $\mathsf{J}_+(\mathcal{O})\to \mathcal{O}$ having holomorphic image correspond to Weyl connections in $[\nabla]$. Since the spindle orbifold $S^2(a_1,a_2)$ may also be thought of as the~\textit{weighted projective line} $\mathbb{CP}(a_1,a_2)$ with weights $(a_1,a_2)$ (see \cref{sub:fubiny-study-metric}), one would expect that $\mathsf{J}_+(\mathbb{CP}(a_1,a_2))$ can be embedded holomorphically into the weighted projective plane, where we equip $\mathbb{CP}(a_1,a_2)$ with the projective structure arising from the Levi-Civita connection of the Fubini--Study metric. This is indeed the case as we will show in \cref{subsec:twistorembedding}. However, a difficulty that arises is that there is more than one natural candidate for the Fubini--Study metric on $\mathbb{CP}(a_1,a_2)$. We will next identify the correct metric for our purposes. 

\subsection{The Fubini--Study metric on the weighted projective line}\label{sub:fubiny-study-metric}

The \textit{(complex) weighted projective space} is the quotient of $\C^n\setminus\{0\}$ by $\C^*$, where $\C^*$ acts with weights $(a_1,\ldots,a_n) \in \mathbb{N}^n$, that is, by the rule
\[
z \cdot (z_1,\ldots,z_n)=(z^{a_1}z_1,\ldots,z^{a_n}z_n)
\]
for all $z \in \C^*$ and $(z_1,\ldots,z_n) \in \mathbb{C}^n\setminus\{0\}$. It inherits a natural quotient complex structure from $\Co^n$. We denote the projective space with weights $(a_1,\ldots,a_n)$ by $\mathbb{CP}(a_1,\ldots,a_n)$. Clearly, taking all weights equal to one gives ordinary projective space and for $n=2$ with weights $(a_1,a_2)$ we obtain the spindle orbifold $S^2(a_1,a_2)$. To omit case differentations we will henceforth restrict to the case where the pair $(a_1,a_2)$ is co-prime with $a_1\geqslant a_2$ and both numbers odd.

For what follows we would like to have an explicit Besse orbifold metric on $\mathbb{CP}(a_1,a_2)$ which induces the quotient complex structure of $\mathbb{CP}(a_1,a_2)$. The quotient Besse orbifold metric on $S^2(a_1,a_2)$ described in \cref{sub:background_orbifolds} satisfies this condition if and only if $a_1=a_2$. Abstractly, the existence of such a metric follows from the uniformisation theorem for orbifolds \cite{MR1631588} (see also \cite[Theorem~7.8.]{MR3692897}). In fact, since the biholomorphism group of $\mathbb{CP}(a_1,a_2)$ for $(a_1,a_2)\neq (1,1)$ contains a unique subgroup isomorphic to $S^1$, it even follows that such a metric can be chosen to be rotationally symmetric. We are now going to describe an orbifold metric with these properties which, in addition, will have strictly positive Gauss curvature. For this purpose it is convenient to describe the weighted projective line $\mathbb{CP}(a_1,a_2)$ as a quotient of $\SU$. In particular, we will identify $\SU$ as an $(a_1+a_2)$-fold cover of the unit tangent bundle of $\mathbb{CP}(a_1,a_2)$.

We consider
\[
\mathrm{SU}(2):=\left\{\begin{pmatrix} \varz & -\ov\varw \\ \varw & \ov \varz \end{pmatrix} : (\varz,\varw) \in \C^2, |\varz|^2+|\varw|^2=1\right\}
\]
and think of $\U$ as the subgroup consisting of matrices of the form
\[
\e^{\i\vartheta}\simeq \begin{pmatrix} \e^{-\i\vartheta} & 0 \\ 0 & \e^{\i\vartheta}\end{pmatrix} 
\]
for $\vartheta \in \R$. Consider the smooth $S^1$-action 
\begin{equation}\label{eq:defs1action}
T_{\e^{\i\vartheta}}:=L_{\e^{\i(a_1-a_2)\vartheta/2}}\circ R_{\e^{\i(a_1+a_2)\vartheta/2}} : \SU \to \SU 
\end{equation}
for $\vartheta \in \R$ and where $L_g$ and $R_g$ denote left -- and right multiplication by the group element $g \in \SU$. Explicitly, we have
\[
T_{\e^{\i\vartheta}}\left(\esu\right)=\begin{pmatrix} \e^{-\i a_1\vartheta}\varz & -\e^{\i a_2 \vartheta} \ov\varw \\\e^{-\i a_2\vartheta} \varw & \e^{\i a_1\vartheta}\ov \varz\end{pmatrix},
\]
and hence the corresponding quotient can be identified with the weighted projective line $\mathbb{CP}(a_1,a_2)$. 

Recall that the Maurer--Cartan form $\varrho$ is defined as $\varrho_g(v):=\left(L_{g^{-1}}\right)^{\prime}_g(v)$ where $v \in T_g\SU$. Writing the Maurer--Cartan form $\varrho$ of $\SU$ as
\[
\varrho=\begin{pmatrix} -\i\kappa & -\ov\varphi \\ \varphi & \i\kappa\end{pmatrix}
\]
for a real-valued $1$-form $\kappa$ and a complex-valued $1$-form $\varphi$ on $\SU$, the structure equation $\d\varrho+\varrho\wedge\varrho=0$ is equivalent to
\[
\d\varphi=-2\i\kappa\wedge\varphi\quad \text{and} \quad \d\kappa=-\i\varphi\wedge\ov{\varphi}
\]
Since
\[
\varrho=\esu^{-1}\d \esu
\]
we also obtain
\[
\d z=-\ov{w}\varphi-\i z \kappa\quad \text{and}\quad \d w= \ov{z}\varphi-\i w \kappa
\]
as well as
\begin{equation}\label{eq:formsdzdw}
\varphi=z\d w-w \d z\quad \text{and} \quad \kappa=\i\left(\ov{z}\d z+\ov{w}\d w\right).
\end{equation}

In order to compute a basis for the $1$-forms that are semi-basic for the projection $\pi_{a_1,a_2} : \SU \to \mathbb{CP}(a_1,a_2)$, we evaluate the Maurer--Cartan form on the infinitesimal generator $\dualcon:=\left.\frac{\d}{\d t}\right|_{t=0}T_{\e^{\i t}}$ of the $S^1$-action. We obtain
\begin{align*}
\varrho(\dualcon)&=\esu^{-1}\left.\frac{\d}{\d t}\right|_{t=0}\begin{pmatrix} \e^{-\i a_1 t}\varz & -\e^{\i a_2 t} \ov\varw \\\e^{-\i a_2 t} \varw & \e^{\i a_1 t}\ov \varz\end{pmatrix}\\
&=\begin{pmatrix} -\i(a_1|\varz|^2+a_2|\varw|^2) & \i (a_1-a_2)\ov{\varz\varw} \\ \i(a_1-a_2) \varz \varw & \i(a_1|\varz|^2+a_2|\varw|^2) \end{pmatrix},
\end{align*}
so that $\kappa(\dualcon)=\mathscr{U}$ and $\varphi(\dualcon)=\mathscr{V}$, where
\[
\mathscr{U}=a_1|z|^2+a_2|w|^2\quad\text{and} \quad \mathscr{V}=\i(a_1-a_2)zw. 
\]
Consequently, we see that the complex-valued $1$-form 
\begin{equation}\label{eq:defomega}
\omega=\mathscr{U}\varphi-\mathscr{V}\kappa
\end{equation}
satisfies $\omega(\dualcon)=0$ and hence -- by definition -- is semibasic for the projection $\pi_{a_1,a_2} : \SU \to \mathbb{CP}(a_1,a_2)$. Because of the left-invariance of $\varrho$ we have $T_{\e^{\i\vartheta}}^*\varrho=(R_{\e^{\i(a_1+a_2)\vartheta/2}})^*\varrho$ and hence
\begin{align*}
(T_{\e^{\i\vartheta}})^*\varrho&=
\begin{pmatrix} -\i\kappa & \e^{\i(a_1+a_2)\vartheta}\ov\varphi \\ \e^{-\i(a_1+a_2)\vartheta}\varphi & \i\kappa \end{pmatrix},
\end{align*}
where we have used the equivariance property $R_g^*\varrho=g^{-1}\varrho g$ which holds for all $g \in \SU$. Since $(T_{\e^{\i\vartheta}})^*\mathscr{U}=\mathscr{U}$ and $(T_{\e^{\i\vartheta}})^*\mathscr{V}=\e^{-\i(a_1+a_2)\vartheta}\mathscr{V}$, we obtain
\begin{align}
\label{eq:equiomega}(T_{\e^{\i\vartheta}})^*\omega&=\e^{-\i(a_1+a_2)\vartheta}\omega\\
\label{eq:equipsi}(T_{\e^{\i\vartheta}})^*\con&=\con,
\end{align}
where 
\begin{equation}\label{eq:defpsi}
\con=\kappa/\mathscr{U}.
\end{equation} Infinitesimally we obtain
\[
\d\omega=-\i(a_1+a_2)\con\wedge\omega\quad \text{and}\quad \d\con=-\frac{\i K_g}{2(a_1+a_2)}\omega\wedge\ov{\omega}
\]
with $K_g=2(a_1+a_2)/\mathscr{U}^3$. For later usage we also record the identities
\begin{align}
\label{eq:iddz}\d z&=-(\ov{w}/\mathscr{U})\omega-\i a_1 z\con,\\
\label{eq:iddw}\d w&=(\ov{z}/\mathscr{U})\omega-\i a_2 w\con. 
\end{align}
Observe that if we write $\omega=\alpha+\i\beta$ for real-valued $1$-forms $\alpha,\beta$ on $\SU$, then we obtain the structure equations
\begin{align*}
\d\alpha=-(a_1+a_2)\beta\wedge\con, \quad \d\beta&=-(a_1+a_2)\con\wedge\alpha, \quad \d\con=-\frac{K_g}{(a_1+a_2)}\alpha\wedge\beta. 
\end{align*}
Now since $(a_1,a_2)$ are co-prime, the cyclic group $\Z_{a_1+a_2}\subset S^1$ of order $ a_1+a_2$ acts freely on $\SU$. Therefore, the quotient $\SU/\Z_{a_1+a_2}$ is a smooth manifold equipped with a smooth action of $S^1/\Z_{a_1+a_2}\simeq S^1$ which we denote by $\un{T}_{\e^{\i\vartheta}}$. Writing $\upsilon : \SU \to \SU/\Z_{a_1+a_2}$ for the quotient projection, we have the equivariance property 
\[
\upsilon \circ T_{\e^{\i\vartheta}}=\un{T}_{\e^{\i(a_1+a_2)\vartheta}}\circ \upsilon
\]
for all $\e^{\i\vartheta} \in S^1$. Denoting the infinitesimal generator of the $S^1$ action $\un{T}_{\e^{\i\vartheta}}$ by $\un{\dualcon}$ we thus obtain
\[
\upsilon^{\prime}(\dualcon)=(a_1+a_2)\un{\dualcon}.
\]
Likewise, denoting the framing of $\SU$ that is dual to $(\alpha,\beta,\con)$ by $(A,B,\dualcon)$, the equivariance properties~\eqref{eq:equiomega},\eqref{eq:equipsi} imply that we obtain unique well defined vector fields $\un{A},\un{B}$ on $\SU/\Z_{a_1+a_2}$ so that 
\[
\upsilon^{\prime}(A)=\un{A}\quad\text{and}\quad \upsilon^{\prime}(B)=\un{B}.
\]
In particular, the structure equations for $(\alpha,\beta,\con)$ imply the commutator relation
\[[\un{\dualcon},\un{A}]=\un{B}\quad\text{and}\quad [\un{\dualcon},\un{B}]=-\un{A} \quad \text{and} \quad[\un{A},\un{B}]=K_g\un{\dualcon},
\]
where, by abuse of notation, we here think of $K_g$ as a function on the quotient $\SU/\Z_{a_1+a_2}$. These commutator relations in turn imply that the coframing $(\un{\alpha},\un{\beta},\un{\con})$ of $\SU/\Z_{a_1+a_2}$ that is dual to $(\un{A},\un{B},\un{\dualcon})$ defines a generalized Finsler structure of Riemannian type. 

Exactly as in the proof of~\cref{thm:duality} it follows that there exists a unique orientation and orbifold metric $g$ on $\mathbb{CP}(a_1,a_2)$, so that $\pi^*g=\un{\alpha}\otimes\un{\alpha}+\un{\beta}\otimes\un{\beta}$ and so that the area form of $g$ satisfies $\pi^*\area=\un{\alpha}\wedge\un{\beta}$. Here $\pi : \SU/\Z_{a_1+a_2} \to \mathbb{CP}(a_1,a_2)$ denotes the quotient projection with respect to the $S^1$ action $\un{T}_{\e^{\i\vartheta}}$. Moreover, the map
\[
\Phi : \SU/\Z_{a_1+a_2} \to T\mathbb{CP}(a_1,a_2), \quad u\mapsto \pi^{\prime}_u(\un{A}(u))
\]
is a diffeomorphism onto the unit tangent bundle $S\mathbb{CP}(a_1,a_2)$ of $g$ which has the property that the pullback of the canonical coframing on $S\mathbb{CP}(a_1,a_2)$ yields $(\un{\alpha},\un{\beta},\un{\con})$. Thus, we will henceforth identify the unit tangent bundle $S\mathbb{CP}(a_1,a_2)$ of $(\mathbb{CP}(a_1,a_2),g)$ with $\SU/\Z_{a_1+a_2}$.

We will next show that $g$ is a Besse orbifold metric. For an element $y$ in the Lie algebra $\su$ of $\SU$ we let $Y_y$ denote the vector field on $\SU$ generated by the flow $R_{\exp(ty)}$. Recall that the Maurer--Cartan form $\varrho$ satisfies 
$\varrho(Y_y)=y$ for all $y \in \su$. It follows that the basis
\[
e_1=\begin{pmatrix}0 & -1 \\ 1 & 0\end{pmatrix}\quad \text{and}\quad e_2=\begin{pmatrix}0 & \i \\ \i & 0\end{pmatrix}\quad \text{and} \quad e_3=\begin{pmatrix}-\i & 0 \\ 0 & \i\end{pmatrix}
\]
of $\su$ yields a framing $(Y_{e_1},Y_{e_2},Y_{e_3})$ of $\SU$ which is dual to the coframing $(\Re(\varphi),\Im(\varphi),\kappa)$. Therefore, using~\eqref{eq:defomega},~\eqref{eq:defpsi} and the definition of $\alpha,\beta$, we obtain $A=Y_{e_1}/\mathscr{U}$. The flow of $Y_{e_1}$ is given by $R_{\exp(te_1)}$ and hence periodic with period $2\pi$. Recall that the geodesic flow of the metric $g$ on $\mathbb{CP}(a_1,a_2)$ is $\un{A}$. Thinking of $\mathscr{U}$ as a function on $S\mathbb{CP}(a_1,a_2)$, we have 
\[
\un{A}=\upsilon^{\prime}(A)=\upsilon^{\prime}(Y_{e_1})/\mathscr{U}
\]
and hence $g$ is a Besse orbifold metric. 

In complex notation, we have $(\pi_{a_1,a_2})^*g=\omega\circ\ov{\omega}$, where $\circ$ denotes the symmetric tensor product and $\omega=\alpha+\i\beta$. The complex structure on $\mathbb{CP}(a_1,a_2)$ defined by $g$ and the orientation is thus characterized by the property that its $(1,\! 0)$-forms pull-back to $\SU$ to become complex multiples of $\omega$. In particular, this complex structure coincides with the natural quotient complex structure of $\mathbb{CP}(a_1,a_2)$ since $\omega$ is a linear combination of $\d z$ and $\d w$, see~\eqref{eq:formsdzdw}.

Finally, observe that $K_g$ is strictly positive. We have thus shown:
\begin{lemma}\label{lem:fubinistudymetric}
There exists a Besse orbifold metric $g$ and orientation on $\mathbb{CP}(a_1,a_2)$ so that $(\pi_{a_1,a_2})^*g=\alpha\otimes\alpha+\beta\otimes\beta$ and so that $(\pi_{a_1,a_2})^*\area=\alpha\wedge\beta$. This metric and orientation induce the quotient complex structure of $\mathbb{CP}(a_1,a_2)$. Moreover, $g$ has strictly positive Gauss curvature $K_g=2(a_1+a_2)/(a_1|z|^2+a_2|w|^2)^3$.
\end{lemma}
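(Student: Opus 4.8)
The statement collects the outcomes of the computations carried out above, so the plan is mainly to assemble them, the one genuinely delicate point being the descent to the orbifold quotient. First I would record, writing $\omega=\alpha+\i\beta$, that $\alpha\otimes\alpha+\beta\otimes\beta=\omega\circ\ov\omega$ and $\alpha\wedge\beta=\tfrac{\i}{2}\omega\wedge\ov\omega$. Since $\omega(\dualcon)=0$, both of these are semibasic for $\pi_{a_1,a_2}$, and the equivariance~\eqref{eq:equiomega} shows that the phase factors $\e^{\mp\i(a_1+a_2)\vartheta}$ cancel in each product, so both tensors are invariant under the $S^1$-action~\eqref{eq:defs1action}. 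To turn this into an orbifold metric I would route through the free quotient $\SU/\Z_{a_1+a_2}$---already identified via $\Phi$ with the unit tangent bundle $S\mathbb{CP}(a_1,a_2)$---and then argue exactly as in the proof of Theorem~A that the invariant semibasic tensors descend to a unique Riemannian orbifold metric $g$ and compatible area form with $(\pi_{a_1,a_2})^*g=\alpha\otimes\alpha+\beta\otimes\beta$ and $(\pi_{a_1,a_2})^*\area=\alpha\wedge\beta$, fixing the orientation so that $\alpha\wedge\beta$ is positive.

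Next I would establish the Besse property from the identity $A=Y_{e_1}/\mathscr{U}$ derived above: the geodesic vector field $\un{A}=\upsilon^{\prime}(Y_{e_1})/\mathscr{U}$ is a reparametrisation of $\upsilon^{\prime}(Y_{e_1})$ by the strictly positive function $1/\mathscr{U}$, and as the flow $R_{\exp(t e_1)}$ of $Y_{e_1}$ is $2\pi$-periodic, such a positive reparametrisation keeps all its orbits closed. Hence every geodesic of $g$ closes up and $g$ is Besse.

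For the complex structure I would use that the almost complex structure determined by $g$ and the orientation is characterised by its $(1,\!0)$-forms pulling back to $\SU$ as complex multiples of $\omega$; by~\eqref{eq:defomega} together with~\eqref{eq:formsdzdw} the form $\omega$ is a pointwise $\C$-linear combination of $\d z$ and $\d w$, so the resulting $(1,\!0)$-line coincides with the one spanned by $\d z,\d w$, i.e.\ with the quotient complex structure inherited from $\C^2$. Finally, comparing the structure equation $\d\un{\con}=-K_g\,\un{\alpha}\wedge\un{\beta}$ forced by the commutator relations with the canonical coframing equations of~\cref{riem} identifies $K_g$ with the Gauss curvature of $g$, and the closed form $K_g=2(a_1+a_2)/(a_1|z|^2+a_2|w|^2)^3$ is manifestly strictly positive.

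The main obstacle is the orbifold descent: the forms $\alpha,\beta$ are semibasic but not individually invariant, since they rotate into one another, so no coframe descends and one is forced to work with the invariant combinations $\omega\circ\ov\omega$ and $\omega\wedge\ov\omega$ while taking proper care of the two singular fibres. Passing through the smooth free quotient $\SU/\Z_{a_1+a_2}$ and invoking the Theorem~A argument verbatim is what makes this step rigorous; the remaining assertions are then direct readings of the structure equations already at hand.
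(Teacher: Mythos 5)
Your proposal is correct and follows essentially the same route as the paper: the invariant tensors $\omega\circ\ov\omega$ and $\tfrac{\i}{2}\omega\wedge\ov\omega$ descend through the free quotient $\SU/\Z_{a_1+a_2}$ by the Theorem~A argument, the Besse property comes from $A=Y_{e_1}/\mathscr{U}$ with the $2\pi$-periodic flow $R_{\exp(te_1)}$, and the complex structure and curvature statements are read off from $\omega$ being a combination of $\d z,\d w$ and from the structure equations. No gaps.
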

\begin{remark}
The reader may easily verify that in the case $a_1=a_2=1$ we recover the usual Fubini--Study metric on $\mathbb{CP}^1$. For this reason we refer to $g$ as the Fubini--Study metric of $\mathbb{CP}(a_1,a_2)$.
\end{remark}
\begin{figure}
\centering
\begin{subfigure}{.5\textwidth}
  \centering
  \includegraphics[width=.75\linewidth]{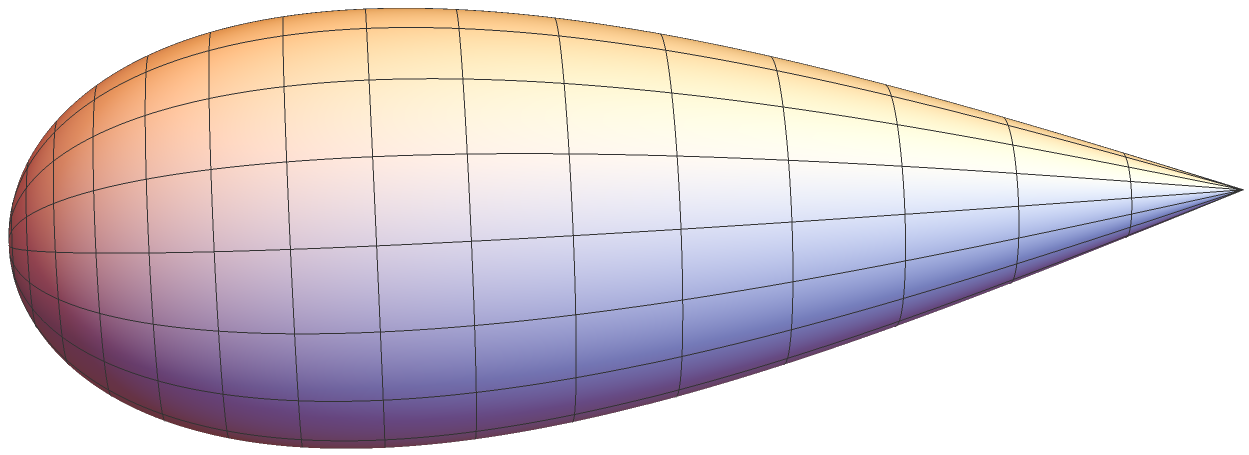}
\end{subfigure}%
\begin{subfigure}{.5\textwidth}
  \centering
  \includegraphics[width=.75\linewidth]{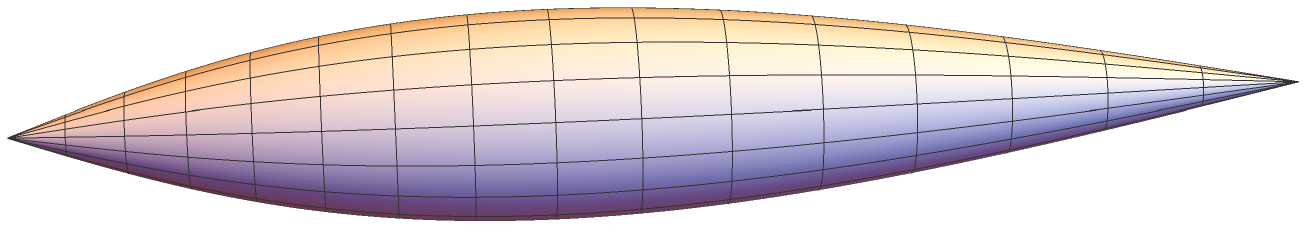}
\end{subfigure}
\caption*{Isometric embeddings of $\mathbb{CP}(3,1)$ and $\mathbb{CP}(5,3)$}
\end{figure}

\subsection{Constructing the twistor space}\label{sec:construcalmostcplx}

 We will next construct the twistor space $\mathsf{J}_+(\mathcal{O})$ in the case of the weighted projective line $\mathbb{CP}(a_1,a_2)$ and where $[\nabla]$ is the projective equivalence class of the Levi-Civita connection of the Fubini--Study metric on $\mathbb{CP}(a_1,a_2)$ constructed in \cref{lem:fubinistudymetric}. As we will see, in this special case the twistor space $\mathsf{J}_+(\mathbb{CP}(a_1,a_2))$ can indeed be embedded into the weighted projective plane $\mathbb{CP}(a_1,(a_1+a_2)/2,a_2)$. We will henceforth also write $\mathsf{J}_+$ for the twistor space, whenever the underlying orbifold is clear from the context. 

In the case of a Riemann surface $(M,J)$ the orientation inducing complex structures on $M$ are in one-to-one correspondence with the~\textit{Beltrami differentials} on $(M,J)$. A Beltrami differential $\mu$ on $M$ is a section of $B=K_{M}^{-1}\otimes \ov{K_M}$, where $K_M=(T^{*}_{\C}M)^{1,0}$ denotes the canonical bundle of $(M,J)$ with inverse $K_M^{-1}$ and where $\ov{K_M}$ denotes its complex conjugate bundle. The line bundle $B$ carries a natural Hermitian bundle metric $h$ and Beltrami differentials are precisely those sections which satisfy the condition $h(\mu,\mu)<1$ at each point of $M$. This identifies $\mathsf{J}_+(M)$ with the open unit disk bundle in $B$. We refer the reader to~\cite{MR1215481} for additional details. 

In the case of the orbifold $\mathbb{CP}(a_1,a_2)$, the orbifold canonical bundle with respect to the Riemann surface structure induced by the orientation and the Fubini--Study metric $g$ described in~\cref{lem:fubinistudymetric}, can be defined as a suitable quotient of $\SU\times \C$. 

Recall that the metric $g$ on $\mathbb{CP}(a_1,a_2)$ satisfies $(\pi_{a_1,a_2})^*g=\omega\circ\ov{\omega}$ and $(\pi_{a_1,a_2})^*\area=\frac{\i}{2}\omega\wedge\ov{\omega}$, where $\omega=\alpha+\i\beta$. In particular, the complex structure on $\mathbb{CP}(a_1,a_2)$ induced by $g$ and the orientation has the property that its $(1,\! 0)$-forms pull-back to $\SU$ to become complex multiples of $\omega$.

 Moreover, recall that from~\eqref{eq:equiomega} that we have $(T_{\e^{\i\vartheta}})^*\omega=\e^{-\i(a_1+a_2)\vartheta}\omega$. We thus define
\[
K_{\mathbb{CP}(a_1,a_2)}=\SU\times_{S^1}\C,
\]
where $S^1$ acts on $\SU$ by~\eqref{eq:defs1action} and on $\C$ with~\textit{spin} $(a_1+a_2)$, that is, by the rule
\[
\e^{\i\vartheta}\cdot z=\e^{\i(a_1+a_2)\vartheta}z. 
\]
Likewise, $K^{-1}_{\mathbb{CP}(a_1,a_2)}$ arises from acting with spin $-(a_1+a_2)$ and $\ov{K_{\mathbb{CP}(a_1,a_2)}}$ arises from the complex conjugate of the spin $(a_1+a_2)$ action, that is, also from the action with spin $-(a_1+a_2)$. Therefore, we obtain $B=\SU\times_{S^1} \C$, where now $S^1$ acts with spin $-2(a_1+a_2)$ on $\C$. The Hermitian bundle metric $h$ on $B$ arises from the usual Hermitian inner product on $\C$ and hence we obtain
\[
\mathsf{J}_+(\mathbb{CP}(a_1,a_2))=\SU\times_{S^1}\mathbb{D},
\]
where $\mathbb{D}\subset \C$ denotes the open unit disk and $S^1$ acts with spin $-2(a_1+a_2)$ on $\mathbb{D}$. 

We now define an almost complex structure $\mathfrak{J}$ on $\J$. On $\SU\times \mathbb{D}$ we consider the complex-valued $1$-forms
\[
\xi_1=\omega+\mu\ov{\omega}\quad\text{and}\quad \xi_2=\d\mu+2(a_1+a_2)\i\mu\con,
\]
where $\mu$ denotes the standard coordinate on $\mathbb{D}$. Abusing notation and writing $T_{\e^{\i\varphi}}$ for the combined $S^1$-action on $\SU\times \mathbb{D}$, we obtain
\[
(T_{\e^{\i\varphi}})^*\xi_1=\e^{-\i(a_1+a_2)\vartheta}\xi_1\quad \text{and}\quad \left(T_{\e^{\i\varphi}}\right)^*\xi_2=\e^{-2\i(a_1+a_2)\vartheta}\xi_2.
\]
Furthermore, by construction, the forms $\xi_1$ and $\xi_2$ are semi-basic for the projection $\SU\times \mathbb{D} \to \J$. It follows that there exists a unique almost complex structure $\mathfrak{J}$ on $\J$ whose $(1,\! 0)$-forms pull-back to $\SU\times \mathbb{D}$ to become linear combinations of $\xi_1$ and $\xi_2$. Finally, in~\cite[\S4.2]{MR4109900} it is shown that the so constructed almost complex structure agrees with the complex structure on the twistor space associated to $(\mathbb{CP}(p,q),[{}^g\nabla])$ where ${}^g\nabla$ denotes the Levi-Civita connection of $g$. 

\begin{remark}
More precisely, in~\cite[\S4.2]{MR4109900} only the case of smooth surfaces is considered, but the construction carries over to the orbifold setting without difficulty. 
\end{remark}

\subsection{Embedding the twistor space}\label{subsec:twistorembedding}

Recall that the twistor space of the $2$-sphere $\mathsf{J}_+(S^2)$ equipped with the complex structure coming from the projective structure of the standard metric maps biholomorphically onto $\mathbb{CP}^2\setminus \mathbb{RP}^2$. The map arises as follows. Consider $S^2$ as the unit sphere in $\R^3$ and identify the tangent space $T_eS^2$ to an element $e \in S^2$ with the orthogonal complement $\{e\}^{\perp}\subset \R^3$. Then an orientation compatible complex structure $J$ on $T_eS^2$ is mapped to the element $[v+iJv]\in \mathbb{CP}^2\setminus \mathbb{RP}^2$ where $v\in T_e S^2$ is any non-zero tangent vector. With respect to our present model of $\mathsf{J}_+(S^2)$ as an associated bundle this map takes the following explicit form
\[
		 \begin{array}{cccl}
		 \bihol : &\mathsf{J}_+(S^2)=\mathrm{SU}(2)\times_{S^1} \mathbb{D} 				& \To  	&	\Co\mathbb{P}^2 \\
		           		& [z:w:\mu] 	& \MTo  & [z^2-\mu\ov{w}^2:zw+\ov{z}\ov{w}\mu:w^2-\mu\ov{z}^2]
		 \end{array}	
\]
after applying a linear coordinate change (see \cref{appendix1}). In this new coordinate system the real projective space $\mathbb{RP}^2$ sits inside $\mathbb{CP}^2$ as the image of the unit sphere in $\C\times \R$ under the map $j=\pi\circ\hat j: \C\times \R \To \mathbb{CP}^2$ where $j: \C\times \R \To \C^3$ is defined as $j(z,t)= (z,it,\ov{z})$ and where $\pi:\C^3\backslash \{0\} \To \mathbb{CP}^2$ is the quotient projection. Note that for $\mu=0$ the map $\bihol$ restricts to the \emph{Veronese embedding} of $\mathbb{CP}^1$ into $\mathbb{CP}^2$.

We observe that in the weighted case the very same map $\bihol$ also defines a smooth map of orbifolds. In fact, we are going to show the following statement in a sequence of lemmas.


\begin{proposition}\label{prp:weighted_biho} The map
\[
\bihol : \mathsf{J}_+(\Co\mathbb{P}(a_1,a_2))=\mathrm{SU}(2)\times_{S^1} \mathbb{D} \To   \Co\mathbb{P}(a_1,(a_1+a_2)/2,a_2)
\]
defined by
\[
[z:w:\mu] \mapsto [z^2-\mu\ov{w}^2:zw+\ov{z}\ov{w}\mu:w^2-\mu\ov{z}^2]
\]
is a biholomorphism onto $\Co\mathbb{P}(a_1,(a_1+a_2)/2,a_2) \backslash j(S^2)$ where $j=\pi \circ \hat j$ as above. Moreover, $j(S^2)$ is a real projective plane $\mathbb{RP}^2((a_1+a_2)/2)$ with a cyclic orbifold singularity of order $(a_1+a_2)/2$.
\end{proposition}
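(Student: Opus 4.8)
The plan is to verify in turn that $\bihol$ is well defined, holomorphic, an immersion, and finally a bijection onto the asserted complement, and then to treat the description of $j(S^2)$ as a separate quotient computation. First I would lift $\bihol$ to $\hat\bihol:\SU\times\D\To\C^3$, $(z,w,\mu)\MTo(z^2-\mu\ov w^2,\,zw+\ov z\ov w\mu,\,w^2-\mu\ov z^2)=:(u_1,u_2,u_3)$, and check equivariance: under the $S^1$-action~\eqref{eq:defs1action} on $\SU$ combined with spin $-2(a_1+a_2)$ on $\D$, the three components pick up the factors $\e^{-2\i a_1\vartheta}$, $\e^{-\i(a_1+a_2)\vartheta}$, $\e^{-2\i a_2\vartheta}$, so $\hat\bihol$ intertwines the $S^1$-action with the weighted $\C^*$-action of weights $(a_1,(a_1+a_2)/2,a_2)$ via $\e^{\i\vartheta}\MTo\e^{-2\i\vartheta}$. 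Using $|z|^2+|w|^2=1$ one finds the clean identity $u_1u_3-u_2^2=-\mu$, which also shows $\hat\bihol$ never meets $0\in\C^3$ (if all $u_i$ vanish then $\mu=0$, forcing $z=w=0$); hence $\bihol$ descends to a well-defined map of orbifolds.

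Next, holomorphicity. Using~\eqref{eq:iddz} and~\eqref{eq:iddw} I would expand $\d u_1,\d u_2,\d u_3$ in the coframe $\{\omega,\ov\omega,\con,\d\mu\}$. Collecting terms, the $\omega$ and $\ov\omega$ contributions always combine into $\omega+\mu\ov\omega=\xi_1$, the $\d\mu$ term combines with part of the $\con$ terms into $\xi_2=\d\mu+2(a_1+a_2)\i\mu\con$, and the residual connection term is exactly $-2\i\,(\text{weight})\,u_i\,\con$; explicitly
\[
\d u_1=-\tfrac{2z\ov w}{\mathscr{U}}\,\xi_1-\ov w^2\,\xi_2-2\i a_1 u_1\,\con,
\]
and similarly for $u_2,u_3$ with weights $(a_1+a_2)/2$ and $a_2$. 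Since the $(1,0)$-forms of the target annihilate the Euler field $\sum(\text{weight})\,u_i\,\partial_{u_i}$ and the $\con$-direction is precisely its image, the pullbacks of target $(1,0)$-forms have no $\con$-component and lie in the span of $\xi_1,\xi_2$; thus $\bihol$ is holomorphic for the complex structure constructed in~\cref{sec:construcalmostcplx}. Reading off the $2\times 3$ coefficient matrix of $(\xi_1,\xi_2)$, its $2\times 2$ minors are, up to sign, $2\ov z\ov w/\mathscr{U}$, $\ov w^2/\mathscr{U}$ and $\ov z^2/\mathscr{U}$; as $(z,w)\neq(0,0)$ these never vanish simultaneously, so $\bihol$ is everywhere an immersion, hence a local biholomorphism between the two complex surfaces.

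For bijectivity onto $\Co\mathbb{P}(a_1,(a_1+a_2)/2,a_2)\setminus j(S^2)$ I would first prove injectivity from the inverse formulas: on a unit-normalized representative the identity $u_1u_3-u_2^2=-\mu$ recovers $\mu$, after which the linear system $\begin{pmatrix}1&-\mu\\-\ov\mu&1\end{pmatrix}\begin{pmatrix}z^2\\\ov w^2\end{pmatrix}=\begin{pmatrix}u_1\\\ov{u_3}\end{pmatrix}$ and its $z\leftrightarrow w$ analogue (both invertible since $1-|\mu|^2>0$ on the open disk) recover $z^2,w^2$, with $u_2$ fixing the relative phase; the overall sign $(z,w)\mapsto(-z,-w)$ is $T_{\e^{\i\pi}}$ and so lies in the $S^1$-orbit, giving injectivity. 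Surjectivity onto the complement, together with disjointness of the image from $j(S^2)$, I would obtain by passing to the closed disk bundle: $\hat\bihol$ extends continuously, so $\bihol:\J\To\Co\mathbb{P}(a_1,(a_1+a_2)/2,a_2)\setminus j(S^2)$ is proper, its open image is therefore closed in the connected complement and hence everything, provided the boundary $|\mu|=1$ maps onto $j(S^2)$ and not into the interior image. The boundary identification comes from the exact relations $u_3=-\mu\ov{u_1}$ and $u_2=\mu\ov{u_2}$ valid when $|\mu|=1$, which after rescaling become $u_3=\ov{u_1}$, $u_2\in\i\R$ — precisely the defining relations of $\hat j$; the separation of $\{|\mu|<1\}$ from $\{|\mu|=1\}$ I would phrase through the Hermitian norm $h$ on $B$, so that $\J=\{h<1\}$ and $j(S^2)=\{h=1\}$ intrinsically.

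Finally, the orbifold structure of $j(S^2)$, where $j(z_0,t)=[z_0:\i t:\ov{z_0}]$ on the unit sphere $|z_0|^2+t^2=1$. Using $\gcd(a_1,a_2)=1$ I would show $j(z_0,t)=j(z_0',t')$ exactly when $(z_0',t')=(\zeta^k z_0,(-1)^k t)$ with $\zeta=\e^{2\pi\i a_1/(a_1+a_2)}$, i.e. $j$ realizes the quotient of $S^2$ by the cyclic $\Z_{a_1+a_2}$-action by a rotary reflection (rotation about the $t$-axis composed with $t\mapsto -t$). This action is free off the poles; its square generates the $\Z_{(a_1+a_2)/2}$ rotation subgroup, whose quotient is a football $S^2((a_1+a_2)/2,(a_1+a_2)/2)$, and the residual free involution swaps the two cone points, yielding $\mathbb{RP}^2$ topologically with a single cone point of order $(a_1+a_2)/2$; the orbifold Euler characteristic check $(a_1+a_2)\cdot\tfrac{2}{a_1+a_2}=2=\chi(S^2)$ confirms this. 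Matching with the target, the point $[0:\i:0]$ (the image of the poles $z_0=0$) has isotropy the $(a_1+a_2)/2$-th roots of unity while all other $j$-points have trivial isotropy by coprimality, so $j(S^2)\cong\mathbb{RP}^2((a_1+a_2)/2)$. I expect the main obstacle to be the precise matching of the image with the complement of $j(S^2)$ — both the disjointness of $\bihol(\J)$ from $j(S^2)$ and the surjectivity of the boundary onto $j(S^2)$ demand care with the weighted rescalings — whereas the equivariance, holomorphicity and immersion steps are direct computations and the quotient description of $j(S^2)$ is elementary.
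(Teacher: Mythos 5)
Your architecture is sound and in several places genuinely different from the paper's: your Euler-field argument for holomorphicity (each $\d u_i\equiv -2\i\,(\text{weight})\,u_i\,\con$ modulo $\xi_1,\xi_2$, so semibasic $(1,0)$-forms pull back into $\mathrm{span}\{\xi_1,\xi_2\}$) is cleaner than the paper's explicit pullback of $\Pi_1,\Pi_2$; your properness/open--closed argument replaces the paper's hands-on intermediate-value-theorem construction of preimages; and your quotient analysis of $j(S^2)$ supplies the orbifold identification $\mathbb{RP}^2((a_1+a_2)/2)$, which the paper asserts but does not spell out. Two of your computational steps are, however, incomplete as stated. For the immersion, non-vanishing of the three minors only gives $\operatorname{rank}M=2$; since the target's semibasic $(1,0)$-forms are the $c$ with $\sum_i c_i\,(\text{weight}_i)\,u_i=0$, you must also check that $\ker M$, spanned by $(\ov{z}^2,2\ov{z}\ov{\varw},\ov{\varw}^2)$, is not semibasic, i.e.\ that $a_1\ov{z}^2u_1+(a_1+a_2)\ov{z}\ov{\varw}u_2+a_2\ov{\varw}^2u_3\neq 0$ (it equals $\mathscr{U}$). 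For the boundary, the relations $u_3=-\mu\ov{u_1}$, $u_2=\mu\ov{u_2}$ at $|\mu|=1$ place the image in the real locus $\{z_3=\ov{z_1},\,z_2\in\i\R\}$ after a unit-modulus rescaling, but in the weighted setting this locus is strictly larger than $j(S^2)$: one cannot rescale by positive reals to normalise while preserving $z_3=\ov{z_1}$ unless $a_1=a_2$, so membership in $j(S^2)$ additionally requires $|u_1|^2+|u_2|^2=1$, which you must verify (it holds: the sum collapses to $(|z|^2+|\varw|^2)^2$).

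The genuine gap is the disjointness of $\bihol(\{|\mu|<1\})$ from $j(S^2)$, which your whole surjectivity scheme presupposes (properness of $\bihol$ as a map \emph{into the complement} fails without it) and which you defer to ``the Hermitian norm $h$ on $B$''. That does not work as stated: $h$ lives on the source bundle, and to transport the level sets $\{h<1\}$, $\{h=1\}$ to the target you would already need injectivity of the extended map on the \emph{closed} disk bundle, which you never establish. There is no obvious target-side invariant either, since $z_2^2-z_1z_3$ scales by $\lambda^{a_1+a_2}$ and its modulus is not well defined on the quotient. The paper closes this by a direct computation (its first lemma): if $\bihol([z:\varw:\mu])\in j(S^2)$ with $|z|^2+|\varw|^2=1$, the three constraints $\lambda^{2a_1}u_1=\ov{\lambda^{2a_2}u_3}$, $\lambda^{a_1+a_2}u_2\in\i\R$ and $\lambda^{2a_1}u_1\cdot\lambda^{2a_2}u_3+|\lambda^{a_1+a_2}u_2|^2=1$ combine with $u_2^2-u_1u_3=\mu$ to force $|\mu|=1$ in every case, a contradiction. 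You have all the ingredients for this (it is your boundary computation run in reverse), but as written the step is missing, and without it neither disjointness nor surjectivity is proved.
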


\begin{remark}
More precisely, by biholomorphism, we mean a diffeomorphism $\bihol$ which is holomorphic in the sense that it is $(\mathfrak{J},J_0)$-linear. By this we mean  that it satisfies $J_0\circ\bihol^{\prime}=\bihol^{\prime}\circ\mathfrak{J}$, where $\mathfrak{J}$ denotes the almost complex structure defined on $\mathsf{J}_+(\mathbb{CP}(a_1,a_2))$ in \cref{sec:construcalmostcplx} and $J_0$ the standard complex structure on the weighted projective space $\mathbb{CP}(a_1,(a_1+a_2)/2,a_2)$. 
\end{remark}

In the following we also describe $\mathrm{SU}(2)\times_{S^1} \mathbb{D}$ as a quotient of $\Co^2\backslash\{0\} \times \Co$ by the respective weighted $\Co^*$-action.  Here $\lambda \in \Co^*$ acts as $\lambda/|\lambda|$ on the second factor. Then the map $\bihol$ is covered by the $\Co^*$-equivariant map
\[
		 \begin{array}{cccl}
		 \hat \bihol : &\Co^2\backslash\{0\} \times \mathbb{D} 				& \To  	&	\Co^3 \backslash\{0\}\\
		           		& (z,w,\mu) 	& \MTo  & (z^2-\mu\ov{w}^2,zw+\ov{z}\ov{w}\mu,w^2-\mu\ov{z}^2).
		 \end{array}	
\]
Since we already know that the map $\bihol$ is an immersion in the unweighted case, and since the $\C^*$-actions on $\Co^2\backslash\{0\} \times \mathbb{D}$ and on $\Co^3 \backslash\{0\}$ do not have fixed points, it follows that the map $\hat \bihol$, and hence also the map $\bihol$ in the weighted case, is an immersion as well. Alternatively, the same conclusion can be drawn from an explicit computation which shows that the determinant of the Jacobian of the map $\hat \bihol$ is given by $\mathrm{det}(J(z,w,\mu))=4(1-|\mu|^2)(|z|^2+|w|^2)^4$.
%

There are different ways to continue the proof of \cref{prp:weighted_biho}. For instance, one can show that the map $\bihol$ extends to a smooth orbifold immersion of a certain compactification of $\mathsf{J}_+$ onto $\Co\mathbb{P}(a_1,(a_1+a_2)/2,a_2)$ so that the complement of $\mathsf{J}_+$ is mapped onto $j(S^2)$. Compactness and the fact that $\Co\mathbb{P}(a_1,(a_1+a_2)/2,a_2)$ is simply connected as an orbifold then imply that this map is a diffeomorphism. Since we do not need such a compactification otherwise at the moment, we instead prove the proposition by hand, which, in total, is less work. More precisely, we proceed by proving the following three lemmas.

\begin{lemma} The image of~$\bihol:\mathsf{J}_+ \To \Co\mathbb{P}(a_1,(a_1+a_2)/2,a_2)$ is disjoint from $j(S^2)$.
\end{lemma}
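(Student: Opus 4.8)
The plan is to work on the covering map $\hat{\bihol}:\C^2\setminus\{0\}\times\mathbb{D}\To\C^3\setminus\{0\}$ and argue by contradiction. Writing $m=(a_1+a_2)/2$ (an integer, as $a_1,a_2$ are odd), I represent a point of $\mathsf{J}_+$ by $(z,w)\in\SU$ (so $|z|^2+|w|^2=1$) and $\mu\in\mathbb{D}$, and suppose its image lies in $j(S^2)$. Since $\bihol$ and $j$ are covered by $\hat{\bihol}$ and $\hat{j}$, this means
\[
\hat{\bihol}(z,w,\mu)=\lambda\cdot\hat{j}(\zeta,t)=(\lambda^{a_1}\zeta,\lambda^m\i t,\lambda^{a_2}\ov\zeta)
\]
for some $\lambda\in\C^*$ and $(\zeta,t)\in\C\times\R$ with $|\zeta|^2+t^2=1$. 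The first step is to evaluate the quantity $u_1u_3-u_2^2$ (which is homogeneous of weight $a_1+a_2=2m$ for the weighted action) on both sides: a direct computation gives $u_1u_3-u_2^2=-\mu(|z|^2+|w|^2)^2=-\mu$ from the left, while $u_1u_3-u_2^2=\lambda^{2m}(|\zeta|^2+t^2)=\lambda^{2m}$ from the right. Hence $\lambda^{2m}=-\mu$, and in particular $|\lambda|^{2m}=|\mu|=:r<1$. This already kills the degenerate case $\mu=0$, since $\lambda^{2m}=-\mu\neq0$.

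Next I extract the moduli. From the right-hand side, $|u_1u_3|=|\lambda|^{2m}|\zeta|^2$ and $|u_2|^2=|\lambda|^{2m}t^2$, so $|u_1u_3|+|u_2|^2=|\lambda|^{2m}=r=|u_1u_3-u_2^2|$. Thus equality holds in the triangle inequality $|u_1u_3-u_2^2|\le|u_1u_3|+|u_2|^2$, forcing $u_1u_3$ and $-u_2^2$ to be non‑negative real multiples of a common unit. A short computation yields $\Im\!\big(\ov{u_1u_3}\,u_2^2\big)=(1-|\mu|^2)\,\Im(\mu\,\ov{z^2w^2})$, so this alignment forces $\mu\,\ov{z^2w^2}\in\R$, and the sign condition $\ov{u_1u_3}\,u_2^2\le0$ singles out $\Re(\ov\mu\,z^2w^2)=+|\mu|\,|z|^2|w|^2$ (the opposite sign would give $\ov{u_1u_3}\,u_2^2\ge0$). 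Substituting this into $|u_1|^2=|z|^4+|\mu|^2|w|^4-2\Re(\ov\mu\,z^2w^2)$ and its analogue for $|u_3|^2$, and setting $x=|z|^2,\ y=|w|^2$ (so $x+y=1$), I obtain the clean formulas $|u_1|=|x-ry|$, $|u_3|=|y-rx|$, together with the constraint $xy(1-r)^2\le(x-y)^2r$ coming from $\ov{u_1u_3}\,u_2^2\le0$.

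The contradiction then comes from computing $|u_1|/|u_3|$ two ways. On one hand, $|u_1|/|u_3|=|\lambda|^{a_1-a_2}=r^{(a_1-a_2)/(2m)}$, which lies \emph{strictly} between $r$ and $1$ because $0<r<1$ and $0<a_1-a_2<2m$. On the other hand I will show $|u_1|/|u_3|\notin(r,1)$: one computes $|u_1|^2-|u_3|^2=(x-y)(1-r^2)$, so $|u_1|/|u_3|\ge1$ whenever $x\ge y$; and for $x<y$, writing $x=\tfrac{1-d}{2},\,y=\tfrac{1+d}{2}$ with $d>0$, the constraint above is \emph{exactly equivalent} to $d\ge\tfrac{1-r}{1+r}$, which in turn gives $x(1+r^2)\le2ry$ and hence $|u_1|/|u_3|\le r$. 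Either way the two values cannot coincide. The remaining degenerate possibilities are immediate: $\zeta=0$ forces $u_1=u_3=0$, which using $|\mu|<1$ reduces to $\mu=0$; and the boundary cases $x=0$ or $y=0$ are already covered, since they give $|u_1|/|u_3|\in\{r,1/r\}$.

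I expect the main obstacle to be precisely the regime $|z|<|w|$, i.e.\ $x<y$: there the alignment of $u_1u_3$ and $-u_2^2$ is genuinely compatible with membership in $j(S^2)$, so the triangle‑inequality step alone does not conclude, and one is forced to exploit the second, modulus, constraint. The fact that makes the argument close is the exact reduction of that constraint to the single inequality $d\ge\frac{1-r}{1+r}$, which pushes $|u_1|/|u_3|$ below $r$ and hence out of the interval $(r,1)$ in which a point of $j(S^2)$ would necessarily lie.
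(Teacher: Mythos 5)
Your proof is correct, and I checked the computations behind the asserted steps: $u_1u_3-u_2^2=-\mu$ on the unit sphere $|z|^2+|w|^2=1$; the identity $\Im\bigl(\ov{u_1u_3}\,u_2^2\bigr)=(1-|\mu|^2)\Im(\mu\,\ov{z^2w^2})$; the factorisation $\ov{u_1u_3}\,u_2^2=(xy)^2(1+\epsilon r)^2\bigl(1+r^2-\epsilon r(x^2+y^2)/(xy)\bigr)$ that justifies your sign selection and the constraint $xy(1-r)^2\leqslant r(x-y)^2$; the formulas $|u_1|=|x-ry|$, $|u_3|=|y-rx|$; and the equivalence $|x-ry|\leqslant r(y-rx)\iff x(1+r^2)\leqslant 2ry$ in the regime $x<y$. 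Your route is, however, genuinely different from the paper's. The paper writes out the three conditions for membership in $j(S^2)$ --- conjugate symmetry of the outer coordinates, a purely imaginary middle coordinate, and the unit-norm condition, the last of which collapses to $1=-\lambda^{2(a_1+a_2)}\mu$ exactly as in your first step --- and then dispatches the case $zw\neq 0$ in a single sentence by asserting that the last two conditions already force $|\mu|=1$. Your analysis makes explicit that there is a nontrivial sub-case here: when $\ov{\mu}\,z^2w^2\in\R_{>0}$ (your $\epsilon=+1$ branch) the imaginary-part and norm conditions are compatible with $|\mu|<1$, and one genuinely needs the remaining condition, which you encode as the modulus ratio $|u_1|/|u_3|=|\lambda|^{a_1-a_2}$, to obtain a contradiction. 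So your argument is longer but treats in full a configuration that the paper's one-line dispatch does not address in detail, at the price of a more elaborate case analysis.

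Two small points. First, your claim that $r^{(a_1-a_2)/(2m)}$ lies \emph{strictly} between $r$ and $1$ uses $a_1>a_2$, whereas the standing hypotheses of the section ($a_1\geqslant a_2$ co-prime and both odd) admit $a_1=a_2=1$; in that case the ratio equals $1$, the branch $x\geqslant y$ only gives $|u_1|/|u_3|\geqslant 1$, and you need the extra line that equality forces $x=y$ (via $|u_1|^2-|u_3|^2=(1-r^2)(x-y)$), which then violates the constraint $xy(1-r)^2\leqslant r(x-y)^2$. Second, in the sign analysis the parenthetical ``the opposite sign would give $\ov{u_1u_3}\,u_2^2\geqslant 0$'' should be sharpened to strict positivity (which your factorisation does deliver for $zw\neq 0$ and $|\mu|<1$), since $\geqslant 0$ alone is still compatible with the required $\leqslant 0$ at zero. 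Neither point affects the substance of the argument.
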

\begin{proof} Suppose we have $[z:w:\mu]\in \mathsf{J}_+$ with $\bihol([z:w:\mu]) \in j(S^2)$.  We can assume that $|z|^2+|w|^2=1$. Then there exists some $\lambda \in \Co^*$ such that $\lambda^{2a_1}(z^2-\ov{w}^2\mu)=\ov{\lambda}^{2a_2}(\ov{w}^2-z^2\ov{\mu})$, $\lambda^{a_1+a_2}(zw+\ov{z}\ov{w}\mu)\in i \R$ and
\[
	1=(\lambda^{2a_1}(z^2-\ov{w}^2\mu))(\lambda^{2a_2}(w^2-\ov{z}^2\mu))+|\lambda^{a_1+a_2}(z+\ov{z}\ov{w}\mu)|^2=-\lambda^{2(a_1+a_2)}\mu.
\]
If $zw\neq 0$ the last two conditions imply that $|\mu|=1$, a contradiction. Let us assume that $z=0$. Then $w\neq 0$ and so the first condition implies that $-\lambda^{2a_1}\mu=\ov{\lambda}^{2a_2}$. Together with $1=-\lambda^{2(a_1+a_2)}\mu$ this also implies $|\mu|=1$. The same conclusion follows analogously in the case $z\neq 0$. Hence, in any case we obtain a contradiction and so the lemma is proven.
\end{proof}

\begin{lemma} The map $\bihol:\mathsf{J}_+ \To \Co\mathbb{P}(a_1,(a_1+a_2)/2,a_2)$ is injective.
\end{lemma}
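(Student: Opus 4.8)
The plan is to lift everything to the covering map $\hat\bihol$ and reduce the claim to a statement about the weighted $\Co^*$-action. Write $U=\varz^2-\mu\ov\varw^2$, $V=\varz\varw+\ov\varz\ov\varw\mu$ and $W=\varw^2-\mu\ov\varz^2$ for the three components of $\hat\bihol(\varz,\varw,\mu)$, and likewise $U',V',W'$ for a second point $(\varz',\varw',\mu')$. Using the positive-real part of the $\Co^*$-action, which rescales $(\varz,\varw)$ monotonically onto the unit sphere, I would first normalise both representatives so that $|\varz|^2+|\varw|^2=|\varz'|^2+|\varw'|^2=1$. An equality $\bihol([\varz:\varw:\mu])=\bihol([\varz':\varw':\mu'])$ then means precisely that there is some $\lambda\in\Co^*$ with $U=\lambda^{2a_1}U'$, $V=\lambda^{a_1+a_2}V'$ and $W=\lambda^{2a_2}W'$, exactly the powers appearing in the proof of the previous lemma, since the relevant weights $(2a_1,a_1+a_2,2a_2)$ represent $\Co\mathbb{P}(a_1,(a_1+a_2)/2,a_2)$. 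The goal is to show that necessarily $|\lambda|=1$ and that the two normalised triples then agree up to the residual $S^1$-action $\rot$.

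The computation rests on two algebraic identities, valid whenever $|\varz|^2+|\varw|^2=1$:
\[
V^2-UW=\mu \qquad\text{and}\qquad |U|^2+2|V|^2+|W|^2=1+|\mu|^2.
\]
The first has weight $2(a_1+a_2)$ under the target action, so it immediately yields $\mu=\lambda^{2(a_1+a_2)}\mu'$. Feeding this together with $|U|^2=|\lambda|^{4a_1}|U'|^2$, $|V|^2=|\lambda|^{2(a_1+a_2)}|V'|^2$ and $|W|^2=|\lambda|^{4a_2}|W'|^2$ into the second identity for both points turns the problem into the single real equation $g(t)=1$ in $t:=|\lambda|$, where
\[
g(t)=|U'|^2t^{4a_1}+2|V'|^2t^{2(a_1+a_2)}+|W'|^2t^{4a_2}-|\mu'|^2t^{4(a_1+a_2)}.
\]
Since the primed point is normalised we have $g(1)=1$; since the unprimed point is normalised as well, its parameter $t=|\lambda|$ is also a root of $g$, and it lies in the admissible range $|\mu'|^2t^{4(a_1+a_2)}=|\mu|^2<1$, that is $t\in\bigl(0,|\mu'|^{-1/(2(a_1+a_2))}\bigr)$. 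I would then show that $t=1$ is the \emph{only} root of $g$ in this range, by verifying that $g$ is strictly increasing on that interval; this forces $|\lambda|=1$.

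Once $|\lambda|=1$ is established, the remaining identification is pure linear algebra. For $|\mu|<1$ the three $2\times2$ systems expressing $\varz^2$, $\varw^2$ and $\varz\varw$ in terms of $U,V,W$ and their conjugates all have determinant $1-|\mu|^2\neq0$, giving
\[
\varz^2=\frac{U+\mu\ov W}{1-|\mu|^2},\qquad \varw^2=\frac{W+\mu\ov U}{1-|\mu|^2},\qquad \varz\varw=\frac{V-\mu\ov V}{1-|\mu|^2},
\]
and the analogous formulas for $(\varz',\varw',\mu')$. Combined with $\mu=\lambda^{2(a_1+a_2)}\mu'$, these recover $(\varz,\varw,\mu)$ from $(\varz',\varw',\mu')$ up to the overall sign $(\varz,\varw)\mapsto(-\varz,-\varw)$; as $a_1,a_2$ are odd and $a_1+a_2$ is even, this sign is realised by $\rot$ at $\vartheta=\pi$, which fixes $\mu$. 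Hence the two triples lie on the same $S^1$-orbit and define the same point of $\mathsf{J}_+$. The degenerate cases $\varz=0$ and $\varw=0$, corresponding to the two orbifold points, would be handled separately, exactly as in the preceding lemma.

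The main obstacle is the step $|\lambda|=1$, and the difficulty is structural rather than computational: when $a_1\neq a_2$ the Hermitian form $|U|^2+2|V|^2+|W|^2$ is \emph{not} invariant under the weighted action, because the three coordinates carry different weights, so one cannot simply read off $t=1$ from an invariant as one does in the unweighted case. Indeed $g$ is not monotone on all of $(0,\infty)$: its top-degree term $-|\mu'|^2t^{4(a_1+a_2)}$ eventually dominates and produces a spurious second root. The point of the argument is that this second root always satisfies $|\mu|\geqslant1$, so it is excluded by the constraint $\mu\in\mathbb{D}$; confining $t$ to the interval where $|\mu|<1$ keeps the negative term subordinate to the lower-order ones, and verifying the resulting strict monotonicity of $g$ there is the delicate heart of the proof.
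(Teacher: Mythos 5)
Your setup is sound and large parts of the plan do match the paper: the identity $V^2-UW=\mu$ yielding $\mu=\lambda^{2(a_1+a_2)}\mu'$ is exactly the paper's first step, and your endgame (inverting the three $2\times 2$ systems with determinant $1-|\mu|^2$ and absorbing the residual sign via the circle action at $\vartheta=\pi$, using that $a_1,a_2$ are odd) is a correct version of the paper's final paragraph. The genuine gap is the step you yourself flag as the heart of the argument: you never prove $|\lambda|=1$, and the route you propose cannot be completed with the ingredients you put on the table. The only quantitative input you feed into $g$ is the normalisation $|U'|^2+2|V'|^2+|W'|^2=1+|\mu'|^2$, i.e.\ $g(1)=1$ together with $|\mu'|<1$. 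That is not enough to make $t=1$ the unique admissible root, nor even to make $g$ increasing at $t=1$: with $a_1=3$, $a_2=1$ and $|U'|^2=|V'|^2=0$, $|W'|^2=3/2$, $|\mu'|^2=1/2$ your identity holds, yet $g(t)=\tfrac32t^{4}-\tfrac12t^{16}$ satisfies $g'(1)<0$ and $g(t)=1$ has a second root $t_0<1$ with $|\mu'|^2t_0^{16}\approx 0.22<1$. This quadruple is not realised by an actual point of $\mathsf{J}_+$ (indeed $U'=V'=0$ forces $\mu'=0$), but that is precisely the point: excluding it requires further relations among $|U'|,|V'|,|W'|,|\mu'|$ that you never invoke, for instance $|U'+\mu'\ov{W'}|+|W'+\mu'\ov{U'}|=1-|\mu'|^2$ (the identity driving the surjectivity lemma) and $|V'|^2+|U'||W'|\geqslant|\mu'|$ from $V'^2-U'W'=\mu'$. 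So the monotonicity claim is not a delicate verification left to the reader; as stated it is false, and repairing it needs input you have not identified.

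For comparison, the paper does not isolate $|\lambda|=1$ as a separate real-variable problem. After extracting $\mu=\lambda^{2(a_1+a_2)}\mu'$ it rescales the primed representative to $(z'',w'')=(\lambda^{a_1}z',\lambda^{a_2}w')$ and compares the combinations $z_1+\mu\ov{z_3}$ and $z_3+\mu\ov{z_1}$ on the two sides, which collapse to $z^2(1-|\mu|^2)$ and $z''^2(1-|\mu|^2)$; dividing by $1-|\mu|^2\neq0$ gives $z^2=z''^2$, $w^2=w''^2$ at once, and the middle component fixes the signs. (Be aware that the displayed identity for the double-primed variables in the paper also tacitly uses $|\lambda|=1$ in the terms involving $\mu$, so you have at least correctly located where the real work lies; but your proposal replaces that implicit step by an explicit claim that is unprovable from the single identity you use.) To salvage your approach you should either adopt the paper's comparison of $z_1+\mu\ov{z_3}$ and $z_3+\mu\ov{z_1}$, or supply the additional constraints above and carry out the (unimodality plus boundary) analysis of $g$ in full.
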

\begin{proof} Suppose we have $[z:w:\mu],[z':w':\mu'] \in \mathsf{J}_+$ with $\bihol([z:w:\mu])=\bihol([z':w':\mu'])$. Again we can assume that $|z|^2+|w|^2=1$ and $|z'|^2+|w'|^2=1$. There exists some $\lambda \in \Co^{*}$ such that 
\[
		(z^2-\mu\ov{w}^2,zw+\ov{z}\ov{w}\mu,w^2-\mu\ov{z}^2)= \lambda^2 (z'^2-\mu'\ov{w'}^2,z'w'+\ov{z'}\ov{w'}\mu',w'^2-\mu'\ov{z'}^2).
\]
Computing the expression $z_2^2-z_1z_3$ on both sides implies $\mu = \lambda^{2(a_1+a_2)} \mu'$.
We set $z''=\lambda^{a_1}z'$, $w''=\lambda^{a_2}w'$, $\mu''=\mu=\lambda^{2(a_1+a_2)} \mu'$ and obtain
\begin{equation}\label{eq:double_prime}
		(z^2-\mu\ov{w}^2,zw+\ov{z}\ov{w}\mu,w^2-\mu\ov{z}^2)= (z''^2-\mu\ov{w''}^2,z''w''+\ov{z''}\ov{w''}\mu,w''^2-\mu\ov{z''}^2).
\end{equation}
Computing the expressions $z_1+\mu\ov{z}_3$ and $z_3+\mu\ov{z}_1$ on both sides yields
\[
		z^2(1-|\mu|^2)=z''^2(1-|\mu|^2), \; \;  w^2(1-|\mu|^2)=w''^2(1-|\mu|^2).
\]
Because of $|\mu|<1$ it follows that $z^2=\lambda^{2a_1}z'^2$ and $w^2=\lambda^{2a_2}w'^2$, and hence $z=\varepsilon_z\lambda^{a_1}z'=\varepsilon_z z''$ and $w=\varepsilon_w\lambda^{a_2}w'=\varepsilon_w w''$ for some $\varepsilon_z,\varepsilon_w \in \{\pm 1\}$. Plugging this into the third component of equation~\eqref{eq:double_prime} we get
\[
	zw+\ov{z}\ov{w}\mu=\varepsilon_z\varepsilon_w (zw+\ov{z}\ov{w}\mu).
\]
If $z\neq 0\neq w$ then the expression on the left hand side is non-trivial because of $|\mu|<1$, and then $\varepsilon_z$ and $\varepsilon_w$ have the same sign. In this case it follows, perhaps after replacing $\lambda$ by $-\lambda$, that $z=\lambda^{a_1}z'$, $w=\lambda^{a_2}w'$ and hence $[z:w:\mu]=[z':w':\mu']$. Otherwise we can draw the same conclusion, again perhaps after replacing $\lambda$ by $-\lambda$.
\end{proof}

\begin{lemma} The map $\bihol:\mathsf{J}_+ \To \Co\mathbb{P}(a_1,(a_1+a_2)/2,a_2) - j(S^2)$ is onto.
\end{lemma}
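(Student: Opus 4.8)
The plan is to prove surjectivity by a connectedness argument: I will show that the image of $\bihol$ is both open and closed in the connected space $\Co\mathbb{P}(a_1,(a_1+a_2)/2,a_2)\setminus j(S^2)$, and nonempty, so that it must be everything. Connectedness of the complement is clear: $\Co\mathbb{P}(a_1,(a_1+a_2)/2,a_2)$ is connected, while $j(S^2)$ is a $2$-dimensional suborbifold of the $4$-dimensional target, hence of real codimension $2$, so deleting it does not disconnect. Openness of the image is immediate from what has already been established: $\bihol$ is an immersion between orbifolds of equal real dimension $4$, hence a local diffeomorphism, hence an open map.

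The heart of the matter is closedness, which I would treat by a compactness-and-boundary argument. Write $F:\SU\times\ov{\D}\To\C^3$ for the map $F(z,w,\mu)=(z^2-\mu\ov{w}^2,\ zw+\ov{z}\ov{w}\mu,\ w^2-\mu\ov{z}^2)$ covering $\bihol$ (restricted to $|z|^2+|w|^2=1$). A short calculation of the same type used in the injectivity lemma shows that $F$ does not vanish anywhere on the compact set $\SU\times\ov{\D}$. Now let $y$ lie in the closure of $\bihol(\mathsf{J}_+)$ inside $\Co\mathbb{P}(a_1,(a_1+a_2)/2,a_2)\setminus j(S^2)$, say $y=\lim_n\bihol([z_n:w_n:\mu_n])$ with $(z_n,w_n)\in\SU$ and $\mu_n\in\D$. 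Passing to a subsequence, $(z_n,w_n,\mu_n)\To(z_\infty,w_\infty,\mu_\infty)\in\SU\times\ov{\D}$, and since $F\neq 0$ the projectivisation survives the limit, giving $y=[F(z_\infty,w_\infty,\mu_\infty)]$. If $|\mu_\infty|<1$ then $[z_\infty:w_\infty:\mu_\infty]\in\mathsf{J}_+$ and $y$ lies in the image, as desired; so everything comes down to excluding $|\mu_\infty|=1$.

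To rule this out I would use the identities
\[
Z_1+\mu\ov{Z}_3=z^2(1-|\mu|^2),\qquad Z_3+\mu\ov{Z}_1=w^2(1-|\mu|^2),\qquad Z_2-\mu\ov{Z}_2=zw(1-|\mu|^2),
\]
valid for $(Z_1,Z_2,Z_3)=F(z,w,\mu)$ with $\mu=Z_2^2-Z_1Z_3$; these are exactly the computations of $Z_1+\mu\ov{Z}_3$ and $Z_3+\mu\ov{Z}_1$ already exploited in the injectivity lemma. When $|\mu|=1$ the right-hand sides vanish, forcing $Z_1=-\mu\ov{Z}_3$, $Z_3=-\mu\ov{Z}_1$ and $Z_2=\mu\ov{Z}_2$. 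Writing $\mu=\e^{\i\psi}$ and choosing $\lambda=\e^{\i\phi}$ with $(a_1+a_2)\phi\equiv\pi-\psi$ modulo $2\pi$, one checks directly that the rescaled representative $\lambda\cdot(Z_1,Z_2,Z_3)=(\lambda^{a_1}Z_1,\lambda^{(a_1+a_2)/2}Z_2,\lambda^{a_2}Z_3)$ has third entry equal to the conjugate of its first and purely imaginary middle entry, i.e.\ is of the form $(s,\i t,\ov{s})=\hat{j}(s,t)$. Hence $y=[F(z_\infty,w_\infty,\mu_\infty)]\in j(S^2)$, contradicting $y\notin j(S^2)$. Therefore $|\mu_\infty|<1$, the image is closed in the complement, and being open, closed and nonempty in a connected set it is all of $\Co\mathbb{P}(a_1,(a_1+a_2)/2,a_2)\setminus j(S^2)$.

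I expect the boundary analysis at $|\mu|=1$ to be the only delicate step: one must both confirm that $F$ stays away from the origin on $\SU\times\ov{\D}$ (so that passing to the projective limit is legitimate) and verify, via the displayed identities and the explicit choice of $\lambda$, that the relations $Z_1=-\mu\ov{Z}_3$ and $Z_2=\mu\ov{Z}_2$ really do place the limit on $j(S^2)$ with the correct weighted normalisation, including the degenerate cases $z_\infty w_\infty=0$.
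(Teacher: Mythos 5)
Your proof is correct, but it takes a genuinely different route from the paper. The paper proves surjectivity ``by hand'': given $[z_1:z_2:z_3]\notin j(S^2)$ it sets $\mu:=z_2^2-z_1z_3$, uses the intermediate value theorem (including a derivative comparison $\mathscr{F}'(1)<\mathscr{G}'(1)$ in the borderline case $z_1=\ov{z_3}$, $|z_1|>1$) to rescale the representative into a normal form, and then explicitly extracts square roots to produce a preimage. You instead run an open--closed--connected argument, which is essentially a rigorous implementation of the alternative route the authors themselves sketch just before the three lemmas (extending $\bihol$ over a compactification of $\mathsf{J}_+$ so that the locus $|\mu|=1$ maps onto $j(S^2)$) but which they chose not to pursue. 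Your key computations all check out: $Z_2^2-Z_1Z_3=\mu(|z|^2+|w|^2)^2$ gives $F\neq 0$ on $\SU\times\ov{\D}$; the identities $Z_1+\mu\ov{Z}_3=z^2(1-|\mu|^2)$, $Z_3+\mu\ov{Z}_1=w^2(1-|\mu|^2)$, $Z_2-\mu\ov{Z}_2=zw(1-|\mu|^2)$ force $Z_1=-\mu\ov{Z}_3$ and $Z_2=\mu\ov{Z}_2$ at $|\mu|=1$; and with your convention $\lambda\cdot(Z_1,Z_2,Z_3)=(\lambda^{a_1}Z_1,\lambda^{(a_1+a_2)/2}Z_2,\lambda^{a_2}Z_3)$ the choice $(a_1+a_2)\phi\equiv\pi-\psi$ does land the limit on $j(S^2)$. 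Two small points you flag as delicate are in fact harmless: the normalisation is automatic, since $|Z_1|^2+|Z_2|^2=(|z|^2+|w|^2)^2=1$ when $|\mu|=1$ (and in any case every point of the form $[s:\i t:\ov{s}]$ lies in $j(S^2)$ after rescaling by a positive real, again by the intermediate value theorem); and openness of the image at the orbifold singular points is best justified by working upstairs with the $\C^*$-equivariant lift $\hat\bihol$, whose Jacobian the paper has already computed to be nonvanishing for $|\mu|<1$. What each approach buys: the paper's construction is self-contained and exhibits an explicit inverse, at the cost of a fiddly case analysis; yours reuses the immersion and disjointness computations, replaces the explicit inversion by compactness, and makes transparent \emph{why} $j(S^2)$ is exactly the complement of the image (it is the image of the boundary $|\mu|=1$), but it leans on the global topology of the target.
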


\begin{proof} Let $(z_1,z_2,z_3)\in \Co^3\backslash\{0\}$ which does not project to $j(S^2)$. We set $\mu:=z_2^2-z_1z_3$. Replacing $(z_1,z_2,z_3)$ by $\lambda (z_1,z_2,z_3)$ for some $\lambda \in \Co^*$ changes $\mu$ to $\lambda^{a_1+a_2} u$. We first want to show that there exists some $\lambda \in \R_{>0}$ so that after replacing $(z_1,z_2,z_3)$ by $\lambda (z_1,z_2,z_3)$ we have $\mu\in (-1,0]$ and
\begin{equation}\label{eq:condition_1}
		|z_1+\ov{z}_3\mu|+|z_3+\ov{z}_1\mu| = 1- |\mu|^2.
\end{equation}
To prove this we consider the cases $\mu=0$ and $\mu\neq 0$ separately.

Let us first assume that $\mu=0$. In this case we need to find some $\lambda \in (0,\infty)$ such that 
\[
		\lambda^{a_1}|z_1|+\lambda^{a_2}|z_3| = 1.
\]
If $|z_1|+|z_3|>0$, this is possible by the intermediate value theorem. Otherwise we have $z_1=z_3=0$ and hence also $z_2=0$ (recall that $\mu=z_2^2-z_1z_3=0$), a contradiction. 

In the case $\mu\neq 0$ we can also assume that $\mu=-1$. So we need to find $\lambda \in (0,1)$ such that
\[
		\mathscr{F}(\lambda):=\lambda^{a_1}|z_1-\lambda^{2a_2}\ov{z}_3|+ \lambda^{a_2}|z_3-\lambda^{2a_1}\ov{z}_1| = 1- \lambda^{2(a_1+a_2)}=:\mathscr{G}(\lambda).
\]
By the intermediate value theorem this is possible if $\mathscr{F}(1)>0$. Otherwise we have $z_1=\ov{z_3}$ and $z_2^2=|z_1|^2-1$. In the case $|z_1|\leq 1$ this implies $z_2=it\in i \R$ and $|z_1|^2+t^2=|z_1|^2-z_2^2=1$ in contradiction to our assumption that $(z_1,z_2,z_3)$ does not project to $j(S^2)$. Therefore we can assume that $|z_1|>1$ and $z_1=\ov{z_3}$, in which case we have $\mathscr{F}(1)=\mathscr{G}(1)=0$. In order to find an appropriate $\lambda$ in this case it is sufficient to show that $\mathscr{F}'(1)<\mathscr{G}'(1)$. A computation shows that $\mathscr{G}'(1)=-2(p+q)$ and $\mathscr{F}'(1)=-|z_1|2(p+q)$. Hence in any case we can assume that $\mu\in [0,1)$ and that~\eqref{eq:condition_1} holds.

Now we can choose $z,w \in \Co$ such that $z^2(1-|\mu|^2)=z_1+\ov{z}_3\mu$ and $w^2(1-|\mu|^2)=z_3+\ov{z}_1\mu$. By construction we have $|z|^2+|w|^2=1$, and $z^2-\ov{w}^2\mu=z_1$ and $w^2-\ov{z}^2\mu=z_3$. Moreover, we see that
\[
		(zw+\ov{z}\ov{w} \mu)^2=(z^2-\ov{w}^2\mu)(w^2-\ov{z}^2\mu)+\mu=z_1z_3+\mu=z_2^2.
\]
Perhaps after using our freedom to change the sign of $z$ we obtain $(zw+\ov{z}\ov{w} \mu)=z_2$, and hence $\bihol([z:w:u])=[z_1:z_2:z_3]$ as desired.
\end{proof}

We have shown that $\bihol$ is a bijective immersion onto the complement of $j(S^2)$ in $\Co\mathbb{P}(a_1,(a_1+a_2)/2,a_2)$. It follows from the local structure of such maps that the inverse is smooth as well. Hence, the map $\bihol$ is a diffeomorphism onto the complement of $j(S^2)$.

In order to complete the proof of \cref{prp:weighted_biho} it remains to verify that the map $\bihol$ is holomorphic. The map $\bihol$ is holomorphic if and only if it pulls back $(1,0)$-forms to $(1,0)$-forms. By definition the $(1,0)$-forms of $\mathsf{J}_+$ pull back to linear combinations of $\xi_1$ and $\xi_2$ on $\SU \times \D$. On the other hand, the $(1,0)$-forms on $\Co\mathbb{P}(a_1,(a_1+a_2)/2,a_2)$ pull back to the $(1,0)$-forms on $\Co^3\backslash \{0\}$ which vanish on the infinitesimal generator of the defining $\C^*$-action on $\Co^3\backslash \{0\}$. The latter are linear combinations of the complex valued $1$-forms
\[
\Pi_1=a_2z_3\d z_1-a_1z_1\d z_3 \quad \text{and} \quad \Pi_2=\left(\frac{a_1+a_2}{2}\right)z_2\d z_1-a_1z_1\d z_2.
\]
Hence, we need to show that $\Psi=\hat{\bihol}|_{\SU\times \D} : \SU \times \D \to \C^{3}\setminus\{0\}$ satisfies
\begin{equation}\label{eq:conbiholom}
\xi_1\wedge \xi_2 \wedge \Psi^*\Pi_1 = 0 \quad \text{and} \quad \xi_1\wedge \xi_2 \wedge \Psi^*\Pi_2 = 0.
\end{equation}
Recall the identities~\eqref{eq:iddz} and~\eqref{eq:iddw}
\begin{align*}
\d z&=-(\ov{w}/\mathscr{U})\omega-\i a_1 z\con,\\
\d w&=(z/\mathscr{U})\omega-\i a_2 w\con. 
\end{align*}
Using these identities a tedious -- but straightforward -- calculation gives
\begin{multline*}
\Psi^*\Pi_1=-\frac{2}{\mathscr{U}}\left(a_1(z^2-\mu\ov{w}^2)\ov{z}w+a_2(w^2-\mu\ov{z}^2)z\ov{w}\right)\xi_1\\
+\left(a_1(z^2-\mu\ov{w}^2)\ov{z}^2-a_2(w^2-\mu\ov{z}^2)\ov{w}^2\right)\xi_2
\end{multline*}
and
\begin{multline*}
\Psi^*\Pi_2=-\frac{1}{\mathscr{U}}\left(a_1(z^3\ov{z}+\mu w\ov{w}^3)+a_2(zw+\mu\ov{zw})z\ov{w}\right)\xi_1\\
+\frac{1}{2}\left(a_1(\mu\ov{z}\ov{w}^2-|w|^2z-2|z|^2z)\ov{w}-a_2(\mu\ov{zw}+zw)\ov{w}^2\right)\xi_2,
\end{multline*}
thus~\eqref{eq:conbiholom} is satisfied and $\Xi$ is a biholomorphism. This finishes the proof of \cref{prp:weighted_biho}.

\subsection{Projective transformations} Let $\mathcal{O}$ be an orbifold equipped with a torsion-free connection $\nabla$ on its tangent bundle. A projective transformation for $(\mathcal{O},\nabla)$ is a diffeomorphism $\Psi : \mathcal{O} \to \mathcal{O}$ which sends geodesics of $\nabla$ to geodesics of $\nabla$ up to parametrisation. In the case where $\mathcal{O}$ is a smooth manifold the group of projective transformations of $\nabla$ is known to be a Lie group (see for instance~\cite{MR1336823}). In our setting, the projective transformations of the Besse orbifold metric on $\mathbb{CP}(a_1,a_2)$ also form a Lie group, since the automorphisms of the associated generalized path geometry form a Lie group, see~\cite{MR3586335} for details. Moreover, a vector field is called projective if its (local) flow consists of projective transformations. Clearly, if $\nabla$ is a Levi-Civita connection for some Riemannian metric $g$, then every Killing vector field for $g$ is a projective vector field. The set of vector fields for $\nabla$ form a Lie algebra given by the solutions of a linear second order PDE system of finite type. In the case of two dimensions and writing a projective vector field as $W=W^1(x,y)\frac{\partial}{\partial x}+W^2(x,y)\frac{\partial}{\partial y}$ for local coordinates $(x,y) : U \to \R^2$ and real-valued functions $W^i$ on $U$, the PDE system is~\cite{MR2368987}
\begin{align}\label{eq:pdeprojvec}
\begin{split}
0&=W^2_{xx}-2R^0W^1_x-R^1W^2_x+R^0W^2_y-R^0_xW^1-R^0_yW^2,\\
0&=-W^1_{xx}+ 2W^2_{xy}-R^1W^1_x-3R^0W^1_y-2R^2W^2_x-R^1_xW^1-R^1_yW^2,\\
0&=-2W^1_{xy}+W^2_{yy}-2R^1W^1_y-3R^3W^2_x-R^2W^2_y-R^2_xW^1-R^2_yW^2,\\
0&=-W^1_{yy}+R^3W^1_x-R^2W^1_y-2R^3W^2_y-R^3_xW^1-R^3_yW^2,
\end{split}
\end{align}
where 
\[
R^0=-\Gamma^2_{11},\quad R^1=\Gamma^1_{11}-2\Gamma^2_{12}, \quad R^2=2\Gamma^1_{12}-\Gamma^2_{22},\quad R^3=\Gamma^1_{22}
\]
and where $\Gamma^i_{jk}$ denote the Christoffel symbols of $\nabla$ with respect to $(x,y)$.

In order to show that the deformations we are going to construct in  \cref{sub:deformations} are nontrivial, we need to know that the identity component of the group of projective transformations of $(\mathbb{CP}(a_1,a_2),g)$ consists solely of isometries. Up to rescaling, any rotationally symmetric Besse metric on $\mathbb{CP}(a_1,a_2)$ is isometric to the metric completion of one of the following examples (see \cite[Section~2.2]{La16} and \cite[Thm.~4.13]{MR496885}): let $h: [-1,1] \To (- \frac{a_1+a_2}{2},\frac{a_1+a_2}{2})$ be a smooth, odd function with $h(1)=\frac{a_1-a_2}{2}=-h(-1)$ and let a Riemannian metric on $(0,\pi)\times ([0,2\pi]/0 \sim 2\pi) \ni (r,\phi)$ be defined by
\begin{equation}\label{eq:metric_revolution2}
			g_h=\left( \frac{a_1+a_2}{2}+h(\cos(r))\right)^2 d\theta^2 + \sin^2(r) d\phi^2.
\end{equation}
Our specific Besse orbifold metric $g$ on $\mathbb{CP}(a_1,a_2)$ takes the form $g_h/4$ with $h(x)=\frac{1}{2}(a_1-a_2)x$ with respect to the parametrization 
\begin{equation}\label{eq:spindlecoord}
[z:w]=\left[\cos(r/2)\e^{-\i\phi/(a_1+a_2)}:\sin(r/2)\e^{\i\phi/(a_1+a_2)}\right] 
\end{equation}
where $(r,\phi) \in (0,\pi)\times([0,2\pi]/0 \sim 2\pi)$.

\begin{lemma}\label{lem:groupprojtrans}
In our setting where $a_1>a_2$ are co-prime and odd the identity component of the group of smooth projective transformations of a rotationally symmetric Besse metric on $\mathbb{CP}(a_1,a_2)$ consists only of isometries. 
\end{lemma}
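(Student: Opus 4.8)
The plan is to determine the Lie algebra $\mathfrak{p}$ of projective vector fields of $(\mathbb{CP}(a_1,a_2),g)$ explicitly and to show that it reduces to the $1$-dimensional algebra of Killing fields. Since $a_1>a_2$, the two orbifold points carry local groups of distinct orders and hence cannot be interchanged by any isometry; together with the rotational symmetry this identifies the identity component of the isometry group with the circle generated by the rotational Killing field $\partial_\phi$, whose Lie algebra is $\mathbb{R}\,\partial_\phi$. As every Killing field is projective we have $\mathbb{R}\,\partial_\phi\subseteq\mathfrak{p}$, and everything comes down to proving the reverse inclusion.

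First I would use the rotational symmetry to decompose $\mathfrak{p}$. The projective fields are the solutions of the linear finite-type system~\eqref{eq:pdeprojvec}, so $\mathfrak{p}$ is finite-dimensional, and the rotation circle acts on it by push-forward. Since this circle is compact, complexifying yields a weight decomposition $\mathfrak{p}\otimes\C=\bigoplus_{k\in\Z}\mathfrak{p}_k$ on which $\mathrm{ad}(\partial_\phi)$ acts by $\mathrm{i}k$. Because $\mathcal{L}_{\partial_\phi}$ acts on components by $\partial_\phi$, a field in $\mathfrak{p}_k$ has the separated form $W=\e^{\mathrm{i}k\phi}\bigl(u(r)\,\partial_r+v(r)\,\partial_\phi\bigr)$ for functions $u,v$ of $r$ alone. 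It therefore suffices to prove $\mathfrak{p}_k=0$ for $k\neq 0$ and $\mathfrak{p}_0=\C\,\partial_\phi$.

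Next I would substitute this ansatz into~\eqref{eq:pdeprojvec}, using the Christoffel symbols of the explicit metric $g_h/4$ with $h(x)=\tfrac12(a_1-a_2)x$. As the metric coefficients depend only on $r$ and the $\phi$-dependence of $W$ is the single factor $\e^{\mathrm{i}k\phi}$, this factor cancels and the system collapses to a linear system of ordinary differential equations for $u(r),v(r)$ on $(0,\pi)$. For $k=0$ the tangential part reproduces $\partial_\phi$, while the decisive point for the radial part is that $h\not\equiv 0$: by \cref{lem:fubinistudymetric} the Gauss curvature $K_g=2(a_1+a_2)/(a_1|z|^2+a_2|w|^2)^3$ is non-constant, which excludes essential projective behaviour and forces every $k=0$ solution to be a multiple of $\partial_\phi$. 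For $k\neq 0$ I would solve the reduced ODEs and impose that $W$ extends to a smooth orbifold vector field at the two poles $r=0,\pi$; the matching conditions at the singular points, whose orders $a_1\neq a_2$ enter asymmetrically, admit only the trivial solution.

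I expect the regularity analysis at the orbifold singularities to be the main obstacle: translating smoothness of $W$ on the spindle into precise boundary conditions for $u,v$ at $r=0,\pi$, and verifying that the asymmetry $a_1\neq a_2$ really annihilates every nonzero Fourier mode. Once this is established, combining $\mathfrak{p}_0=\C\,\partial_\phi$ with $\mathfrak{p}_k=0$ for $k\neq 0$ gives $\mathfrak{p}=\mathbb{R}\,\partial_\phi$, so the identity component of the projective group is the rotation circle and consists of isometries. A shorter but less self-contained alternative would invoke the two-dimensional projective Lichnerowicz--Obata dichotomy: on a complete surface of non-constant curvature every projective field is affine, and on a compact orbifold every affine field is Killing, with non-constant curvature again guaranteed by $a_1\neq a_2$.
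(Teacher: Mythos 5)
Your overall strategy -- show that the Lie algebra $\mathfrak{p}$ of projective vector fields is exactly $\R\,\frac{\partial}{\partial\phi}$ -- is the right target, and the Fourier-mode decomposition under the rotation circle is a legitimate (and genuinely different) way to organise the computation. The paper instead argues geometrically: a projective transformation must fix the two cone points of distinct orders, preserve the equator and (after composing with a rotation) a meridian, act as a homothety at the north pole, and then Clairaut's relation pins it down to an explicit one-parameter candidate family $\tau_\lambda$ generated by $W=\tfrac{1}{2}\sin(2r)\,\frac{\partial}{\partial r}$; one then only has to check that this single vector field fails the system~\eqref{eq:pdeprojvec}. Your route would instead require solving the reduced ODE systems for every weight $k$ and imposing regularity at the cone points -- more computation, but self-contained if carried out.

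The genuine gap is that the decisive step is not carried out but replaced by an appeal that is essentially circular. For the $k=0$ radial mode -- which is exactly where the paper's candidate $W=\tfrac{1}{2}\sin(2r)\,\frac{\partial}{\partial r}$ lives, i.e.\ the only mode that actually threatens the conclusion -- you argue that non-constancy of $K_g$ ``excludes essential projective behaviour.'' That is the projective Lichnerowicz--Obata statement, which is proved for \emph{complete manifolds}; it does not apply here, since $\mathbb{CP}(a_1,a_2)$ with $1\leqslant a_2<a_1$ coprime is a bad (non-developable) orbifold with no manifold cover, and its regular part is incomplete. Establishing the orbifold analogue in this rotationally symmetric situation is precisely the content of the lemma, so it cannot be quoted. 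Similarly, the claims that the $k\neq 0$ modes die by the regularity conditions at $r=0,\pi$, and that the $k=0$ ODE forces $u\equiv 0$, are announced rather than verified -- you yourself flag the boundary analysis as ``the main obstacle.'' To close the argument along your lines you would have to actually write down the reduced ODEs from~\eqref{eq:pdeprojvec} (using the $R^i$ computed in the paper, with $R^0=R^2=0$) and show that the only global smooth solution is a multiple of $\frac{\partial}{\partial\phi}$; alternatively, one can shortcut the $k=0$ radial case exactly as the paper does, by observing that any such field would have to be proportional to $\tfrac{1}{2}\sin(2r)\,\frac{\partial}{\partial r}$ and checking directly that this fails~\eqref{eq:pdeprojvec}.
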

\begin{proof} Since $a_1>a_2\geqslant 1$ every projective transformation $\tau$ fixes the singular point of order $a_1$, the \emph{northpole} ($r=0$), and hence also its antipodal point of order $a_2$, the \emph{southpole} ($r=\pi$). Moreover it leaves the unique exceptional geodesic, the \emph{equator} ($r=\pi/2$), invariant. After composition with an isometry we can assume that $\tau$ fixes a point $x_0$ on the equator. Then $\tau$ also leaves invariant the minimizing geodesic between $x_0$ and the northpole. Because of $a_1>2$ it follows that the differential of $\tau$ at the northpole is a homotethy, i.e. it scales by some factor $\lambda > 0$. Therefore, $\tau$ in fact leaves invariant all geodesics starting at the northpole and consequently fixes the equator pointwise. In particular, the derivative of $\tau$ in the east-west direction along the equator is the identity. We write our Besse orbifold metric in coordinates as in (\cref{eq:metric_revolution2}). Let $x$ be some point on the equator. We can assume that is has coordinates $(r,\phi)=(\pi/2,0)$. We look at regular unit-speed geodesics $\gamma(s)=(r(s),\phi(s))$ with $\phi'(0)>0$ that start at $x$ and do not pass the singular points. Let $r_m$ be the maximal (or minimal) latitude attained by such a geodesic. By \cite[Theorem~4.11]{MR496885} this latitude is attained at a unique value of $s$ during one period. By symmetry and continuity the corresponding $\phi$-coordinate $\phi_m$ is constant as long as $r'(0)$ does not change its sign. According to Clairaut's relation we have $\sin^2(r)\phi'(s)=\sin(r_m)$ along $\gamma$ and the geodesic oscillates between the parallels $r=r_m$ and $r=\pi-r_m$ \cite[p.~101]{MR496885}. Let $\tilde \gamma (s) =(\tilde r(s),\tilde \phi(s))$ be the unit-speed parametrization of the geodesic $\tau(\gamma)$. Suppose the differential of $\tau$ at $x$ scales by a factor of $\lambda'>0$ in the north-south direction. Then we have
\[
   \tilde \phi'(0)= \frac{\phi'(0)}{\sqrt{\lambda'^2+(1-\lambda'^2) \phi'(0)^2}}
\]
and a corresponding relation between $\sin(\tilde r_m)$ and $\sin(r_m)$ by Clairaut's relation. Therefore, $\tau$ maps the curve $c:[0,\pi/2] \ni t \mapsto (t,\phi_m)$ to the curve 
\[
		\tilde c: [0,\pi/2] \ni t \mapsto \arcsin \left(  \frac{\sin(t)}{\sqrt{\lambda'^2+(1-\lambda'^2) \sin(t)^2}} ,\phi_m \right) 
\]
with $\tilde c'(0)=(1/\lambda',0)$. Hence, we have $\lambda=1/\lambda'$. In particular, in our coordinates the differential of $\tau$ looks the same at every point of the equator. It follows that $\tau=\tau_\lambda$ maps the $r$-parallels to the $\tilde r$-parallels, where
\[
		\sin(\tilde r) = \frac{\lambda \sin(r)}{\sqrt{1+(\lambda^2-1) \sin^2(r)}}.
\]
The family of transformations $\tau_{\lambda}$ satisfies $\tau_{\lambda\mu} = \tau_{\lambda}\circ \tau_{\mu}$ and, in our $(r,\phi)$ coordinates, is induced by the vector field 
\[
W=\left.\frac{\d}{\d\lambda}\right|_{\lambda=1} \tau_{\lambda} (r,\phi)=\frac{\sin(2r)}{2}\frac{\partial}{\partial r}.
\]
For our metric the functions $R^i$ are easily computed to be $R^0=R^2=0$ and
\begin{align*}
R^1&=\frac{(a_2-a_1)(\cos^2r+1)-2(a_1+a_2)\cos r}{((a_1-a_2)\cos r+a_1+a_2)\sin r},\\ 
R^3&=\frac{-2\sin(2r)}{((a_1-a_2)\cos r+a_1+a_2)^2}.
\end{align*}
It follows from elementary computations that the vector field $W$ does not solve the PDE system~\eqref{eq:pdeprojvec}. Therefore, the Lie algebra of projective vector fields of $(\mathbb{CP}(a_1,a_2),g)$ is spanned by the Killing vector field $\frac{\partial}{\partial \phi}$. 
\end{proof}


\begin{remark}
Alternatively, it is easy to check that $r(\phi)$-parametrizations of the curves $\tau_\lambda(\gamma)$ do not satisfy the geodesic equations \cite[4.1.12]{MR496885} for all $\lambda>0$, which also implies the claim. Also, a more refined but cumbersome analysis of the geodesic equations seems to show that $\tau_{\lambda}$ is only a projective transformation for $\lambda=1$, so that any projective transformation is in fact an isometry. 
\end{remark}

\begin{remark}
Note that if a connected group of projective transformations acts on a complete connected two-dimensional Riemannian manifold $(M,g)$, then it acts by isometries or $g$ has constant non-negative curvature~\cite{MR2165202} (see also~\cite{MR2301453} for the case of higher dimensions).  
\end{remark}

\subsection{Deformations of Finsler metrics and the Veronese embedding}\label{sub:deformations} Recall from \cref{subsec:twistor} that sections of $\mathsf{J}_+\to \mathbb{CP}(a_1,a_2)$ with holomorphic image correspond to Weyl connections in $[\nabla]$. Moreover, a projective transformation gives rise to a biholomorphism of $\mathsf{J}_+$, and it pulls-back a Weyl connection $\nabla_1\in [\nabla]$ to $\nabla_2 \in [\nabla]$ if and only if the corresponding holomorphic curves are mapped onto each other.

Let us identify $\mathsf{J}_+$ with $\mathbb{CP}(a_1,(a_1+a_2)/2,a_2)\setminus j(S^2)$ via  \cref{prp:weighted_biho} in the case where $\nabla=\nabla^g$ for the Besse orbifold metric $g$ from \cref{lem:fubinistudymetric}. In this case, the complex structure on $\mathbb{CP}(a_1,a_2)$ arising from the chosen orientation and the metric $g$ corresponds to the Veronese embedding
\[
		\begin{array}{cccl}
		 \Theta=\bihol_{|\Co\mathbb{P}(a_1,a_2)}: &\Co\mathbb{P}(a_1,a_2)		& \To  	&	\Co\mathbb{P}(a_1,(a_1+a_2)/2,a_2) \\
		           		& [z:w] 	& \MTo  & [z^2:zw:w^2].
		 \end{array}	
\]
We would like to construct deformations of Finsler metrics via deformations of this embedding. Note that the image of the Veronese embedding is defined by the equation $y^2_2=y_1y_3$, where we use $(y_1,y_2,y_3)$ as coordinates on $\mathbb{CP}(a_1,(a_1+a_2)/2,a_2)$. An explicit complex one-dimensional family of deformations is given by the equation $y_2^2=\lambda y_1y_3$ for some $\lambda \in \C^*$. Choosing $\lambda$ sufficiently close to $1$ will cut out a holomorphic curve which continues to be a section of $\mathsf{J}_+ \to \mathbb{CP}(a_1,a_2)$ and hence corresponds to a positive Weyl connection since the metric $g$ has strictly positive Gauss curvature. Therefore, according to our \cref{thm:duality}, small deformations of the Veronese embedding through holomorphic curves give rise to deformations of the Finsler metric dual to $g$ through Finsler metrics of constant curvature $1$ and all geodesics closed. 

It remains to show that the so obtained Finsler metrics are not all isometric. Again, according to our \cref{thm:duality}, this amounts to showing that the resulting Weyl structures do not coincide up to an orientation preserving diffeomorphism. Let $\mathscr{W}_{\lambda_i}$ for $i=1,2$ denote the Weyl structures corresponding to the deformations by $\lambda_1\neq\lambda_2$ sufficiently close to $1$. Suppose $\Psi : \mathbb{CP}(a_1,a_2) \to \mathbb{CP}(a_1,a_2)$ is an orientation preserving diffeomorphism which identifies $\mathscr{W}_{\lambda_1}$ with $\mathscr{W}_{\lambda_2}$. By construction, the Weyl structures $\mathscr{W}_{\lambda_i}$ have Weyl connections whose geodesics agree with the geodesics of the Besse orbifold metric $g$ up to parametrisation. Therefore, $\Psi$ is a projective transformation for the Levi-Civita connection of $g$ and hence by \cref{lem:groupprojtrans} an isometry for $g$ up to possibly applying a transformation from a discrete set of non-isometric projective transformations. 

Every orientation preserving diffeomorphism of $\mathbb{CP}(a_1,a_2)$ naturally lifts to a diffeomorphism of $\mathsf{J}_+$ and in the case of an orientation preserving isometry $\Upsilon : \mathbb{CP}(a_1,a_2) \to \mathbb{CP}(a_1,a_2)$ the lift $\mathsf{J}_+ \to \mathsf{J}_+$ is covered by a map $\mathrm{SU}(2)\times \mathbb{D} \to \mathrm{SU}(2)\times \mathbb{D}$ which is the product of the identity on the $\mathbb{D}$ factor and the natural lift of $\Upsilon$ to $\mathrm{SU}(2)$ on the first factor. With respect to our coordinates~\eqref{eq:spindlecoord} the isometries generated by the Killing vector field $\frac{\partial}{\partial \phi}$ lift to $\mathrm{SU}(2)$ to become left-multiplication by the element $\e^{\i\vartheta}$. Thus, under our biholomorphism $\Xi : \mathsf{J}_+ \to \mathbb{CP}(a_1,(a_1+a_2)/2,a_2)\setminus j(S^2)$ lifts of orientation preserving isometries to $\mathsf{J}_+$ take the form
\[
[y_1:y_2:y_3] \mapsto [\e^{-2\i\vartheta}y_1:y_2:\e^{2\i\vartheta}y_3]
\]
for $\vartheta \in \R$. Observe that each such transformation leaves each member of the family $y_2^2=\lambda y_1y_3$ invariant. In particular, the deformed Besse--Weyl structures are rotationally symmetric as well. Since $\Psi$ identifies the two Weyl structures, its lift $\tilde{\Psi} : \mathsf{J}_+ \to \mathsf{J}_+$ must map the holomorphic curves cut out by $y_2^2=\lambda_iy_1y_3$ for $i=1,2$ onto each other and hence $\Psi$ must be a member of the discrete set of non-isometric projective transformations. Since we have a real two-dimensional family of deformations of the Veronese embedding, we conclude that we have a corresponding real two-dimensional family of non-isometric, rotationally symmetric Finsler metrics of constant curvature $K=1$ on $S^2$ and with all geodesics closed. 

\begin{remark}
The Besse--Weyl structures arising from the deformations of the Veronese embedding are defined on $\mathbb{CP}(a_1,a_2)$ and hence on the Finsler side yield examples of Finsler $2$-spheres of constant curvature and with shortest closed geodesics of length $2\pi\left(\frac{a_1+a_2}{2a_1}\right)$.
\end{remark}

\begin{remark}\label{rem:projectively_equi_deformations}
To the best of our knowledge no two-dimensional family of deformations of rotationally symmetric Besse metrics on $\mathbb{CP}(a_1,a_2)$, $a_1,a_2>1$, in a fixed projective class is known.
\end{remark}

\appendix

\section{The Biholomorphism for the \texorpdfstring{$2$-Sphere}{2-sphere}}\label{appendix1}

Recall that the Killing form $B$ on $\mathfrak{su}(2)$ is negative definite. Therefore, fixing an isomorphism $(\mathfrak{su}(2),-B)\simeq \mathbb{E}^3$ with Euclidean $3$-space $\mathbb{E}^3$, the adjoint representation of $\SU$ gives a  group homomorphism
\[
\mathrm{Ad} : \SU \to \mathrm{SO}(\mathfrak{su}(2),-B)\simeq \mathrm{SO}(3). 
\]
Explicitly, mapping the $-B$-orthonormal basis of $\mathfrak{su}(2)$ given by
\[
b_1=\frac{\sqrt{2}}{4}\begin{pmatrix}0 & -1 \\ 1 & 0\end{pmatrix}\quad \text{and} \quad b_2=\frac{\sqrt{2}}{4}\begin{pmatrix}0 & \i \\ \i & 0\end{pmatrix}\quad \text{and} \quad b_3=\frac{\sqrt{2}}{4}\begin{pmatrix}-\i & 0 \\ 0 & \i\end{pmatrix}
\]
to the standard basis of $\R^3$, the adjoint representation becomes
\begin{multline*}
\esu \mapsto \\
\frac{1}{2}\begin{pmatrix} \left(z^2+w^2+\ov{z}^2+\ov{w}^2\right) & -\i\left(z^2+w^2-\ov{z}^2-\ov{w}^2\right) & 2\i(z\ov{w}-\ov{z}w) \\ \i\left(z^2-w^2-\ov{z}^2+\ov{w}^2\right) & \left(z^2+\ov{z}^2-w^2-\ov{w}^2\right) & -2(z\ov{w}+\ov{z}w) \\ 2\i(zw-\ov{zw}) & 2(zw+\ov{zw}) & 2(|z|^2-|w|^2)\end{pmatrix}.
\end{multline*}
The unit tangent bundle of the Euclidean $2$-sphere $S^2\subset \mathbb{E}^3$ may be identified with $\mathrm{SO}(3)$ by thinking of the third column vector $e$ of an element $(e_1\;e_2\;e)\in \mathrm{SO}(3)$ as the basepoint $e\in S^2$ and by thinking of the remaining two column vectors $(e_1\;e_2)$ as a positively oriented orthonormal basis of $T_{e}S^2$, where we identify the tangent plane to $e$ with the orthogonal complement $\{e\}^{\perp}$ of $\e$ in $\mathbb{E}^3$. 

We can represent an orientation compatible linear complex structure $J$ on $T_{e}S^2$ by mapping $J$ to the (complex) projectivisation of the vector $v+iJv$, where $v \in T_{e}S^2\simeq \{e\}^{\perp}\subset \R^3$ is any non-zero tangent vector. This defines a map $\mathsf{J}_+(S^2) \to \mathbb{CP}^2$. Since $v$ and $Jv$ are linearly independent, the image of this map is disjoint from $\mathbb{RP}^2$, where we think of $\mathbb{RP}^2$ as sitting inside $\mathbb{CP}^2$ via its standard real linear embedding. An elementary calculation shows that an orientation compatible linear complex structure $J$ on $T_{e}S^2$ with Beltrami coefficient $\mu \in \mathbb{D}$ relative to the standard complex structure $J_0$ has matrix representation
\[
J=\frac{1}{1-|\mu|^2}\begin{pmatrix}-2 \Im(\mu) & -|\mu|^2+2\Re(\mu)-1 \\  |\mu|^2+2\Re(\mu)+1 & 2\Im(\mu)\end{pmatrix}
\]
with respect to the basis $\{e_1,J_0e_1\}$ of $T_{e}S^2$. Hence, we obtain a map 
\begin{align*}
\mathrm{SO}(3) \times \mathbb{D} &\to \mathbb{CP}^2\setminus \mathbb{RP}^2 \\
\left[\left(e_1,e_2,e\right),\mu\right] &\mapsto \left[e_1+\frac{\i}{1-|\mu|^2}\left(-2\Im(\mu)e_1+(|\mu|^2+2\Re(\mu)+1)e_2\right)\right],
\end{align*}
where we have used that the standard complex structure $J_0$ of $S^2$ satisfies $e_2=J_0e_1$ for all $(e_1\;e_2\;e) \in \mathrm{SO}(3)$. Composing with the adjoint representation this becomes the map $\mathrm{SU}(2) \times \mathbb{D} \to \mathbb{CP}^2\setminus \mathbb{RP}^2$ defined by  
\begin{multline*}
\left[\left(z,w\right),\mu\right] \mapsto \\ \left[z^2+w^2-\mu(\ov{z}^2+\ov{w}^2):\i\left(z^2-w^2+\mu(\ov{z}^2-\ov{w}^2)\right):2\i(zw+\mu\ov{zw})\right]. 
\end{multline*}
A linear coordinate transformation 
\[
\C^3\to \C^3, \quad (z_1,z_2,z_3) \mapsto (\i z_1+ z_2,z_3,\i z_1-z_2)
\]
thus gives the map
\[
\SU\times \mathbb{D} \to \mathbb{CP}^2, \quad [(z,w),\mu] \mapsto [z^2-\mu \ov{w}^2 : zw+\mu\ov{zw}:w^2-\mu\ov{z}^2]
\]
used in \cref{subsec:twistorembedding}. 

\providecommand{\noopsort}[1]{}
\providecommand{\mr}[1]{\href{http://www.ams.org/mathscinet-getitem?mr=#1}{MR~#1}}
\providecommand{\zbl}[1]{\href{http://www.zentralblatt-math.org/zmath/en/search/?q=an:#1}{zbM~#1}}
\providecommand{\arxiv}[1]{\href{http://www.arxiv.org/abs/#1}{arXiv:#1}}
\providecommand{\doi}[1]{\href{http://dx.doi.org/#1}{DOI}}
\providecommand{\MR}{\relax\ifhmode\unskip\space\fi MR }
\providecommand{\MRhref}[2]{%
  \href{http://www.ams.org/mathscinet-getitem?mr=#1}{#2}
}
\providecommand{\href}[2]{#2}

\end{document}